\documentclass{article}

\usepackage[style=trad-plain,backref=true,url=false,isbn=false,alldates=year,sortcites=true,maxnames=9]{biblatex}
\addbibresource{frametheory.bib}
\renewbibmacro{in:}{%
  \ifentrytype{article}{}{\printtext{\bibstring{in}\intitlepunct}}}

\usepackage{mathptmx}
\usepackage{amssymb,amsmath,amsthm,amsfonts}
\usepackage{graphicx}
\usepackage{bm}
\usepackage{xcolor}
\usepackage{mathrsfs}
\usepackage[shortlabels]{enumitem}
\usepackage{caption}

\usepackage{MnSymbol}
\definecolor{linkblue}{HTML}{003d73}
\definecolor{linkgreen}{HTML}{006161}
\definecolor{linkred}{HTML}{a11950}
\usepackage{hyperref}
\hypersetup{
	pdftitle={Optimization and the Topology of Spaces of Parseval Frames},
	pdfauthor={Anthony Caine, Tom Needham, and Clayton Shonkwiler},
	pdfsubject={frame theory},
	pdfkeywords={Parseval frames, full spark, optimization, frame homotopy},
	colorlinks=true,
	linkcolor=linkblue,
	citecolor=linkgreen,
	urlcolor=linkred
}
\usepackage{pifont}
\usepackage[margin=1in]{geometry}
\usepackage{mathtools}
\usepackage{booktabs}
\usepackage{authblk}
\usepackage{mathdots}

\usepackage{enumitem}

\usepackage[nameinlink,capitalize]{cleveref}

\newtheorem{thm}{Theorem}[section]
\newtheorem{lem}[thm]{Lemma}
\newtheorem{prop}[thm]{Proposition}
\newtheorem{cor}[thm]{Corollary}
\theoremstyle{definition}
\newtheorem{definition}[thm]{Definition}

\newtheorem{remark}[thm]{Remark}
\newtheorem{terminology}[thm]{Terminology}
\newtheorem{conditions}[thm]{Conditions}

\AddToHook{env/lem/begin}{\crefalias{thm}{lem}}
\AddToHook{env/prop/begin}{\crefalias{thm}{prop}}
\AddToHook{env/cor/begin}{\crefalias{thm}{cor}}
\AddToHook{env/definition/begin}{\crefalias{thm}{definition}}
\AddToHook{env/conj/begin}{\crefalias{thm}{conj}}
\AddToHook{env/remark/begin}{\crefalias{thm}{remark}}
\AddToHook{env/terminology/begin}{\crefalias{thm}{terminology}}
\AddToHook{env/conditions/begin}{\crefalias{thm}{conditions}}

\crefname{thm}{theorem}{theorems}
\crefname{lem}{lemma}{lemmas}
\crefname{prop}{proposition}{propositions}
\crefname{cor}{corollary}{corollaries}
\crefname{definition}{definition}{definitions}
\crefname{conj}{conjecture}{conjectures}
\crefname{remark}{remark}{remarks}
\crefname{terminology}{terminology}{terminologies}
\crefname{conditions}{conditions}{conditions}

\makeatletter
\renewcommand*\env@matrix[1][*\c@MaxMatrixCols c]{%
  \hskip -\arraycolsep
  \let\@ifnextchar\new@ifnextchar
  \array{#1}}
\makeatother

\newcommand{\R}{\mathbb{R}}
\newcommand{\Z}{\mathbb{Z}}
\newcommand{\N}{\mathbb{N}}
\newcommand{\Q}{\mathbb{Q}}

\newcommand{\C}{\mathbb{C}}

\newcommand{\K}{\mathbb{K}}

\newcommand{\Id}{\mathbb{I}}

\newcommand{\tr}{\operatorname{tr}}
\newcommand{\diag}{\operatorname{diag}}
\newcommand{\PF}{\operatorname{PF}}
\newcommand{\Sr}{\mathscr{S}_{\vec{r}}}
\newcommand{\lcm}{\operatorname{lcm}}

\newcommand{\potential}[1]{%
    {\def\tmp{#1}
    \ifx\tmp\empty
        {E_{\vec{r}}}
    \else
        {E_{#1}}
    \fi}}
	
\newcommand{\unitary}{\operatorname{U}}
\newcommand{\GL}{\operatorname{GL}}
\newcommand{\SL}{\operatorname{SL}}

\newcommand{\SO}{\operatorname{SO}}
\newcommand{\orthog}{\operatorname{O}}


\makeatletter
\newcommand{\subalign}[1]{%
  \vcenter{%
    \Let@ \restore@math@cr \default@tag
    \baselineskip\fontdimen10 \scriptfont\tw@
    \advance\baselineskip\fontdimen12 \scriptfont\tw@
    \lineskip\thr@@\fontdimen8 \scriptfont\thr@@
    \lineskiplimit\lineskip
    \ialign{\hfil$\m@th\scriptstyle##$&$\m@th\scriptstyle{}##$\hfil\Krcr
      #1\Krcr
    }%
  }%
}
\makeatother


\DefineBibliographyStrings{english}{%
  backrefpage = {$\uparrow$},
  backrefpages = {$\uparrow$},
  page = {p\adddot},
  pages = {pp\adddot},
}

\DeclareSourcemap{
  \maps{
    \map{
      \step[fieldset=pagetotal, null]
	  \step[fieldset=pubstate, null]
    }
  }
}

\renewbibmacro*{doi+eprint+url}{%
 \iftoggle{bbx:doi}
   {\printfield{doi}}
   {}%
 \newunit\newblock
 \iftoggle{bbx:eprint}
   {%
     \iffieldundef{doi}%
       {\usebibmacro{eprint}}%
       {}}
   {}}

\DeclareFieldFormat{eprint:urn}{%
  \mkbibacro{URN}\addcolon\space
  \ifhyperref
    {\href{https://nbn-resolving.org/urn:#1}{\nolinkurl{#1}}}
    {\nolinkurl{#1}}}
\DeclareFieldAlias{eprint:URN}{eprint:urn}
\DeclareFieldFormat{eprint:hal}{%
  \mkbibacro{HAL}\addcolon\space
  \ifhyperref
    {\href{https://hal.science/#1}{\nolinkurl{#1}}}
    {\nolinkurl{#1}}}
\DeclareFieldAlias{eprint:HAL}{eprint:hal}
\DeclareFieldFormat{eprint:numdam}{%
  Numdam\addcolon\space
  \ifhyperref
    {\href{http://www.numdam.org/item/#1}{\nolinkurl{#1}}}
    {\nolinkurl{#1}}}
\DeclareFieldAlias{eprint:Numdam}{eprint:numdam}
\DeclareFieldFormat{eprint:ark}{%
  \mkbibacro{ARK}\addcolon\space
  \ifhyperref
    {\href{https://n2t.net/ark:#1}{\nolinkurl{#1}}}
    {\nolinkurl{#1}}}
\DeclareFieldAlias{eprint:ARK}{eprint:ark}
\DeclareFieldFormat{eprint:zbl}{%
  Zbl\addcolon\space
  \ifhyperref
    {\href{https://zbmath.org/#1}{\nolinkurl{#1}}}
    {\nolinkurl{#1}}}
\DeclareFieldAlias{eprint:Zbl}{eprint:zbl}
\DeclareFieldFormat{eprint:mr}{%
  \mkbibacro{MR}\addcolon\space
  \ifhyperref
    {\href{https://mathscinet.ams.org/mathscinet-getitem?mr=#1}{\nolinkurl{#1}}}
    {\nolinkurl{#1}}}
\DeclareFieldAlias{eprint:MR}{eprint:mr}


\title{Optimization and the Topology of Spaces of Parseval Frames}

\author[$\ast$]{Anthony Caine}
\author[$\dag$]{Tom Needham}
\author[$\ddag$]{Clayton Shonkwiler}
\affil[$\ast$]{School of Applied Sciences and Arts, Arizona State University, Phoenix, AZ, USA}
\affil[$\dag$]{Department of Mathematics, Florida State University, Tallahassee, FL, USA} 
\affil[$\ddag$]{Department of Mathematics, Colorado State University, Fort Collins, CO, USA}
\date{}

\begin{document}

\maketitle

\begin{abstract}
A Parseval frame is a spanning set for a Hilbert space which satisfies the Parseval identity: a vector can be expressed as a linear combination of the frame whose coefficients are inner products with the frame vectors. There is considerable interest within the signal processing community in the structural properties of the space of finite-dimensional Parseval frames whose vectors all have the same norm, or which  satisfy more general prescribed norm constraints. In this paper, we introduce a function on the space of arbitrary spanning sets that jointly measures the failure of a  spanning set to satisfy both the Parseval identity and given norm constraints. We show that, despite its nonconvexity, this function has no spurious local minimizers, thereby extending the  Benedetto–Fickus theorem to this non-compact setting. In particular, this shows that gradient descent converges to an equal norm Parseval frame when initialized within a dense open set in the associated matrix space. We then apply this result to study the topology of frame spaces.  Using our Benedetto–Fickus-type result, we realize spaces of Parseval frames with prescribed norms as deformation retracts of simpler spaces, leading to explicit conditions which guarantee the vanishing of their homotopy groups.  These conditions yield new path-connectedness results for spaces of real Parseval frames, generalizing the Frame Homotopy Theorem, which has seen significant interest in recent years.
\end{abstract}

\section{Introduction}

A \emph{Parseval frame} for $\mathbb{K}^d$ (with $\mathbb{K} = \mathbb{R}$ or $\mathbb{C}$) is a spanning set of vectors $\{f_1,\ldots,f_n\}$ with the property that, when representing a vector $v \in \mathbb{K}^d$ as a linear combination with respect to this collection, the coefficients can be taken to be (standard) inner products:
\begin{equation}\label{eqn:parseval_property}
v = \sum_{i=1}^n \langle v, f_i \rangle f_i.
\end{equation}
This property (known as the \emph{Parseval identity}) obviously holds for orthonormal bases, so that Parseval frames generalize the notion of orthonormal bases to allow for $n > d$; indeed, a Parseval frame is just an orthonormal basis when $n=d$.

The generalization from orthonormal bases to ``overcomplete" spanning sets is motivated by the desire for robust representations of signals: considering $\mathbb{K}^d$ as a (finite-dimensional approximation of a) space of signals, one can model measurements of a signal $v \in \mathbb{K}^d$ as inner products with the frame vectors, $\langle v, f_i \rangle$. Property \eqref{eqn:parseval_property} then intuitively says that the process of reconstructing a signal from its measurements is entirely straightforward. Of course, \eqref{eqn:parseval_property} holds if the frame is an orthonormal basis (i.e., if $n=d$), but Parseval frames with $n \gg d$ offer more accurate reconstruction when measurements are contaminated with noise or when some are lost in transmission~\cite{goyalQuantizedFrameExpansions2001,casazzaEqualnormTightFrames2003,holmesOptimalFramesErasures2004}. This robustness is an important byproduct of the inherent redundancy in the measurement acquisition process. 

One frequently considers Parseval frames with the additional constraint that the frame vectors all have the same norm. Such frames are referred to as \emph{equal norm Parseval frames}. This is a natural restriction both from a practical perspective—this provides uniformity in the ``power" of the respective measurements—and from a mathematical one—for instance, the space of equal norm frames is compact. Properties of Parseval frames are a prominent topic within the larger field of \emph{frame theory}; see~\cite{waldronIntroductionFiniteTight2018} for a recent textbook on the topic.

Fixing dimension parameters $d$ and $n$, the collection of equal norm Parseval frames with this data forms a complicated algebraic variety~\cite{dykemaManifoldStructureSpaces2006,strawnFiniteFrameVarieties2010}. The geometric and topological properties of this space have been thoroughly studied~\cite{caineNormalizingParsevalFrames2024,needhamSymplecticGeometryConnectivity2021,needhamToricSymplecticGeometry2022,cahillAlgebraicGeometryFinite2013,cahillConnectivityIrreducibilityAlgebraic2017,mareConnectivityPropertiesSchur2024}, frequently with a view toward associated optimization problems. This paper extends the study of the space of equal norm Parseval frames (and related spaces) through techniques of Geometric Invariant Theory (GIT)~\cite{mumfordGeometricInvariantTheory1994,thomasNotesGITSymplectic2005}, which have been shown to be effective in this domain in the authors' previous work~\cite{mixonThreeProofsBenedetto2023,needhamFusionFrameHomotopy2023,needhamGeometricApproachesMatrix2024}. We now explain more details about our main contributions and give an outline of the paper.

\paragraph{Extension of the Benedetto–Fickus theorem (\Cref{thm:main}).}

Consider the problem of constructing an equal norm Parseval frame: this is a nontrivial task, due to the complicated structure of the space of frames, an algebraic variety of high codimension in the ambient matrix space. One natural idea would be to start with a collection of equal norm vectors (easy to generate, uniformly at random) and to then evolve them towards a Parseval frame while maintaining the equal norm condition. This could be accomplished, say, via gradient descent with respect to some loss function measuring the failure of property  \eqref{eqn:parseval_property} for a given collection. An immediate concern about the viability of this plan is that the gradient descent process could converge to a spurious local minimum which is not actually a Parseval frame. This is addressed by making a principled choice of loss function, as is shown in the celebrated \emph{Benedetto–Fickus Theorem}~\cite{benedettoFiniteNormalizedTight2003}, which serves as a main inspiration for the present paper. Roughly, ~\cite{benedettoFiniteNormalizedTight2003} introduces a certain loss function (the \emph{frame potential}) on the space of equal norm frames whose global minima are equal norm Parseval frames, and then shows that this loss function has no spurious local minima. This result has been generalized, refined, and extended in several recent articles~\cite{mixonThreeProofsBenedetto2023,
caineNormalizingParsevalFrames2024,
needhamFusionFrameHomotopy2023,
needhamGeometricApproachesMatrix2024}.

In this paper, we define a loss function on $\mathbb{K}^{d \times n}$ (a point in which is identified with a collection of $n$ vectors in $\mathbb{K}^d$) that measures, roughly, how far an arbitrary collection of vectors is from being an equal norm Parseval frame. Although this loss function is non-convex, we show in \Cref{thm:main} and \Cref{cor:no spurious local mins} that it has no spurious local minimizers. Our result, in particular, implies that one can construct an equal norm Parseval frame by starting from a random collection of vectors and gradient descending via the loss function. This extension of the Benedetto–Fickus theorem is complicated by the lack of compactness in the domain of the loss function, and requires a different proof technique, based on ideas from GIT. In fact, our result applies more generally to the construction of Parseval frames with arbitrary prescribed norms, as was previously studied in~\cite{cahillConstructingFiniteFrames2013}. We note that the construction of $d \times n$ matrices with constant spectrum and prescribed column norms---which can be achieved by gradient descent, according to our result---has important applications in wireless communication~\cite{viswanathOptimalSequencesSum1999,troppFinitestepAlgorithmsConstructing2004}.

\paragraph{Topology of frame spaces (\Cref{thm:homotopy_groups}).} We apply our Benedetto–Fickus-type theorem to describe the topology of spaces of Parseval frames with  prescribed norms; in particular, we use \Cref{thm:main} to realize frame spaces as deformation retracts of spaces whose homotopy groups are easier to understand. \Cref{thm:homotopy_groups} gives conditions (depending on coefficient field, dimension, number of frames, and choice of prescribed norms) which guarantee that the homotopy groups (of various dimensions) of these frame spaces vanish. 

Our results expand on a series of recent results in the literature stemming from Larson's \emph{Frame Homotopy Conjecture}~\cite{dykemaManifoldStructureSpaces2006}, which states that the space of equal norm Parseval frames is path-connected. This conjecture was resolved by Cahill, Mixon and Strawn in~\cite{cahillConnectivityIrreducibilityAlgebraic2017}, and there have since been several subsequent generalizations and extensions~\cite{needhamFusionFrameHomotopy2023, needhamSymplecticGeometryConnectivity2021,mareConnectivityPropertiesSchur2024,needhamAdmissibilityFrameHomotopy2022}. \Cref{thm:homotopy_groups} and its corollaries give new examples of connected frame spaces over $\R$ (\Cref{cor:real frame homotopy}) and the first (to our knowledge) general results about the higher-dimensional topology of frame spaces (\Cref{cor:vanishing_topology}). We note that our results may be of more general interest: the frame spaces we consider arise in pure mathematical contexts as moduli spaces of polygons~\cite{farberHomologyPlanarPolygon2007,farberTopologySpacesPolygons2012,kapovichModuliSpacePolygons1995} and, in the complex case, as level sets of momentum maps of Hamiltonian actions on Grassmann and flag manifolds~\cite{mareConnectivityPropertiesSchur2024,needhamSymplecticGeometryConnectivity2021}; moreover, the techniques we use to demonstrate vanishing homotopy groups may be more broadly applicable to other spaces.   

\paragraph{Outline of the paper.} \Cref{sec:potential_function} introduces a family of functions, parameterized by collections of admissible vector norms, whose minimization will be shown to produce Parseval frames with these prescribed norms. Each function in this family is referred to as a \emph{total frame energy}. We give necessary background and state our main theorem, \Cref{thm:main}. The proof of the theorem follows the general strategy introduced and exploited in our previous work~\cite{mixonThreeProofsBenedetto2023,needhamFusionFrameHomotopy2023,needhamGeometricApproachesMatrix2024,caineNormalizingParsevalFrames2024}, which is based on establishing a few claims about the structure of the total frame energy; these claims are then proved in \Cref{sec:proof_of_main_theorem}. Topological applications of \Cref{thm:main} are explored in \Cref{sec:topology}, culminating in our results on the vanishing of homotopy groups of the Parseval frame spaces, \Cref{thm:homotopy_groups} and its corollaries. The paper concludes with \Cref{sec:discussion}, a short discussion section which includes some potential future directions.

\section{The Total Frame Energy}\label{sec:potential_function}

In this section, we introduce a function on the space of arbitrary collections of vectors whose global minima are equal norm Parseval frames, and formulate the main result of the paper. We begin by fixing notation and recording some basic results of frame theory.

\subsection{Background and Notation}
\label{sec:background}

Let $\K = \R$ or $\C$ and let $d$ be a positive integer. A \emph{frame} for $\K^d$ is simply a spanning set of vectors $\{f_1,\ldots,f_n\}$, for some $n \geq d$. Let $\K^{d \times n}$ denote the space of $d \times n$ matrices. If a matrix $F = \begin{bmatrix} f_1 & f_2 & \dots & f_n\end{bmatrix} \in \K^{d \times n}$ has full rank, then the column vectors $f_1, \dots , f_n$ for $F$ define a frame for $\K^d$. Based on this correspondence, we identify the space of $n$-frames for $\mathbb{K}^d$ with the space of full-rank matrices in $\mathbb{K}^{d \times n}$. 

A matrix $F \in \mathbb{K}^{d\times n}$ is a \emph{Parseval frame} if $FF^\ast$ is the $d \times d$ identity matrix. Here, $F^\ast$ denotes the adjoint of $F$ (i.e., transpose or conjugate transpose, depending on $\mathbb{K}$). The matrix $FF^\ast$ is commonly referred to as the \emph{frame operator} of a frame $F$. We note that this definition agrees with the characterization \eqref{eqn:parseval_property} given in the introduction. 

We are interested in Parseval frames with prescribed frame vector norms, so we introduce some notation. If $\vec{r} = (r_1, \dots , r_n) \in \R_+^n$ is an $n$-tuple of positive numbers, let $\PF_d(\vec{r})$ be the collection of $d \times n$ Parseval frames $F$ with $\|f_i\|^2 = r_i$ for all $i=1, \dots , n$. In general there may not be any such frames, but we can characterize precisely when $\PF_d(\vec{r})$ is non-empty. Let $r_{(1)} \geq \dots \geq r_{(n)} > 0$ be the reverse-ordering of $(r_1, \dots, r_n)$; that is, $r_{(k)}$ is the $k$th largest of the $r_i$'s. 

\begin{definition}\label{def:admissibility}
	Suppose $\vec{r} \in \R_+^n$ and $\vec{\lambda} \in \R_+^d$. Say that $\vec{r}$ is \emph{$\vec{\lambda}$-admissible} if
	\[
		\sum_{i=1}^n r_i = \sum_{i=1}^d \lambda_i
	\]
	and
	\begin{equation}\label{eq:admissibility}
		\sum_{i=1}^k r_{(i)} \leq \sum_{i=1}^k \lambda_i \quad \text{for all $1 \leq k \leq d-1$}.
	\end{equation}
\end{definition}

In the language of~\cite{marshallInequalitiesTheoryMajorization2011}, the partial sum conditions in \eqref{eq:admissibility} say that $\vec{\lambda}$ \emph{majorizes} $\vec{r}$. As an easy consequence of the Schur–Horn theorem~\cite{hornDoublyStochasticMatrices1954,schurUberKlasseMittelbildungen1923} (see also~\cite{casazzaExistenceConstructionFinite2010,antezanaSchurHornTheorem2007,cahillConstructingFiniteFrames2013}), we have the following characterization of when Parseval frames of prescribed norms exist:

\begin{prop}\label{prop:admissibility}
	Suppose $\vec{r} \in \R_+^n$. Then $\PF_d(\vec{r})$ is non-empty precisely when $\vec{r}$ is $(\overbrace{1, \dots , 1}^d)$-admissible; that is, when
	\begin{equation}\label{eq:1-admissibility}
			\sum_{i=1}^n r_i = d \qquad \text{and} \qquad \sum_{i=1}^k r_{(i)} \leq k \quad \text{for all $1 \leq k \leq d-1$}.
	\end{equation}
\end{prop}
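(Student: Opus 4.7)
The plan is to reduce the statement to the Schur–Horn theorem by passing between a frame and its Gram matrix. More precisely, I would first show that $F \in \PF_d(\vec{r})$ if and only if the Gram matrix $G = F^\ast F \in \K^{n \times n}$ is a positive semidefinite Hermitian matrix whose spectrum consists of the eigenvalue $1$ with multiplicity $d$ and the eigenvalue $0$ with multiplicity $n-d$, and whose diagonal is $(r_1,\dots,r_n)$. The forward direction is immediate: $FF^\ast = I_d$ together with the fact that $FF^\ast$ and $F^\ast F$ share nonzero spectra forces $G$ to have the stated eigenvalues, while $G_{ii} = \|f_i\|^2 = r_i$ gives the diagonal. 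Conversely, any such $G$ factors as $G = F^\ast F$ for some $F \in \K^{d \times n}$ via its spectral decomposition, and then $FF^\ast$ is a $d \times d$ Hermitian matrix whose only eigenvalue is $1$, hence $FF^\ast = I_d$.

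With this translation in hand, the existence of an element of $\PF_d(\vec{r})$ becomes the question of existence of a Hermitian matrix with prescribed eigenvalues $\mu = (\underbrace{1,\dots,1}_{d},0,\dots,0) \in \R^n$ and prescribed diagonal $(r_1,\dots,r_n)$. The Schur–Horn theorem (in the form of Horn's converse) resolves this: such a matrix exists if and only if the decreasing rearrangement $(r_{(1)},\dots,r_{(n)})$ is majorized by $\mu$. Unpacking this majorization yields the trace equality $\sum_{i=1}^n r_i = \sum_{i=1}^n \mu_i = d$ together with the partial sum inequalities $\sum_{i=1}^k r_{(i)} \leq \sum_{i=1}^k \mu_i = \min(k,d)$ for $1 \leq k \leq n-1$.

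To finish, I would observe that these partial sum inequalities simplify to exactly the condition of $(1,\dots,1)$-admissibility. For $1 \leq k \leq d-1$ they read $\sum_{i=1}^k r_{(i)} \leq k$, matching \Cref{def:admissibility}. For $k \geq d$, the bound $\sum_{i=1}^k r_{(i)} \leq d$ is automatic from $r_i > 0$ and the trace equality, since $\sum_{i=1}^k r_{(i)} \leq \sum_{i=1}^n r_i = d$. This establishes the claimed equivalence. There is no real obstacle: the content lies entirely in Schur–Horn, and the argument is essentially bookkeeping. The only subtlety to note is that Horn's construction produces a real symmetric realization, so one obtains $F \in \R^{d \times n}$ in the real case and $F \in \C^{d \times n}$ in the complex case, covering both instances of $\K$ uniformly.
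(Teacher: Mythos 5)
Your proposal is correct and follows essentially the same route the paper intends: the paper gives no detailed argument, simply citing the Schur--Horn theorem, and your Gram-matrix translation (spectrum $(1,\dots,1,0,\dots,0)$ with prescribed diagonal, plus Horn's converse yielding a real symmetric realization in the real case) is precisely the standard way of making that citation precise. The bookkeeping reducing the majorization inequalities for $k \geq d$ to the trace equality is also handled correctly.
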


\begin{terminology}
    For the remainder of this paper, we will only be interested in $\vec{\lambda}$-admissibility when $\vec{\lambda} = (1, \dots , 1)$. Therefore, we will use \emph{admissible} as a synonym for \emph{$(1,\dots, 1)$-admissible}. In other words, $\vec{r}$ will be admissible precisely when $\vec{r}$ satisfies~\eqref{eq:1-admissibility}.
\end{terminology}

\subsection{Defining the Total Frame Energy}

As was described in the introduction, the main result of the paper shows that Parseval frames with presecribed vector norms can be constructed via gradient descent for a particular objective function. We now give a precise definition of this function. 

\begin{definition}\label{def:potential}
    Given a vector $\vec{r} \in \R^n_+$, we define the \emph{total frame energy} $\potential{} : \K^{d \times n} \to \R$ by
    \[
	   \potential{}(F) \coloneqq \|FF^\ast - \Id_d\|_{\text{Fr}}^2 + \frac{1}{4} \sum_{i=1}^n \left(\frac{\|f_i\|^2}{r_i} - 1\right)^2,
    \]
    where $\Id_d$ is the $d \times d$ identity matrix and $\| \cdot \|_{\text{Fr}}$ is the Frobenius norm.
\end{definition}

\begin{remark}\label{rem:squared norm}
    The definition of $\potential{}$ may seem arbitrary, but it is very natural from the perspective of symplectic geometry. Specifically, there is a Hamiltonian action of $\unitary(d) \times \unitary(1)^n$ on $\C^{d \times n}$ defined as in \eqref{eq:action} below and $\potential{}$ is the squared norm of a momentum map for this action. Our motivation for defining $\potential{}$ in this way is heavily inspired by Kirwan's pioneering work on such maps~\cite{kirwanCohomologyQuotientsSymplectic1984} (see also Lerman's survey~\cite{lermanGradientFlowNorm2005}), though our goal is to make our arguments as elementary as possible, so we mostly avoid appealing to this formalism. See our previous work~\cite{ehlerPaulsenProblemMade2018,needhamToricSymplecticGeometry2022,needhamFusionFrameHomotopy2023} for more on momentum maps arising in frame theory.
\end{remark}

\begin{remark}
    When $F$ is an equal-norm frame (i.e., there is some $r > 0$ so that $\|f_i\|^2 = r$ for all $i=1, \dots , n$), then 
    \[
        \potential{(r,\dots , r)}(F) = \|FF^\ast - \Id_d\|_{\text{Fr}}^2 = \|F^\ast F\|_{\text{Fr}}^2 - 2nr+d
    \]
    agrees, up to an additive constant, with the \textit{frame potential} $F \mapsto \|F^\ast F\|_{\text{Fr}}^2$ defined by Benedetto and Fickus~\cite{benedettoFiniteNormalizedTight2003}.

    Alternatively, when $F$ is a Parseval frame, 
    \[
        \potential{\left(\frac{d}{n}, \dots , \frac{d}{n}\right)}(F) = \frac{1}{4} \sum_{i=1}^n \left(\frac{n}{d} \|f_i\|^2 - 1\right)^2 = \frac{1}{4} \frac{n^2}{d^2} \sum_{i=1}^n\|f_i\|^4 - \frac{n}{4}
    \]
    agrees, up to additive and multiplicative constants, with Bodmann and Casazza's \emph{frame energy} 
    \[
    F \mapsto \sum_{j,k}\left(\|f_j\|^2-\|f_k\|^2\right)^2,
    \]
    which they showed~\cite[Proposition~3.12]{bodmannRoadEqualnormParseval2010} is equal to $2n \sum_{i=1}^n \|f_i\|^4 -2d^2$.

    In other words, the total frame energy $\potential{}$ contains elements of both the frame potential and the frame energy, so it is also reasonably natural from a frame theory perspective.
\end{remark}

Elements of $\PF_d(\vec{r})$ are precisely the zeros of $\potential{}$, so \Cref{prop:admissibility} implies:

\begin{cor}\label{cor:global minima}
	If $\vec{r} \in \R_+^n$ is admissible, then the elements of $\PF_d(\vec{r})$ are the global minima of $\potential{}$.
\end{cor}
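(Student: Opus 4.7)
The plan is essentially to observe that the proof follows by a two-step unpacking of the definition of $\potential{}$ together with a nonemptiness argument supplied by \Cref{prop:admissibility}. First I would note that $\potential{}$ is manifestly nonnegative on all of $\K^{d\times n}$: the first summand $\|FF^\ast - \Id_d\|_{\text{Fr}}^2$ is the squared Frobenius norm of a matrix, hence nonnegative, and the second summand $\tfrac14\sum_i(\|f_i\|^2/r_i - 1)^2$ is a sum of squares of real numbers. Thus $\inf_F \potential{}(F) \geq 0$, so any $F$ achieving $\potential{}(F) = 0$ is automatically a global minimizer.

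Next I would characterize the zero set of $\potential{}$. Because it is a sum of two nonnegative quantities, $\potential{}(F) = 0$ if and only if each summand vanishes separately. The first vanishes precisely when $FF^\ast = \Id_d$, i.e., when $F$ is a Parseval frame, and the second vanishes precisely when $\|f_i\|^2 = r_i$ for every $i$. Combining these, the zero set of $\potential{}$ is exactly $\PF_d(\vec{r})$.

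Finally, I would invoke the admissibility hypothesis together with \Cref{prop:admissibility} to conclude that $\PF_d(\vec{r})$ is nonempty, so the infimum $0$ is actually attained. It follows that the global minima of $\potential{}$ are precisely the elements of $\PF_d(\vec{r})$, as claimed.

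There is no real obstacle here; the statement is a direct consequence of packaging the definition of $\potential{}$ as a sum of squares. The only ingredient beyond pure algebra is the existence assertion from \Cref{prop:admissibility}, which rules out the degenerate possibility that the nominal minimum value zero is unattained. The substantive content of the paper—showing there are no \emph{spurious} local minimizers—will be taken up separately in the proof of \Cref{thm:main}.
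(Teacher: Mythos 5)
Your proof is correct and follows exactly the paper's reasoning: the paper likewise observes that $\potential{}$ is nonnegative with zero set precisely $\PF_d(\vec{r})$, and then invokes \Cref{prop:admissibility} (via admissibility of $\vec{r}$) to ensure the zero value is attained. No differences of substance.
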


In particular, $\left(\frac{d}{n}, \dots , \frac{d}{n}\right)$ is admissible, so the equal-norm Parseval frames are the global minima of $\potential{\left(\frac{d}{n}, \dots , \frac{d}{n}\right)}$.

\subsection{Optimizing the Total Frame Energy}

Since $\PF_d(\vec{r})$ consists precisely of the global minima of $\potential{}$, we might hope to be able to flow a random frame to an element of $\PF_d(\vec{r})$ by negative gradient flow of $\potential{}$. Indeed, this will turn out to be the case, at least when $\vec{r}$ is rational. The restriction to rational $\vec{r}$ is due to our method of proving the following theorem, which uses tools from algebraic geometry: we don't expect that this is an essential hypothesis.

In order to state the theorem, we recall one definition: a frame $F \in \K^{d \times n}$ is \emph{full spark} if every $d \times d$ minor of $F$ is non-singular. Full spark frames provide optimal reconstructions of the largest possible class of sparse signals~\cite{alexeevFullSparkFrames2012,donohoOptimallySparseRepresentation2003}. 

\begin{thm}\label{thm:main}
	Suppose $\vec{r} \in \Q_+^n$ is admissible, so that $\PF_d(\vec{r}) \neq \emptyset$. Consider the negative gradient flow $\Gamma_{\vec{r}}: \K^{d \times n} \times [0,\infty) \to \K^{d \times n}$ defined by the differential equation
	\[
		\Gamma_{\vec{r}}(F_0, 0) = F_0, \qquad \frac{d}{dt} \Gamma_{\vec{r}}(F_0,t) = -\nabla \potential{}(\Gamma_{\vec{r}}(F_0,t)),
	\]
	where $F_0 \in \K^{d \times n}$ is some initial matrix.
	
	If $F_0$ is full spark, then $\lim_{t \to \infty} \Gamma_{\vec{r}}(F_0,t)$ is an element of $\PF_d(\vec{r})$.
\end{thm}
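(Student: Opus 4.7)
The plan is to follow the GIT-based strategy the authors develop in their prior work \cite{mixonThreeProofsBenedetto2023,needhamFusionFrameHomotopy2023,needhamGeometricApproachesMatrix2024,caineNormalizingParsevalFrames2024}. As \Cref{rem:squared norm} notes, $\potential{}$ is (up to an additive constant) the squared norm of a momentum map $\mu$ for the Hamiltonian action of $K=\unitary(d)\times \unitary(1)^n$ on $\C^{d\times n}$, whose complexification is the action of $G=\GL(d,\C)\times(\C^*)^n$ given by $(A,\theta)\cdot F = AF\diag(\theta)$. The structural results on gradient flows of $\|\mu\|^2$ (Kempf--Ness, Ness, Kirwan) dictate the shape of the proof: the trajectory $\Gamma_{\vec{r}}(F_0,\cdot)$ is tangent to the single complex orbit $G\cdot F_0$; it converges to a critical point of $\potential{}$ once shown to be precompact; and this limit lies in $\PF_d(\vec{r})$ precisely when $G\cdot F_0$ is \emph{polystable}, in which case the limit is a zero of $\mu$ inside $G\cdot F_0$.

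I would first verify precompactness of the trajectory. Since $\potential{}$ is real analytic and decreases along its gradient flow, the \L{}ojasiewicz gradient inequality reduces convergence as $t\to\infty$ to boundedness, which follows because $\potential{}$ is coercive on $\K^{d\times n}$: the leading fourth-order part $\|FF^\ast\|_{\text{Fr}}^2$ of $\|FF^\ast-\Id_d\|_{\text{Fr}}^2$ dominates as $\|F\|_{\text{Fr}}\to\infty$, so the sublevel sets of $\potential{}$ are compact. Hence the trajectory is contained in a compact sublevel set and converges.

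I would next classify the critical points and show that the full spark hypothesis forces polystability of $G\cdot F_0$. By Kirwan's stratification of $\|\mu\|^2$, each non-minimal critical point $F$ admits a nontrivial block decomposition: its columns cluster into groups spanning the distinct eigenspaces of $FF^\ast$, and on each such eigenspace the corresponding sub-frame is a Parseval-type frame with a reduced norm vector. In particular, a proper subset of columns spans a proper $FF^\ast$-invariant subspace $V\subsetneq\K^d$. The hypothesis $\vec{r}\in\Q_+^n$ enters precisely here, as clearing denominators to integer weights linearizes the $G$-action on a suitable projective space and makes the Hilbert--Mumford numerical criterion available on an honest algebraic GIT quotient. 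A Hilbert--Mumford check then shows that any one-parameter subgroup $\lambda: \C^*\to G$ destabilizing $F_0$ must drive at least $d$ of its columns into a common proper subspace of $\K^d$ in the limit $t\to 0$, contradicting full spark; hence $G\cdot F_0$ is polystable.

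Combining these ingredients, the flow from $F_0$ stays in the polystable orbit $G\cdot F_0$, is precompact, and therefore converges to a zero of $\mu$ in $G\cdot F_0$, i.e.\ to an element of $\PF_d(\vec{r})$. The main obstacle I anticipate is the Hilbert--Mumford computation: one must argue explicitly that every one-parameter subgroup of $G$ which degenerates $F_0$ as $t\to 0$ collapses some $d$-tuple of columns into a proper subspace of $\K^d$, establishing the equivalence ``full spark iff polystable'' in the present general rational-norm setting. The structure of such a computation is present in \cite{mixonThreeProofsBenedetto2023,caineNormalizingParsevalFrames2024} for special cases, and adapting it to arbitrary admissible $\vec{r}\in\Q_+^n$ is the crux of the proof.
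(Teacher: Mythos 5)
There is a genuine gap, and it sits exactly at what you call the crux. You pivot from semistability to \emph{polystability}: you claim the flow limit lies in $\PF_d(\vec{r})$ precisely when $G\cdot F_0$ is polystable, with the limit a zero of the momentum map \emph{inside} $G\cdot F_0$, and you propose to prove ``full spark iff polystable.'' That equivalence is false for general admissible rational $\vec{r}$, and polystability is not the right condition. Concrete counterexample: $d=2$, $n=3$, $\vec{r}=(1,\tfrac12,\tfrac12)$. Any $F\in\PF_2(\vec{r})$ has $\|f_1\|^2=1$ and $FF^\ast=\Id_2$, which forces $f_2,f_3\perp f_1$, so $f_2\parallel f_3$ and no element of $\PF_2(\vec{r})$ is full spark. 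A full spark $F_0$ with these data is semistable (no collection of columns overloads a subspace in the sense of \Cref{thm:general_unstable_condition}), but its orbit is not closed: a one-parameter subgroup fixing the line of $f_1$ and contracting the complementary direction has a limit in which $f_2,f_3$ become parallel, a nonzero point outside $G\cdot F_0$ (the orbit preserves full spark). So $F_0$ is semistable but not polystable, and the flow limit lies in $\overline{G\cdot F_0}\setminus G\cdot F_0$ --- yet the theorem still holds for it. Your ``precisely when polystable'' is therefore wrong, and the Hilbert--Mumford computation you flag as the main obstacle could not succeed as stated; moreover the Hilbert--Mumford criterion (limits of one-parameter subgroups reaching $0$) detects semistability, not closedness of orbits, so it is the wrong tool for certifying polystability in any case. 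What is actually needed, and what the paper proves, is: (i) full spark $\Rightarrow$ semistable (checked on $d\times d$ minors, \Cref{prop:full spark implies semistable}); (ii) the flow and its limit preserve semistability (\Cref{prop:flow preserves property}); (iii) every non-minimizing critical point --- a blockwise tight frame whose weights overload some eigenspace of $FF^\ast$, or a frame with zero columns --- is unstable (\Cref{lem:non-minimizing critical points overweight some subspace}, \Cref{prop:orthodecomposable unstable}, \Cref{prop:zero implies unstable}); hence the semistable limit critical point is a global minimum. Your sketch omits step (iii) entirely because the polystability shortcut was meant to replace it, so as written the argument does not close. (Had you kept semistability throughout and invoked the Kirwan--Ness--Duistermaat--Lerman retraction of the semistable locus onto the zero level of the momentum map, the skeleton would be correct, at the cost of machinery the paper deliberately avoids.)

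Two further technical points would also need repair. First, the group: for the full complexified group $\GL(d)\times(\C^\times)^n$ acting by $(A,\theta)\cdot F=AF\diag(\theta)$, every nonzero matrix is unstable (central scalars in $\GL(d)$ contract everything to $0$), so the GIT analysis must be run for $\SL(d)\times S((\C^\times)^n)$ with the $\vec{r}$-dependent integer exponents $S/s_i$, as in \eqref{eq:action}; the leftover radial/scalar direction of the flow then has to be controlled separately --- the paper does this by showing the zero matrix is a local maximum of $\potential{}$ (\Cref{lem:zero is a max}) so the scalar factor cannot degenerate along the flow. Second, your precompactness/coercivity observation is fine (and consistent with the paper's appeal to \L{}ojasiewicz for existence of the limit), but it is the least of the difficulties; the substance of the proof is (i)--(iii) above.
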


The assumption that $F_0$ is full spark can be relaxed slightly: it is enough for $F_0$ to have property $\Sr$, defined below in \Cref{def:Property S} and fully characterized in \Cref{thm:general_unstable_condition}. As we'll see in \Cref{prop:full spark implies semistable}, having property $\Sr$ is a weaker condition than being full spark.

Note that $\potential{}$ is a real polynomial on the real vector space $\K^{d \times n}$, so the gradient flow from any point has a well-defined limit~\cite{lojasiewiczTrajectoiresGradientDune1984} and hence $F_\infty \coloneqq \lim_{t \to \infty} \Gamma_{\vec{r}}(F_0,t)$ exists.

It is well-known that almost every element of $\K^{d\times n}$ has full spark; see, for example,~\cite[Theorem~13]{alexeevFullSparkFrames2012} and~\cite[Theorem~2.6]{blumensathSamplingTheoremsSignals2009}. Indeed, the collection of spark-deficient matrices in $\K^{d \times n}$ is the zero locus of the product of determinants of $d \times d$ minors, so it has positive codimension in $\K^{d \times n}$, and hence is a null set with respect to any measure which is absolutely continuous with respect to Lebesgue measure. Therefore, \Cref{thm:main} tells us that almost every $d \times n$ matrix will flow to an element of $\PF_d(\vec{r})$ under the negative gradient flow of $\potential{}$.

In turn, this implies that there are points in $\K^{d \times n}$ arbitrarily close to any non-minimal critical point of $\potential{}$ which flow to a global minimum, so non-minimal critical points of $\potential{}$ cannot be basins of attraction of the gradient flow. Since $\potential{}$ is a polynomial defined on a Euclidean space, local minima must be basins of attraction~\cite[Theorem~3]{absilStableEquilibriumPoints2006}, so \Cref{thm:main} implies that $\potential{}$ has no spurious local minima:

\begin{cor}\label{cor:no spurious local mins}
	When $\vec{r} \in \Q_+^n$ is admissible, all local minima of $\potential{}$ are global minima.
\end{cor}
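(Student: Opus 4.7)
The plan is to prove the contrapositive in spirit: I will show that any local minimum $F^*$ of $\potential{}$ must lie in the $\omega$-limit set of some full-spark initial condition, and then invoke \Cref{thm:main} to conclude that $F^*$ is already a global minimum.

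First, I would pick an arbitrary local minimum $F^* \in \K^{d \times n}$ of $\potential{}$. Since $\potential{}$ is a real polynomial on a finite-dimensional real vector space, it is (real) analytic, and a theorem of Absil–Kurdyka (cited in the excerpt as~\cite{absilStableEquilibriumPoints2006}) guarantees that any local minimum of such a function is a Lyapunov-stable equilibrium of the negative gradient flow whose basin of attraction is an \emph{open} neighborhood. Concretely, there exists an open set $U \ni F^*$ such that for every $F_0 \in U$, the flow $\Gamma_{\vec{r}}(F_0, t)$ is defined for all $t \geq 0$ and satisfies $\lim_{t\to\infty} \Gamma_{\vec{r}}(F_0,t) = F^*$.

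Next, I would use the fact that the set of full-spark matrices is dense in $\K^{d\times n}$, because its complement is the zero set of a finite product of $d \times d$ minor determinants and hence an algebraic subvariety of positive codimension (as noted in the excerpt via~\cite{alexeevFullSparkFrames2012,blumensathSamplingTheoremsSignals2009}). Therefore the open set $U$ contains some full-spark matrix $F_0$. Applying \Cref{thm:main} to $F_0$, the flow $\Gamma_{\vec{r}}(F_0,t)$ converges to an element of $\PF_d(\vec{r})$, which by \Cref{cor:global minima} is a global minimum of $\potential{}$. On the other hand, by the choice of $U$, this same flow converges to $F^*$. Uniqueness of limits forces $F^* \in \PF_d(\vec{r})$, so $F^*$ is a global minimum.

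There is no real obstacle once \Cref{thm:main} is in hand; the only non-trivial ingredient imported from outside is the Absil–Kurdyka stability statement, which is precisely the bridge that upgrades ``local minimum'' to ``open basin of attraction'' and thereby lets density of the full-spark locus do the work. In particular, this argument would fail for a merely smooth $\potential{}$ without the analyticity (or some equivalent Łojasiewicz-type) hypothesis, since then a local minimum need not have an open basin; the polynomial nature of $\potential{}$ is essential.
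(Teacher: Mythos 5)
Your overall strategy is the same as the paper's: combine density of the full-spark locus with the Absil--Kurdyka result and \Cref{thm:main}. However, the concrete form in which you invoke \cite{absilStableEquilibriumPoints2006} is stronger than what that theorem provides, and as stated that step fails. Their result gives Lyapunov stability of a local minimizer of an analytic (here polynomial) function: trajectories started sufficiently close to $F^*$ remain in a prescribed neighborhood of $F^*$. It does not give an open set $U \ni F^*$ all of whose points flow to $F^*$ itself; that is asymptotic stability, which in general requires the minimizer to be isolated and is false otherwise (for $f(x,y)=x^2$, points near the origin flow to $(0,y)$, not to $(0,0)$). Non-isolatedness is exactly the situation here: the global minimum set $\PF_d(\vec{r})$ is positive-dimensional, and a putative spurious local minimum of $\potential{}$ could likewise be non-isolated, so your final step ``uniqueness of limits forces $F^* \in \PF_d(\vec{r})$'' does not go through as written.

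The repair is minor and keeps the argument on the paper's track. Note first that if $\potential{}(F^*)=0$ then $F^*$ is already a global minimum, since admissibility makes the minimum value $0$ (\Cref{cor:global minima}). So suppose $\potential{}(F^*)=c>0$; since $F^*$ is a local minimum, there is a ball $B$ around $F^*$ on which $\potential{}\geq c$, and Lyapunov stability gives a smaller ball $B'$ such that every trajectory started in $B'$ stays in $B$ for all time. By density of the full-spark locus, $B'$ contains a full-spark $F_0$; the trajectory from $F_0$ then satisfies $\potential{}\geq c$ for all time and hence in the limit, so its limit cannot lie in $\PF_d(\vec{r})$, contradicting \Cref{thm:main}. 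With this adjustment your proof coincides in substance with the paper's (which argues that non-minimal critical points cannot attract nearby points because full-spark points flow to global minima, while local minima are stable by the same citation), and your closing remark is apt: the polynomial/analytic character of $\potential{}$ is what supplies both the stability statement and the existence of the flow limit.
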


This is a direct analog of the classical Benedetto–Fickus theorem~\cite{benedettoFiniteNormalizedTight2003} in this setting.

As in our previous work~\cite{mixonThreeProofsBenedetto2023,needhamFusionFrameHomotopy2023,caineNormalizingParsevalFrames2024}, the strategy for proving \Cref{thm:main} is to identify a property $\Sr$ satisfying the following conditions:

\begin{conditions}\label{Sr conditions}
\begin{enumerate}
	\item Every full spark frame in $\K^{d \times n}$ satisfies $\Sr$ and
	\item gradient flow (and its limit) preserves $\Sr$, but
	\item non-minimizing critical points of $\potential{}$ do not satisfy $\Sr$.
\end{enumerate}
\end{conditions}

\subsection{Geometric Invariant Theory and Semi-Stability}

Property $\Sr$ will be semi-stability with respect to a certain algebraic group action on $\K^{d \times n}$. This idea is based on Mumford's \emph{geometric invariant theory} (GIT)~\cite{mumfordGeometricInvariantTheory1994}, so we give some general background (which is basically a gloss of Thomas's explanation~\cite{thomasNotesGITSymplectic2005}) before specifying to our problem.

Let $G$ be a reductive algebraic group that acts linearly on a finite-dimensional complex vector space $V$; for example, $\GL(V)$ or $\SL(V)$. A non-zero vector $v \in V$ is \emph{unstable} under the action of $G$ if the closure $\overline{G \cdot v}$ of the $G$-orbit of $v$ contains the origin. If $v$ is not unstable, it is called \emph{semi-stable}. From an algebraic geometry perspective, the point is that the unstable points are precisely those on which every $G$-invariant homogeneous polynomial evaluates to zero, so the semi-stable points are those which can potentially be distinguished by $G$-invariant homogeneous polynomials.

All orbits consist either entirely of semi-stable points or entirely of unstable points:

\begin{prop}[{see, e.g., \cite[Proposition 6]{mixonThreeProofsBenedetto2023}}]\label{prop:semistable orbit}
	Given $v \in V$ that is semi-stable, every point in $\overline{G \cdot v}$ is also semi-stable.
\end{prop}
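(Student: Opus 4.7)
The plan is to reduce the claim to a single inclusion of orbit closures, namely $\overline{G \cdot w} \subseteq \overline{G \cdot v}$ for every $w \in \overline{G \cdot v}$. Once this is in hand, the semi-stability of $w$ is immediate: if $0$ were to lie in $\overline{G \cdot w}$, then by the inclusion it would also lie in $\overline{G \cdot v}$, contradicting the semi-stability of $v$.

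To establish the inclusion, I would first show that $\overline{G \cdot v}$ is $G$-invariant. Fix $w \in \overline{G \cdot v}$ and pick a net (or sequence, since $V$ is finite-dimensional) $g_n \in G$ with $g_n \cdot v \to w$. For any $g \in G$, continuity of the linear $G$-action on $V$ gives $(g g_n) \cdot v = g \cdot (g_n \cdot v) \to g \cdot w$, so $g \cdot w \in \overline{G \cdot v}$. Hence $G \cdot w \subseteq \overline{G \cdot v}$, and taking closures yields $\overline{G \cdot w} \subseteq \overline{G \cdot v}$.

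Finally, I would handle the nonzero condition in the definition of semi-stability separately: since $v$ is semi-stable we have $0 \notin \overline{G \cdot v}$, and therefore $w \in \overline{G \cdot v}$ forces $w \neq 0$. Combined with the previous paragraph, this confirms that $w$ meets both parts of the definition of semi-stability.

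There is no real obstacle in this argument: it uses only continuity of the linear action and the basic fact that $G$-orbit closures are $G$-invariant. The only point deserving care is to verify both conditions in the definition of semi-stability — that $w$ is nonzero and that its orbit closure avoids the origin — rather than just the latter; both follow from the single assumption $0 \notin \overline{G \cdot v}$.
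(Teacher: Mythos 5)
Your proposal is correct: the nesting $\overline{G \cdot w} \subseteq \overline{G \cdot v}$ via $G$-invariance of the orbit closure, plus the observation that $0 \notin \overline{G \cdot v}$ also forces $w \neq 0$, is exactly the standard argument, and it needs nothing beyond continuity of the linear action. The paper itself gives no proof—it cites Proposition 6 of \cite{mixonThreeProofsBenedetto2023}—and your argument is essentially the same elementary one used there.
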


Likewise, semi-stability is really a feature of the line containing $v$: $v$ is semi-stable if and only if $\lambda v$ is semi-stable for every $\lambda \in \C^\times$.

A \emph{one-parameter subgroup} of $G$ is a homomorphism of algebraic groups $\lambda : \C^\times \to G$, which induces a decomposition $V = \bigoplus_{i \in I} V_i$ and integer weights $w : I \to \Z$ so that, for every $i \in I$, $v \in V_i$, and $t \in \C^\times$,
\[
	\lambda(t) \cdot v = t^{w(i)}v.
\]

It follows immediately that a nonzero vector $v \in V$ is unstable under the action of $G$ if there exists a one-parameter subgroup $\lambda$ so that
\[
	\lim_{t \to 0} \lambda(t) \cdot v = 0.
\]
Much less obvious is that the converse holds:

\begin{thm}[{Hilbert–Mumford criterion~\cite{hilbertUeberVollenInvariantensysteme1893,mumfordGeometricInvariantTheory1994}}]\label{thm:hilbert-mumford}
	$v \in V \backslash\{0\}$ is unstable under the action of $G$ if and only if there exists a one-parameter subgroup $\lambda$ of $G$ so that 
	\[
		\lim_{t \to 0} \lambda(t) \cdot v = 0.
	\]
\end{thm}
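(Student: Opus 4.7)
The easy direction is essentially definitional: if there exists a one-parameter subgroup $\lambda : \C^\times \to G$ with $\lim_{t \to 0} \lambda(t) \cdot v = 0$, then the image of the map $t \mapsto \lambda(t) \cdot v$ lies in $G \cdot v$ and has $0$ as a limit point, so $0 \in \overline{G \cdot v}$ and $v$ is unstable by definition.

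For the nontrivial direction, I would assume $v \in V \setminus \{0\}$ is unstable and produce the required one-parameter subgroup. The plan is to first extract an \emph{algebraic} curve in $G$ whose action on $v$ has $0$ as a limit, and then straighten it to a one-parameter subgroup using the structure theory of reductive groups. Concretely, since $0 \in \overline{G \cdot v}$, the valuative criterion of properness (applied to the orbit map $G \to V$ after compactifying appropriately, or via Chow's lemma/normalization of a curve through $v$ in the orbit closure) yields a morphism $\gamma : \operatorname{Spec} \C(\!(t)\!) \to G$ such that $\gamma(t) \cdot v$ extends to a morphism $\operatorname{Spec} \C[\![t]\!] \to V$ sending the closed point to $0$. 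Equivalently, we obtain a formal arc in $G$ whose action sends $v$ to $0$ as $t \to 0$.

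The second step converts this formal arc into an honest one-parameter subgroup. Here I would invoke the Cartan (or Iwahori) decomposition for the loop group $G(\C(\!(t)\!))$: after choosing a maximal torus $T \subset G$ with cocharacter lattice $X_\ast(T)$, any $\gamma(t) \in G(\C(\!(t)\!))$ factors as $\gamma(t) = k_1(t)\, \mu(t)\, k_2(t)$, where $\mu \in X_\ast(T)$ is a genuine one-parameter subgroup and the $k_i(t) \in G(\C[\![t]\!])$ are ``bounded'' in the sense that they extend to the closed point $t=0$. Setting $v' \coloneqq k_2(0) \cdot v \in G \cdot v$, the limit $\lim_{t \to 0} \gamma(t) \cdot v = 0$ forces $\lim_{t \to 0} \mu(t) \cdot v' = 0$, since the $k_1$-factor acts by a matrix with a finite limit. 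Conjugating $\mu$ by the element $k_2(0)^{-1}$ then produces a one-parameter subgroup $\lambda \coloneqq k_2(0)^{-1} \mu\, k_2(0)$ of $G$ with $\lim_{t \to 0} \lambda(t) \cdot v = 0$, which is exactly what we need.

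The main obstacle is the second step: the passage from a formal algebraic arc in $G$ to an honest one-parameter subgroup with the same limiting behavior on $v$. This is the genuine content of the Hilbert--Mumford criterion, and it relies on deep structural facts about reductive groups (existence and conjugacy of maximal tori, the Cartan decomposition of the loop group, boundedness properties of $G(\C[\![t]\!])$-cosets). Producing the arc in the first step is comparatively routine from algebraic geometry, and the final reduction---tracking how the decomposition $k_1(t)\mu(t)k_2(t)$ interacts with the weight decomposition $V = \bigoplus V_i$ to ensure that the ``bounded'' factors really do not spoil the vanishing limit---requires some care but no additional theory.
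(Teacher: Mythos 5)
The paper does not actually prove this statement: the Hilbert--Mumford criterion is imported as a classical result, with only the trivial direction (existence of a destabilizing one-parameter subgroup implies $0\in\overline{G\cdot v}$) noted as immediate, and the hard converse attributed to Hilbert and Mumford. So there is no in-paper argument to compare against; what you have written is an outline of the standard proof from the literature (Mumford's GIT, following Iwahori's decomposition; see also Kempf and Birkes), and it is correct in structure. Your easy direction is fine as stated. For the converse, your two-step plan --- (1) produce $\gamma\in G(\C(\!(t)\!))$ with $\gamma(t)\cdot v$ extending over $t=0$ to the value $0$, and (2) straighten $\gamma$ via the decomposition $G(\C(\!(t)\!))=G(\C[\![t]\!])\,X_\ast(T)\,G(\C[\![t]\!])$ and conjugate by $k_2(0)^{-1}$ --- is exactly the classical route. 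Two points deserve to be flagged as the places where the sketch is thinnest. First, in step (1) the lift of the arc from $\overline{G\cdot v}$ to $G$ is generally only available after a finite extension of $\C(\!(t)\!)$, i.e.\ after reparametrizing $t\mapsto t^{1/m}$; this is harmless (it rescales the weights of $\mu$ by $m$) but should be said. Second, the assertion that $\lim_{t\to 0}\mu(t)\cdot k_2(0)v=0$ is not automatic from ``$k_1$ has a finite limit'': one must decompose $k_2(t)v=\sum_i v_i(t)$ into $\mu$-weight spaces and observe that $\lim_{t\to 0}\sum_i t^{m_i}v_i(t)=0$ forces $m_i>0$ for every $i$ with $v_i(0)\neq 0$, which is what kills $\mu(t)\cdot k_2(0)v$. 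You acknowledge both gaps qualitatively, and the second is a short computation, so the proposal is a faithful (if compressed) account of the genuine content of the theorem rather than a complete proof; as the paper treats the criterion as a black box, nothing more is required for the paper's purposes.
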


With this setup, we now specify to our problem. The complex vector space will be $\C^{d \times n}$ and the algebraic group will be $G = \SL(d) \times S((\C^\times)^n)$, where $S((\C^\times)^n)$ is the group of diagonal $n \times n$ matrices with determinant 1. The action of $G$ on $\C^{d \times n}$ will be determined by $\vec{r}$: while the $\SL(d)$ action will always be by left multiplication, the $S((\C^\times)^n)$ action will depend on $\vec{r}$. 

To get an algebraic action, we need to convert $\vec{r} \in \Q_+^n$ into a list of positive integers, which we can do by clearing denominators. More precisely, if $r_i = \frac{a_i}{b_i}$ for all $i=1,\dots , n$, then define $s_i \coloneqq r_i\frac{\lcm\{b_1, \dots , b_n\}}{\gcd\{a_1, \dots , a_n\}}$. Then $\vec{s} = (s_1, \dots , s_n) \in \N^n$ and $\frac{s_i}{s_j} = \frac{r_i}{r_j}$ for all $i$ and $j$. Also, let $S \coloneqq \prod_{i=1}^n s_i$ be the product of the $s_i$.

Then we define the action of $G$ as follows: if $A \in \SL(d)$ and $D = \diag(t_1, \dots , t_n) \in S((\C^\times)^n)$, then define
\begin{equation}\label{eq:action}
	(A, D) \cdot F \coloneqq A F \left(\diag\left(t_1^{S/s_1}, \dots , t_n^{S/s_n}\right)\right)^{-1} = A F \diag \left(t_1^{-S/s_1}, \dots , t_n^{-S/s_n}\right).
\end{equation}

\begin{definition}\label{def:Property S}
	We say that $F \in \C^{d \times n}$ \emph{has property $\Sr$} if it is semi-stable under the $G$-action defined in \eqref{eq:action}.
\end{definition}

We will eventually build up to a full characterization of property $\Sr$ in \Cref{thm:general_unstable_condition}, but for now we just give an easy special case:

\begin{prop}\label{prop:zero implies unstable}
	If $F \in \C^{d \times n}$ contains zero vectors, then $F$ does not have property $\Sr$.
\end{prop}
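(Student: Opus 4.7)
The plan is to apply the Hilbert--Mumford criterion (\Cref{thm:hilbert-mumford}): it suffices to exhibit a one-parameter subgroup $\lambda : \C^\times \to G = \SL(d) \times S((\C^\times)^n)$ such that $\lim_{t \to 0} \lambda(t) \cdot F = 0$. Since the obstruction to semi-stability is the existence of a zero column, I expect the needed subgroup to lie entirely inside the torus factor $S((\C^\times)^n)$, using the $\SL(d)$ factor trivially.

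Suppose $f_j = 0$ for some index $j$. I would define $\lambda(t) = (\Id_d, D(t))$, where
\[
	D(t) = \diag(t^{a_1}, \dots, t^{a_n}) \quad \text{with} \quad a_i = -1 \text{ for } i \neq j \text{ and } a_j = n-1.
\]
All exponents are integers and $\sum_{i=1}^n a_i = 0$, so $D(t) \in S((\C^\times)^n)$ and $\lambda$ is a genuine one-parameter subgroup of $G$. Applying the action formula \eqref{eq:action}, the $i$-th column of $\lambda(t) \cdot F$ equals $t^{-a_i S/s_i} f_i$, so
\[
	(\lambda(t) \cdot F)_i = \begin{cases} t^{S/s_i} f_i & \text{if } i \neq j, \\ t^{-(n-1) S/s_j} \cdot 0 = 0 & \text{if } i = j. \end{cases}
\]

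Since $S/s_i > 0$ for every $i$, each column with $i \neq j$ has a strictly positive power of $t$ and thus tends to $0$ as $t \to 0$, while the $j$-th column is already zero. Hence $\lim_{t \to 0} \lambda(t) \cdot F = 0$, so \Cref{thm:hilbert-mumford} gives that $F$ is unstable under the $G$-action, i.e., $F$ does not have property $\Sr$. There is no real obstacle here — the construction is forced once one isolates the zero column, and the only thing to verify carefully is the integrality and trace-zero condition needed for $\lambda$ to land in $\SL(d) \times S((\C^\times)^n)$, which is handled by the explicit choice $(a_i) = (-1, \dots, -1, n-1, -1, \dots, -1)$.
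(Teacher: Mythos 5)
Your construction is exactly the paper's proof: the paper reorders so the zero column is last and uses $\lambda(t) = \bigl(I, \diag(t^{-1},\dots,t^{-1},t^{n-1})\bigr)$, which is precisely your weight vector $(-1,\dots,-1,n-1)$, and then concludes via the easy direction of \Cref{thm:hilbert-mumford} just as you do. The proposal is correct, including the determinant-one check and the observation that the exponents $S/s_i$ are positive, so nothing further is needed.
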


\begin{proof}
	Up to re-ordering, we may as well assume the last column of $F$ is the zero vector; i.e., $F = \begin{bmatrix} f_1 & \dots & f_{n-1} & \vec{0} \end{bmatrix}$. Define the one-parameter subgroup $\lambda : \C^\times \to G$ by
	\[
		\lambda(t) \coloneqq \left( I , \diag\left(\frac{1}{t}, \dots , \frac{1}{t}, t^{n-1}\right)\right).
	\]
	Then
	\[
		\lim_{t \to 0}\lambda(t) \cdot F = \lim_{t \to 0}I\begin{bmatrix} f_1 & \dots & f_{n-1} & \vec{0} \end{bmatrix}\diag \left( t^{S/s_1}, \dots , t^{S/s_{n-1}}, t^{-(n-1)S/s_n}\right) = \lim_{t \to 0}\begin{bmatrix} t^{S/s_1}f_1 & \dots & t^{S/s_{n-1}}f_{n-1} & \vec{0} \end{bmatrix} = 0,
	\]
	so $F$ is unstable by the easy direction of \Cref{thm:hilbert-mumford}.
\end{proof}

\section{Proof of \texorpdfstring{\Cref{thm:main}}{Main Theorem}}\label{sec:proof_of_main_theorem}

In the following subsections, we verify that property $\Sr$, from \Cref{def:Property S}, satisfies each of the three conditions stated in \Cref{Sr conditions}; combining these then leads to the proof of \Cref{thm:main}.

\subsection{Full Spark Frames Have Property \texorpdfstring{$\Sr$}{Sr}}

The next result establishes the first item from \Cref{Sr conditions}. 

\begin{prop}\label{prop:full spark implies semistable}
    If $F \in \K^{d \times n}$ has full spark, then it has property $\Sr$.
\end{prop}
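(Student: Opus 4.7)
The plan is to argue by contradiction using the Hilbert--Mumford criterion (\Cref{thm:hilbert-mumford}). Assuming $F$ has full spark but is not semi-stable, there is a one-parameter subgroup $\lambda : \C^\times \to G$ with $\lim_{t \to 0} \lambda(t) \cdot F = 0$. Since every 1-PS of $\SL(d)$ is conjugate within $\SL(d)$ to one landing in the diagonal torus, I would absorb this conjugation into $F$, replacing it by $A^{-1}F$ for some $A \in \SL(d)$ (full spark is preserved), and assume $\lambda(t) = (\diag(t^{a_1},\dots,t^{a_d}),\ \diag(t^{b_1},\dots,t^{b_n}))$ with $\sum_i a_i = 0 = \sum_j b_j$. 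The entrywise action multiplies $F_{ij}$ by $t^{a_i - \gamma_j}$, where $\gamma_j := Sb_j/s_j$, so $\lim_{t\to 0}\lambda(t)\cdot F = 0$ translates into the strict inequalities $a_i > \gamma_j$ for every $(i,j) \in E := \{(i,j) : F_{ij}\neq 0\}$, and the constraints on the 1-PS become $\sum_i a_i = 0$ and $\sum_j s_j \gamma_j = 0$.

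The crux is to combine these strict inequalities into a contradiction via a fractional matching on $E$ whose column sums are tied to the weighted constraint. Since $s_j = K r_j$ for a single positive constant $K$ (by the paper's construction of $\vec s$) and $\sum_j r_j = d$, I would seek $x_{ij} \geq 0$ supported on $E$ with row sums $\sum_j x_{ij} = 1$ and column sums $\sum_i x_{ij} = r_j$. Granted such $x$, the sum $\sum_{i,j} x_{ij}(a_i - \gamma_j)$ is strictly positive, since every nonzero summand is positive and $\sum_{i,j} x_{ij} = d > 0$ forces at least one $x_{ij} > 0$; on the other hand, rearranging gives
\[
\sum_i a_i \cdot 1 \;-\; \sum_j \gamma_j\, r_j \;=\; 0 \;-\; \tfrac{1}{K}\sum_j s_j \gamma_j \;=\; 0,
\]
the desired contradiction.

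The existence of such $x$ reduces, by the standard nonemptiness criterion for the transportation polytope, to the weighted Hall-type condition $\sum_{j \in N(S)} r_j \geq |S|$ for every $S \subseteq [d]$, where $N(S) \subseteq [n]$ is the set of columns incident in $E$ to some row of $S$. Both hypotheses enter here: the full spark assumption forces $|N(S)| \geq |S| + n - d$ (if this failed for some $S$, one could choose a $d$-subset $J$ of columns containing fewer than $|S|$ members of $N(S)$, violating the Hall condition required by the nonvanishing of $\det F[J]$), while admissibility of $\vec r$ yields $r_j \leq 1$ for each $j$ via the $k=1$ case of~\eqref{eq:admissibility}. Combining these with $\sum_j r_j = d$ gives
\[
\sum_{j \in N(S)} r_j \;=\; d - \sum_{j \notin N(S)} r_j \;\geq\; d - \bigl|[n]\setminus N(S)\bigr| \;\geq\; d - (d - |S|) \;=\; |S|,
\]
as needed.

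The main obstacle I anticipate is that a naive application of Hall's theorem to a single $d$-subset $J$ only gives $\sum_{j \in J} \gamma_j < 0$, a family of constraints which is genuinely consistent with $\sum_j s_j \gamma_j = 0$ once the weights $s_j$ are unequal. Bridging the unweighted combinatorial consequence of full spark to the weighted algebraic constraint imposed by the non-uniform torus action forces the upgrade from a single perfect matching to a fractional, weight-sensitive matching, and it is precisely at this step that admissibility, through the bound $r_j \leq 1$, allows the weighted Hall inequality to hold.
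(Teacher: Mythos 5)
Your argument is correct, but it follows a genuinely different route from the paper's. The paper works directly with $d\times d$ minors: for a one-parameter subgroup with diagonalized $\SL(d)$-component, $\det(\lambda(t)\cdot F)_J$ picks up the single factor $t^{-\sum_{j\in J}w(j)S/s_j}$, and full spark ($\det F_J\neq 0$ for every $J$) is used to locate one minor whose exponent is not positive, so that $\lambda(t)\cdot F\not\to 0$; admissibility of $\vec r$ is never invoked there. You instead argue by contradiction at the level of matrix entries, converting $\lim_{t\to 0}\lambda(t)\cdot F=0$ into the strict inequalities $a_i>\gamma_j$ on the support of $F$ and averaging them against a fractional matching with row sums $1$ and column sums $r_j$; full spark enters only through the combinatorial bound $|N(S)|\geq |S|+n-d$ (which you should restrict to nonempty $S$ --- the $S=\emptyset$ case of the Hall condition is vacuous), while admissibility enters through $r_{(1)}\leq 1$ and $\sum_j r_j=d$. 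Your reliance on admissibility is not a defect: the proposition is only ever applied to admissible $\vec r$, and some hypothesis of this kind is genuinely needed, since if some $r_i>1$ then every matrix, full spark or not, is unstable because $f_i$ lies in a one-dimensional subspace (cf.\ \Cref{thm:general_unstable_condition}); correspondingly, the paper's term-by-term estimate $\sum_{j\in J}w(j)S/s_j\geq \overline{s}\sum_{j\in J}w(j)$ is only justified when the selected weights $w(j)$, $j\in J$, are all nonnegative, so your weight-sensitive averaging, which pairs $\sum_i a_i=0$ with the weighted constraint $\sum_j s_j\gamma_j=0$ exactly, is arguably the more robust argument and makes transparent where majorization is used. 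What the paper's approach buys is brevity and the absence of any appeal to network flows; in your write-up the feasibility criterion for the transportation polytope with forbidden cells is quoted as standard --- it is, and it follows from max-flow/min-cut (a cut leaving the source edges of a row set $S$ uncut must cut the sink edges of $N(S)$ and so has capacity $d-|S|+\sum_{j\in N(S)}r_j\geq d$), but a sentence or reference to that effect should appear in a final version.
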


\begin{proof}
	Suppose $F \in \K^{d \times n}$. Whether $\K = \R$ or $\C$, we can interpret $F$ as an element of $\C^{d \times n}$, so the notion of semi-stability with respect to the $G$-action on $\C^{d \times n}$ makes sense.
	
	Our goal is to use \Cref{thm:hilbert-mumford} to show that $F$ is semi-stable, and hence has property $\Sr$. So let $\lambda : \C^\times \to G$ be a one-parameter subgroup. Then $\lambda(t) = (A(t),D(t)) \in \SL(d) \times S((\C^\times)^n)$ and, up to a change of basis, we may as well assume $A(t)$ is diagonal. There are integer weights $u(1), \dots , u(d), w(1), \dots , w(n) \in \Z$ with $u(1) + \dots + u(d) = 0$ and $w(1) + \dots + w(n) = 0$ so that 
	\[
		\lambda(t) \cdot F = (A(t), D(t)) \cdot F = \diag(t^{u(1)}, \dots , t^{u(d)}) F \diag(t^{-w(1)S/s_1}, \dots , t^{-w(n)S/s_n}) = \begin{bmatrix} t^{u(i)-w(j)S/s_j}f_{ij} \end{bmatrix}_{i,j},
	\]
	where $f_{ij}$ is the $(i,j)$ entry of $F$.
	
	Let $J \subset \{1, \dots , n\}$ with $|J|=d$ and let $F_J$ be the $d \times d$ minor of $F$ whose columns have indices in $J$, and let $(\lambda(t) \cdot F)_J$ be the corresponding minor of $\lambda(t) \cdot F$. Then
	\begin{equation}\label{eq:action minor}
		\det (\lambda(t) \cdot F)_J = \det\begin{bmatrix} t^{u(i)-w(j)S/s_j}f_{ij} \end{bmatrix}_{1 \leq i \leq d, j \in J} = t^{ \sum_{i=1}^d u(i) - \sum_{j \in J} w(j)S/s_j} \det F_J = t^{- \sum_{j \in J} w(j)S/s_j} \det F_J,
	\end{equation}
	since $\sum_{i=1}^d u(i) = 0$. As $F$ is full spark, $\det F_J  \neq 0$ for all $J$, so \eqref{eq:action minor} goes to zero as $t \to 0$ if and only if $\sum_{j \in J} w(j)\frac{S}{s_j}<0$. 
	
	Up to reindexing, we may as well assume that  $w(1) \geq \dots \geq w(n)$. Since $\sum_{j=1}^n w(j) = 0$, either we have $w(1) = \dots = w(n) = 0$, in which case we always have $\sum_{j \in J} w(j)\frac{S}{s_j} = 0$, so $\lim_{t \to 0} \det (\lambda(t) \cdot F)_J \neq 0$, or we have $w(1) > 0 > w(n)$. In this latter case, let $J = \{1, \dots , d\}$, let $\overline{s} = \min \left\{\frac{S}{s_j} : j \in J\right\}$ and consider
	\[
		\sum_{j \in J} w(j)\frac{S}{s_j} = \sum_{j=1}^d w(j) \frac{S}{s_j} \geq \sum_{j=1}^d w(j) \overline{s} = \overline{s}\sum_{j=1}^d w(j) >0
	\]
	since $\sum_{j=1}^d w(j) > \sum_{j=1}^n w(j) = 0$ and $\overline{r} > 0$. Hence, for this choice of $J$ we have
	\[
		\lim_{t \to 0} \det (\lambda(t) \cdot F)_J = \lim_{t \to 0} t^{- \sum_{j \in J} w(j)S/s_j} \det F_J \neq 0.
	\]
	Of course, this implies that $\lim_{t \to 0} \lambda(t) \cdot F \neq 0$. Since our initial choice of one-parameter subgroup $\lambda$ was arbitrary, \Cref{thm:hilbert-mumford} implies that $F$ has property $\Sr$.
\end{proof}

\subsection{Gradient Flow Preserves \texorpdfstring{$\Sr$}{Sr}}

Our next goal is to verify the second condition from \Cref{Sr conditions}. A straightforward computation gives the gradient of $\potential{}$:

\begin{prop}\label{prop:gradient}
The gradient of the total frame energy is given by
	\[
		\nabla \potential{}(F) = F \left[ 4F^\ast F+  \diag\left(\frac{\|f_1\|^2}{r_1^2}-\frac{1}{r_1} - 4, \dots , \frac{\|f_n\|^2}{r_n^2}-\frac{1}{r_n} - 4\right)\right].
	\]
\end{prop}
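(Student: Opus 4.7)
The plan is to split the total frame energy as $E_{\vec r} = g_1 + g_2$, where
\[
g_1(F) = \|FF^\ast - \Id_d\|_{\text{Fr}}^2 \quad \text{and} \quad g_2(F) = \frac{1}{4}\sum_{i=1}^n \left(\frac{\|f_i\|^2}{r_i}-1\right)^2,
\]
compute $\nabla g_1$ and $\nabla g_2$ separately, then add and factor an $F$ on the left. Throughout, I would view $\K^{d\times n}$ as a real inner product space with $\langle A, B\rangle = \operatorname{Re}\tr(AB^\ast)$, which is the inner product inducing the Frobenius norm; with this convention, standard matrix calculus works uniformly in $\K = \R$ or $\C$.

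For $g_1$, since $FF^\ast - \Id_d$ is Hermitian, I would expand
\[
g_1(F) = \tr\bigl((FF^\ast - \Id_d)^2\bigr) = \tr(FF^\ast FF^\ast) - 2\tr(FF^\ast) + d.
\]
The standard identities $\nabla \tr(FF^\ast) = 2F$ and $\nabla \tr(FF^\ast FF^\ast) = 4FF^\ast F$ then give
\[
\nabla g_1(F) = 4FF^\ast F - 4F = F\bigl(4F^\ast F - 4\Id_n\bigr).
\]
These identities are immediate either by entrywise differentiation (in the real case) or by Wirtinger calculus (in the complex case, using that $g_1$ is real-valued and quadratic in $F$ and $F^\ast$).

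For $g_2$, the key observation is that the $i$-th summand depends only on the $i$-th column $f_i$ of $F$. Writing $\|f_i\|^2 = \langle f_i, f_i\rangle$ and using that the gradient of $f_i \mapsto \|f_i\|^2$ is $2f_i$, the chain rule gives the $i$-th column of $\nabla g_2$ as
\[
\frac{1}{4} \cdot 2\left(\frac{\|f_i\|^2}{r_i} - 1\right) \cdot \frac{2 f_i}{r_i} \;=\; \left(\frac{\|f_i\|^2}{r_i^2} - \frac{1}{r_i}\right) f_i,
\]
while all other columns of $\nabla g_2$ are zero in their contribution from the $i$-th summand. Assembling across $i$, this is exactly
\[
\nabla g_2(F) = F \diag\!\left(\frac{\|f_1\|^2}{r_1^2} - \frac{1}{r_1},\; \dots,\; \frac{\|f_n\|^2}{r_n^2} - \frac{1}{r_n}\right).
\]

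Adding the two expressions, factoring $F$ on the left, and absorbing the $-4\Id_n$ from $\nabla g_1$ into the diagonal matrix from $\nabla g_2$ yields the claimed formula. There is no real obstacle here — the result is a direct computation — the only point requiring mild care is the complex case, where one must fix once and for all that ``gradient'' refers to the real gradient with respect to $\langle \cdot, \cdot \rangle = \operatorname{Re}\tr(\,\cdot\,\cdot^\ast)$, so that differentiating the real-valued $g_1$ and $g_2$ is unambiguous.
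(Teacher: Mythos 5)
Your computation is correct and is exactly the ``straightforward computation'' the paper alludes to without spelling out: splitting $\potential{}$ into the Frobenius term and the column-norm penalty, using $\nabla\tr(FF^\ast)=2F$ and $\nabla\tr(FF^\ast FF^\ast)=4FF^\ast F$ with respect to the real inner product $\operatorname{Re}\tr(AB^\ast)$, and assembling the column-wise gradients of the penalty into a diagonal factor. The resulting formula, including the absorption of $-4\Id_n$ into the diagonal matrix, matches the proposition, and your care about interpreting the gradient as the real gradient in the complex case is the right (and only) subtlety.
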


Define 
\[
	a_F \coloneqq \frac{\sum_{i=1}^n \left( \frac{\|f_i\|^2}{r_i^2} - \frac{1}{r_i}-4 \right)}{\sum_{i=1}^n s_i}.
\]
If we let $\Delta = -\frac{1}{S}\diag\left(s_1 \left(\frac{\|f_1\|^2}{r_1^2}-\frac{1}{r_1} - 4 -a_F \right), \dots , s_n \left(\frac{\|f_n\|^2}{r_n^2}-\frac{1}{r_n} - 4 - a_F\right)\right)$, then $\tr \Delta = 0$ and
\[
	\nabla \potential{}(F)  = \left. \frac{d}{dt} \right|_{t=0} \left(\exp\left(t\left(4FF^\ast+a_F\Id_d\right)\right),\exp(t\Delta)\right) \cdot F.
\]
Since exponentiating any square matrix gives an invertible matrix, we know that $\exp\left(t\left(4FF^\ast+a_F\Id_d\right)\right) \in \GL(d)$. Also, $\tr \Delta = 0$ implies that $\exp(t \Delta) \in S((\C^\times)^n)$, so the above equation shows that $\nabla \potential{}(F)$ is tangent to the $\left(\GL(d) \times S((\C^\times)^n)\right)$-orbit of $F$. In general, for $(A,D) \in \GL(d) \times S((\C^\times)^n)$ with $D = (t_1, \dots , t_n)$, we see that
\[
	\left((\det A)^{-1/d} A,D \right) \cdot F = (\det A)^{-1/d} A F \diag \left( t_1^{-S/s_1}, \dots , t_n^{-S/s_n}\right) = (\det A)^{-1/d} (A, D) \cdot F,
\]
so the $\left(\GL(d) \times S((\C^\times)^n)\right)$-orbit of $F$ is contained in the $\left(\C^\times \times G\right)$-orbit of $F$, and we conclude that $\nabla \potential{}(F)$ is tangent to the $\left(\C^\times \times G\right)$-orbit of $F$.

\begin{prop}\label{prop:flow preserves property}
	Suppose $F_0 \in \C^{d \times n}$ has property $\Sr$. Then so does $F_t \coloneqq \Gamma_{\vec{r}}(F_0,t)$ for all $t \in [0 , \infty)$ and so does $F_\infty \coloneqq \lim_{t \to \infty} F_t$.
\end{prop}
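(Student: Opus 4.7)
The plan is to verify the statement in two steps: first, that $F_t$ has property $\Sr$ for every finite $t \in [0,\infty)$, and second, that the limit $F_\infty$ also has property $\Sr$. Both steps rely on the orbit-tangency of $\nabla\potential{}$ established in the paragraphs just preceding the proposition, together with \Cref{prop:semistable orbit} and the line-invariance of semi-stability recorded before \Cref{thm:hilbert-mumford}.

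\emph{Finite $t$.} Since $\nabla\potential{}(F)$ is tangent to the $(\C^\times\times G)$-orbit of $F$ at every point, the integral curve $F_t$ of $-\nabla\potential{}$ stays inside the single $(\C^\times\times G)$-orbit of $F_0$, so $F_t = \alpha_t\cdot g_t\cdot F_0$ for some $\alpha_t \in \C^\times$ and $g_t \in G$. Applying \Cref{prop:semistable orbit} to $F_0$ yields semi-stability of $g_t \cdot F_0$, and then line-invariance of semi-stability gives semi-stability of the scalar multiple $F_t = \alpha_t (g_t F_0)$, i.e., property $\Sr$.

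\emph{The limit $F_\infty$.} Automatically $F_\infty \in \overline{(\C^\times\times G)\cdot F_0}$. First, I claim $F_\infty \neq 0$. The Taylor expansion
\[
\potential{}(F) = d + \tfrac{n}{4} - 2\|F\|_{\mathrm{Fr}}^2 - \tfrac12\sum_{i=1}^n \frac{\|f_i\|^2}{r_i} + O(\|F\|_{\mathrm{Fr}}^4)
\]
shows that $F = 0$ is a strict local maximum of $\potential{}$; since $\potential{}(F_t)$ is non-increasing and $F_0 \neq 0$ (by \Cref{prop:zero implies unstable}, property $\Sr$ fails for the zero matrix), the trajectory cannot accumulate at $0$. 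Second, write $F_\infty = \lim_n \alpha_n (g_n F_0)$ with $\alpha_n \in \C^\times$ and $g_n \in G$, and case-split on $|\alpha_n|$. If a subsequence satisfies $\alpha_n \to \alpha \in \C^\times$, then $g_n F_0 \to \alpha^{-1}F_\infty \in \overline{G\cdot F_0}$, which is semi-stable by \Cref{prop:semistable orbit}, so $F_\infty = \alpha(\alpha^{-1}F_\infty)$ is semi-stable by line-invariance. If $|\alpha_n|\to\infty$, then $\|g_n F_0\|\to 0$ since $\|F_\infty\|$ is finite, forcing $0\in \overline{G\cdot F_0}$ and contradicting semi-stability of $F_0$.

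\emph{Main obstacle.} The remaining case is $\alpha_n\to 0$ with $\|g_n F_0\|\to\infty$: here $F_\infty$ may lie in a portion of $\overline{(\C^\times\times G)\cdot F_0}$ that is not of the form $\beta\cdot \overline{G\cdot F_0}$ for any $\beta\in\C^\times$, so unstable limits are a priori possible. To rule this out I would use that $F_\infty$ is a critical point of $\potential{}$, not merely an arbitrary closure point: substituting $\nabla\potential{}(F_\infty)=0$ (via \Cref{prop:gradient}) and analyzing the direction in $\SL(d)\times S((\C^\times)^n)$ along which $g_n$ escapes to infinity should yield a rescaling of $F_\infty$ lying in $\overline{G\cdot F_0}$, reducing to the first case. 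Alternatively, this case can be handled by appealing to the Kempf--Ness theorem via the momentum-map interpretation of $\potential{}$ from \Cref{rem:squared norm}.
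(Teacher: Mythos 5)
Your finite-time argument, your treatment of the cases where the scalar converges in $\C^\times$ or blows up, and your Taylor-expansion argument that $0$ is a strict local maximum (this is exactly \Cref{lem:zero is a max}) all match the paper's proof. The genuine gap is the case you yourself flag and then leave open: $\alpha_n \to 0$ with $\|g_n F_0\| \to \infty$. Knowing $F_\infty \neq 0$ does not help there: if the scalar degenerates, then for every positive-degree $G$-invariant homogeneous polynomial $q$ one has $q(F_t) = \alpha_t^{\deg q}\, q(g_t\cdot F_0) = \alpha_t^{\deg q}\, q(F_0) \to 0$, so every such invariant vanishes at $F_\infty$; the danger is precisely a \emph{nonzero unstable} limit, which none of your completed cases excludes. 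Neither of your suggested remedies is carried out: the first (``analyzing the direction along which $g_n$ escapes \dots should yield a rescaling of $F_\infty$ lying in $\overline{G \cdot F_0}$'') is a hope rather than an argument---criticality of $F_\infty$ together with \Cref{prop:gradient} does not obviously produce such a rescaling---and the second is an appeal to Kempf--Ness/momentum-map machinery, which is essentially the statement to be proved and which the paper deliberately avoids invoking at this point.

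For comparison, the paper closes exactly this case by a different (and admittedly terse) move: it asserts that if the scalar term goes to zero then $F_\infty$ must be the zero matrix, and then rules that out using \Cref{lem:zero is a max} together with \Cref{prop:zero implies unstable} (which forces $F_0 \neq 0$). In other words, the paper treats ``scalar $\to 0$'' and ``$F_\infty \neq 0$'' as incompatible---precisely the implication you (not unreasonably) decline to assert. So you have assembled the same ingredients as the paper, including the local-maximum lemma, but stop short of the one step that does the real work; as written, the proposal does not establish semi-stability of $F_\infty$, and to complete it you must either justify that implication or genuinely execute one of the two strategies you sketch.
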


\begin{proof}
	We just saw that the gradient flow of $F_0$ is tangent to the $\left(\C^\times \times G\right)$-orbit of $F_0$, so we see that $F_t \in \left(\C^\times \times G\right)\cdot F_0$ for all $t \in [0,\infty)$. Since scaling by a nonzero number doesn't affect semistability, \Cref{prop:semistable orbit} implies that $F_t$ is semi-stable for all $t \in [0,\infty)$.
	
	As long as the scalar term stays away from zero, $F_\infty$ is a non-zero scalar multiple of an element of $\overline{G \cdot F_0}$, and hence \Cref{prop:semistable orbit} implies that $F_\infty$ is also semi-stable. 
	
	Therefore, we need only rule out the possibility that the scalar term goes to zero. In that case, $F_\infty$ is the zero matrix. As we show in \Cref{lem:zero is a max} below, the zero matrix is a local maximum of $\potential{}$, so nothing flows to it except the zero matrix itself. But $F_0$ has property $\Sr$, so cannot be the zero matrix by \Cref{prop:zero implies unstable}. Therefore, the scalar term cannot go to zero, and the proof is complete.
\end{proof}

\begin{lem}\label{lem:zero is a max}
	The zero matrix is a local maximum of $\potential{}$.
\end{lem}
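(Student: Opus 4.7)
The plan is to show that $F=0$ is a critical point of $\potential{}$ with strictly negative-definite Hessian, which forces it to be a strict local maximum. Both facts fall out of a direct second-order Taylor expansion around the origin, so no serious geometry is needed.

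First I would confirm that $0$ is a critical point: the gradient formula in \Cref{prop:gradient} has $F$ as an overall left factor, so $\nabla \potential{}(0) = 0$ trivially. Next I would expand each of the two summands in the definition of $\potential{}$. Using $\|FF^\ast - \Id_d\|_{\text{Fr}}^2 = \tr((FF^\ast)^2) - 2\tr(FF^\ast) + d$ and $\tr(FF^\ast) = \|F\|_{\text{Fr}}^2 = \sum_{i=1}^n \|f_i\|^2$, the first term contributes constant part $d$, quadratic part $-2\|F\|_{\text{Fr}}^2$, and a quartic remainder $\tr((FF^\ast)^2)$. Expanding the squares in the second summand gives constant part $n/4$, quadratic part $-\tfrac{1}{2}\sum_{i=1}^n \|f_i\|^2/r_i$, and a quartic remainder $\tfrac{1}{4}\sum_i \|f_i\|^4/r_i^2$.

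Adding these,
\[
\potential{}(F) = d + \tfrac{n}{4} - \sum_{i=1}^n \left(2 + \tfrac{1}{2 r_i}\right)\|f_i\|^2 + R(F),
\]
where $R(F) = O(\|F\|_{\text{Fr}}^4)$. Since each coefficient $2 + \tfrac{1}{2r_i}$ is strictly positive, the quadratic form appearing here is strictly negative-definite on $\K^{d\times n}$ regarded as a real inner product space. Therefore there exist constants $c>0$ and $\delta>0$ so that $\potential{}(F) - \potential{}(0) \leq -c\|F\|_{\text{Fr}}^2 + |R(F)| \leq -\tfrac{c}{2}\|F\|_{\text{Fr}}^2$ whenever $0 < \|F\|_{\text{Fr}} < \delta$, which shows $0$ is a strict local maximum.

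There is no real obstacle here; the only thing to watch is the bookkeeping of real versus complex entries when claiming the quadratic form is negative-definite. This is automatic because $\|F\|_{\text{Fr}}^2 = \sum_i \|f_i\|^2$ coincides with the standard Euclidean squared norm on $\K^{d\times n}$ whether $\K=\R$ or $\C$, so the quadratic form above dominates $-c\|F\|_{\text{Fr}}^2$ uniformly in either field.
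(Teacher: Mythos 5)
Your proof is correct and follows essentially the same route as the paper: both come down to the second derivative test at $F=0$, where the quadratic part of $\potential{}$ is negative definite (your coefficients $-\left(2+\tfrac{1}{2r_i}\right)$ are exactly half the paper's Hessian entries $-4-\tfrac{1}{r_i}$). The only cosmetic difference is that you expand $\potential{}$ directly from its definition with an explicit quartic remainder bound, whereas the paper reads the Hessian at $0$ off the gradient formula of \Cref{prop:gradient}.
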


\begin{proof}
	We prove this using the second derivative test. If we write $F = \begin{bmatrix} x_{ij} + \sqrt{-1}\, y_{ij}\end{bmatrix}_{ij}$ and we write the Hessian of $\potential{}$ as a square matrix with respect to the $x_{ij}$ and $y_{ij}$ real coordinates, then by inspection of the gradient formula from \Cref{prop:gradient} it is clear that the Hessian at $0$ of $\potential{}$ is a diagonal matrix with entries of the form $-4 - \frac{1}{r_i}$ on the diagonal. This is certainly negative definite, so $0$ is a local maximum of $\potential{}$.
\end{proof}

In fact, we can leverage this result to show that the negative gradient flow of $\potential{}$ cannot shrink any columns to the zero vector. We will prove this by showing that the negative gradient flow of $\potential{}$ causes the norm of the $i$th column to increase in a small enough neighborhood of any critical point of $\potential{}$ which has zero $i$th column.

To that end, define $g_i : \C^{d \times n} \to \R$ by $F \mapsto \|f_i\|^2$, where $f_i$ is the $i$th column of $F$. 

\begin{lem}\label{lem:cp with zero column}
	Suppose $\widehat{F} \in \C^{d \times n}$ is a critical point of $\potential{}$ whose $i$th column is the zero vector. Then there exists $\epsilon > 0$ so that, for any $F$ in an $\epsilon$-neighborhood of $\widehat{F}$,
	\[
		\langle \nabla \potential{}(F) , \nabla g_i(F) \rangle \leq 0,
	\] 
	with equality if and only if the $i$th column of $F$ is zero.
\end{lem}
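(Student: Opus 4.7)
The plan is to compute $\langle \nabla \potential{}(F), \nabla g_i(F) \rangle$ in closed form, Taylor-expand it around $\widehat{F}$, and show that the resulting leading quadratic form in the perturbation of the $i$th column is strictly negative definite.

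First I would combine \Cref{prop:gradient} with the observation that $\nabla g_i(F)$ is the matrix whose $i$th column equals $2 f_i$ and whose remaining columns vanish. A direct calculation with the real Frobenius inner product $\langle A, B\rangle = \operatorname{Re}(\tr(A^\ast B))$ then yields
\[
\langle \nabla \potential{}(F), \nabla g_i(F) \rangle = 8\|F^\ast f_i\|^2 - \left(8 + \frac{2}{r_i}\right)\|f_i\|^2 + \frac{2\|f_i\|^4}{r_i^2}.
\]
The right-hand side vanishes identically when $f_i = 0$, which handles the equality case. For $F$ in a small neighborhood of $\widehat{F}$ write $F = \widehat{F} + \delta$; since $\widehat{f}_i = 0$ we have $f_i = \delta_i$, and expanding $\|F^\ast f_i\|^2 = \delta_i^\ast F F^\ast \delta_i$ using $FF^\ast = \widehat{M} + O(\|\delta\|)$ (with $\widehat{M} \coloneqq \widehat{F}\widehat{F}^\ast$) gives
\[
\langle \nabla \potential{}(F), \nabla g_i(F) \rangle = \delta_i^\ast \bigl[8\widehat{M} - (8 + 2/r_i)\Id_d\bigr] \delta_i + O(\|\delta\|^3).
\]

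The heart of the argument—and the main obstacle—is to prove that the Hermitian operator $8\widehat{M} - (8 + 2/r_i)\Id_d$ is strictly negative definite; equivalently, that every eigenvalue of $\widehat{M}$ is strictly less than $1 + 1/(4 r_i)$. By \Cref{prop:gradient}, the critical-point condition $\widehat{F}[4\widehat{F}^\ast\widehat{F} + \widehat{D}] = 0$ forces each nonzero column $\widehat{f}_j$ to be an eigenvector of $\widehat{M}$ with eigenvalue $\mu_j = 1 + \tfrac{1}{4 r_j} - \tfrac{\|\widehat{f}_j\|^2}{4 r_j^2}$, so $\mu_j < 1 + 1/(4 r_j)$ trivially (using that $\widehat{f}_j \neq 0$). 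To upgrade this to $\mu_j < 1 + 1/(4 r_i)$ uniformly in $j$, I would partition the nonzero columns into groups $S_k$ sharing a common eigenvalue of $\widehat{M}$—within each group the columns are either mutually parallel or collectively span the associated eigenspace—solve the critical-point equations on each piece to write the corresponding eigenvalue of $\widehat{M}$ explicitly as $(R_1^{(k)} + 4 R_2^{(k)})/(d_k + 4 R_2^{(k)})$, where $d_k$ is the dimension of the eigenspace and $R_1^{(k)} \coloneqq \sum_{j \in S_k} r_j$, $R_2^{(k)} \coloneqq \sum_{j \in S_k} r_j^2$, and then invoke admissibility of $\vec{r}$—namely $\sum_j r_j = d$ together with the majorization bounds $\sum_{\ell=1}^k r_{(\ell)} \leq k$—to close the inequality.

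Once negative definiteness is in hand, continuity produces $\epsilon > 0$ on which $8M - (8 + 2/r_i)\Id_d$ is uniformly negative definite, where $M = FF^\ast - f_i f_i^\ast$. The quadratic term in the formula above is then strictly negative for $f_i \neq 0$ and, for $\epsilon$ sufficiently small, dominates the nonnegative quartic correction $\tfrac{2\|f_i\|^4}{r_i^2}$, giving the desired strict inequality except exactly when $f_i = 0$.
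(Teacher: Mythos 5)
Your closed-form computation of $\langle\nabla\potential{}(F),\nabla g_i(F)\rangle$ is correct, and your reduction of the lemma to the spectral condition ``every eigenvalue of $\widehat{M}=\widehat{F}\widehat{F}^\ast$ is strictly less than $1+\tfrac{1}{4r_i}$'' is exactly the right way to organize the problem---it is in fact more careful than the paper's own argument, which expands $\nabla\potential{}$ at the zero matrix and thereby drops the $8\,\delta_i^\ast\widehat{M}\delta_i$ contribution that you retain. The gap is precisely in the step you flag as the heart of the argument: the claim that admissibility (the trace condition plus majorization) forces each block eigenvalue $\lambda_k=(R_1^{(k)}+4R_2^{(k)})/(d_k+4R_2^{(k)})$ below $1+\tfrac{1}{4r_i}$ for the index $i$ of the zero column. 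This cannot be proved, because it is false: the positivity of the norms in a block only gives $\lambda_k<1+\tfrac{1}{4r_j}$ for $j\in S_k$, and nothing ties this to an $r_i$ attached to a \emph{zero} column, which may be much larger than every $r_j$ in the block. Concretely, take $d=3$, $n=14$, $\vec{r}=(1,\tfrac12,\tfrac18,\dots,\tfrac18)$ (twelve copies of $\tfrac18$), which is admissible and rational, and let $\widehat{F}$ have its first two columns zero and columns $3,\dots,14$ all equal to $\sqrt{3/28}\,e_1$. By \Cref{prop:gradient} this $\widehat{F}$ is a critical point of $\potential{}$, with $\widehat{M}=\tfrac97\,e_1e_1^\ast$ and $\tfrac97>\tfrac54=1+\tfrac{1}{4r_1}$. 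Perturbing only the first column to $f_1=\delta e_1$ gives, by your own formula,
\[
\langle\nabla\potential{}(F),\nabla g_1(F)\rangle=8\delta^2\bigl(\tfrac97+\delta^2\bigr)-10\delta^2+2\delta^4=\tfrac27\delta^2+10\delta^4>0
\]
for every $\delta\neq0$, however small.

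So the difficulty is not merely that your intended inequality resists proof from admissibility: the example violates the conclusion of the lemma itself for an admissible rational $\vec{r}$, so no argument can close the gap without additional hypotheses. (Your scheme does go through whenever the spectral condition holds, e.g.\ in the equal-norm case $r_j\equiv d/n$, or more generally whenever $r_i\leq\max_{j\in S_k}r_j$ for every block $S_k$ of $\widehat{F}$.) In short, your computation is the correct one, and it exposes that the operator $8\widehat{M}-(8+\tfrac2{r_i})\Id_d$ need not be negative semidefinite at a critical point with zero $i$th column; the final ``close the inequality via admissibility'' step of your plan would fail, and the statement as given needs extra assumptions or a different argument---a problem which the paper's proof, by linearizing at $0$ rather than at $\widehat{F}$, does not address either.
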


\begin{proof}
	First, $\nabla g_i(F)$ is the $d \times n$ matrix with all zero columns except the $i$th, which is $2f_i$. Therefore, for any $Y \in \C^{d \times n}$, the (real) Frobenius inner product
	\begin{equation}\label{eq:dot with grad g_i}
		\langle Y, \nabla g_i(F) \rangle = \operatorname{Re}(Y^\ast \nabla g_i(F)) = 2\operatorname{Re}(y_i^\ast f_i) = 2\langle y_i, f_i\rangle.
	\end{equation}
    This implies $\langle \nabla \potential{}(F), \nabla g_i(F) \rangle = 0$ whenever the $i$th column of $F$ is zero.

	Now, we consider $F$ in a neighborhood of $\widehat{F}$ and write $F = \widehat{F} + X$ for small $X \in \C^{d \times n}$. Then, since $\widehat{F}$ is a critical point of $\potential{}$,
	\[
		\nabla \potential{}(F) = \nabla \potential{}(\widehat{F} + X) = \nabla \potential{}(X).
	\]
	Since $X$ is in a small neighborhood of the zero matrix, expanding $\nabla \potential{}$ at 0 yields
	\[
		\nabla \potential{}(X) = \nabla \potential{}(0 + X) = \nabla \potential{}(0) + H(0) X + O(\|X\|^2) = H(0) X + O(\|X\|^2),
	\]
	where $H(0)$ is the Hessian of $\potential{}$ at 0. We know from the proof of \Cref{lem:cp with zero column} that, focusing on the $i$th column, $H(0)$ simply scales the $i$th column of $X$ by $-4 - \frac{1}{r_i}$. Since $F = \widehat{F} + X$ and the $i$th column of $\widehat{F}$ is zero, the $i$th column of $X$ is the same as the $i$th column of $F$. Therefore, 
	\[
		\left(\nabla \potential{}(F)\right)_i = \left(\nabla \potential{}(X)\right)_i \approx \left(-4-\frac{1}{r_i}\right)x_i = \left(-4-\frac{1}{r_i}\right)f_i
	\]
	to first order. 
	
	Therefore, substituting $\nabla \potential{}(F)$ into \eqref{eq:dot with grad g_i}, we see that
	\[
		\langle \nabla \potential{}(F) , \nabla g_i(F) \rangle = 2 \langle \left(\nabla \potential{}(F)\right)_i, f_i \rangle \approx 2\left(-4-\frac{1}{r_i}\right)\|f_i\|^2.
	\]
	This is strictly negative whenever $f_i$ is nonzero and $F$ is close enough to $\widehat{F}$.
\end{proof}

\begin{prop}\label{prop:no new zeros}
	Suppose $F_0 \in \C^{d \times n}$ and define $F_t$ for $t \in [0,\infty]$ as in \Cref{prop:flow preserves property}. If the $i$th column of $F_t$ is zero for any $t \in [0,\infty]$, then the $i$th column of $F_0$ must also be zero.
\end{prop}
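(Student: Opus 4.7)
The plan is to combine two observations: invariance of a coordinate subspace under the gradient vector field (which handles finite $t$ via ODE uniqueness), and the near-critical-point estimate in \Cref{lem:cp with zero column} (which handles $t=\infty$). Let $V_i \subset \C^{d\times n}$ denote the linear subspace of matrices whose $i$-th column vanishes. The key observation, obtained by reading off the $i$-th column of the formula for $\nabla \potential{}$ in \Cref{prop:gradient}, is that
\[
    \bigl(\nabla \potential{}(F)\bigr)_i = 4FF^\ast f_i + \Bigl(\tfrac{\|f_i\|^2}{r_i^2} - \tfrac{1}{r_i} - 4\Bigr) f_i,
\]
which vanishes identically whenever $f_i = 0$. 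Hence the polynomial vector fields $\pm \nabla \potential{}$ are both tangent to $V_i$ along $V_i$, so $V_i$ is invariant under the flows they generate.

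For finite $T \in [0,\infty)$, suppose $F_T \in V_i$. Define $G_s \coloneqq F_{T-s}$ for $s \in [0,T]$; then $G$ satisfies the reverse gradient ODE $\dot G_s = \nabla \potential{}(G_s)$ with initial condition $G_0 = F_T \in V_i$. Uniqueness of solutions to this smooth ODE, combined with invariance of $V_i$ under $\nabla \potential{}$, forces $G_s \in V_i$ for all $s \in [0,T]$, and in particular $F_0 = G_T \in V_i$.

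For $T = \infty$, suppose instead that $F_\infty \in V_i$. Since $\potential{}$ is a real polynomial on a Euclidean space, $F_\infty$ is a critical point, and \Cref{lem:cp with zero column} furnishes $\epsilon > 0$ so that $\langle \nabla \potential{}(F), \nabla g_i(F)\rangle \le 0$ on the $\epsilon$-ball around $F_\infty$, with equality precisely when $F \in V_i$. Pick $T_0$ so that $F_t$ lies in this ball for all $t \ge T_0$. If $F_t \notin V_i$ for every $t \ge T_0$, then along the flow
\[
    \tfrac{d}{dt} g_i(F_t) = -\langle \nabla \potential{}(F_t), \nabla g_i(F_t)\rangle > 0,
\]
so $g_i(F_t)$ is strictly increasing on $[T_0,\infty)$, contradicting $g_i(F_t) \to g_i(F_\infty) = 0$. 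Therefore $F_T \in V_i$ for some finite $T \ge T_0$, and the previous paragraph concludes.

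The only real subtlety is the finite-time uniqueness step: since $\nabla \potential{}$ is a smooth vector field that leaves the linear subspace $V_i$ invariant, any solution meeting $V_i$ at some time must lie in $V_i$ throughout its maximal interval of existence, in both forward and reverse time. This rules out the possibility that the flow enters $V_i$ at positive time without having started in $V_i$, which is the heart of the argument.
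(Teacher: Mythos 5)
Your proposal is correct and follows essentially the same route as the paper: read off from \Cref{prop:gradient} that a zero $i$th column is preserved by the (positive) gradient vector field, so the negative flow cannot create a zero column in finite time, and then handle $t=\infty$ by combining \Cref{lem:cp with zero column} with the monotonicity of $g_i(F_t)$ near the critical limit point to force the column to vanish at some finite time. Your time-reversal/ODE-uniqueness paragraph just makes explicit the invariance argument the paper states in one line, so there is nothing substantively different to flag.
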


\begin{proof}
	By inspection of the formula from \Cref{prop:gradient}, observe that the $i$th column of $\nabla \potential{}(F)$ is 
	\begin{equation*}\label{eq:ith column grad}
		\left(4 FF^\ast +\left( \frac{\|f_i\|^2}{r_i^2} - \frac{1}{r_i}-4 \right)\Id_d\right) f_i.
	\end{equation*}
    Hence, the \emph{positive} gradient flow of $\potential{}$ preserves zero columns, so the \emph{negative} gradient flow cannot introduce new zero columns in finite time. This proves the result for $t \in [0,\infty)$.
	
	Now, suppose $F_\infty$ has a zero column and choose $t_0$ large enough so that $F_{t}$ satisfies the hypotheses of \Cref{lem:cp with zero column} for all $t \geq t_0$. Then \Cref{lem:cp with zero column} implies that
	\[
		\langle - \nabla \potential{}(F_t), \nabla g_i(F_t) \rangle \geq 0
	\]
	for all $t \geq t_0$, so $g_i(F_t)$ must be non-decreasing with $t$. But we know that $g_i(F_t) \geq 0$ and $0 = g_i(F_\infty) = \lim_{t \to \infty} g_i(F_t)$, so it must be the case that the $i$th column of $F_t$ is zero for all $t \geq t_0$, and hence (by the first part of the proof) that the $i$th column of $F_0$ was already zero.
\end{proof}

\subsection{Non-Minimizing Critical Points Do Not Have Property \texorpdfstring{$\Sr$}{Sr}}

Finally, we establish Condition 3 in \Cref{Sr conditions}. We begin with some necessary definitions. 

\begin{definition}\label{def:orthodecomposable}
	A frame $f_1, \dots , f_n \in \K^d$ is \emph{orthodecomposable} if there exists a nontrivial partition $P_1, \dots , P_k$ of $\{1, \dots , n\}$ and pairwise orthogonal subspaces $V_1 , \dots , V_k \subset \K^d$ with $\K^d = \bigoplus_{i=1}^k V_i$ so that, for all $j=1, \dots , k$, $\{f_i\}_{i\in P_j}$ is a frame for $V_j$.
\end{definition}

\begin{definition}\label{def:blockwiseENT}
	An orthodecomposable frame $F$ with partition $P_1, \dots , P_k$ is a \emph{blockwise (equal-norm) tight frame} if each $\{f_i\}_{i \in P_j}$ is a (equal-norm) tight frame for its span.
\end{definition}

\begin{prop}\label{prop:critical points}
	$F$ is a critical point of $\potential{}$ if and only if one of the following conditions holds:
	\begin{enumerate}
		\item \label{it:PF} $F \in \PF_d(\vec{r})$,
		\item \label{it:blockwiseTF} $F$ is a blockwise tight frame so that $\frac{1}{r_i} - \frac{\|f_i\|^2}{r_i^2}$ is constant on each block, or
		\item $F$ is the union of a frame satisfying \ref{it:PF} or \ref{it:blockwiseTF} with some zero vectors. 
	\end{enumerate}
	 In particular, the critical points of $\potential{\left(\frac{d}{n}, \dots , \frac{d}{n}\right)}$ are precisely the ENP frames, the blockwise equal-norm tight frames, and the unions of ENP frames or blockwise ENT frames with some zero vectors.
\end{prop}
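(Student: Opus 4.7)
The plan is to read off the critical point equation columnwise from the gradient formula in \Cref{prop:gradient}. Writing $d_i \coloneqq \tfrac{\|f_i\|^2}{r_i^2} - \tfrac{1}{r_i} - 4$, the $i$-th column of $\nabla \potential{}(F) = F\bigl[4F^\ast F + \diag(d_1, \dots, d_n)\bigr]$ works out to $(4 FF^\ast + d_i \Id_d)f_i$, so $F$ is a critical point if and only if every column $f_i$ is either zero or an eigenvector of the frame operator $FF^\ast$ with eigenvalue
\[
    \mu_i \coloneqq 1 + \tfrac{1}{4}\!\left(\tfrac{1}{r_i} - \tfrac{\|f_i\|^2}{r_i^2}\right).
\]
This algebraic reformulation drives both directions of the proof.

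For the forward direction, I would decompose $\K^d$ orthogonally into eigenspaces of the self-adjoint operator $FF^\ast$, writing $\K^d = V_0 \oplus V_1 \oplus \dots \oplus V_k$ with $V_0 = \ker FF^\ast$ and $V_1, \dots, V_k$ the distinct nonzero eigenspaces having eigenvalues $\mu_1, \dots, \mu_k$. Setting $P_0 \coloneqq \{i : f_i = 0\}$ and $P_j \coloneqq \{i : f_i \in V_j\}$ for $j = 1, \dots, k$, the eigenvector condition places each index in exactly one block, and within a block $\mu_i = \mu_j$ is forced to be constant; hence $\tfrac{1}{r_i} - \tfrac{\|f_i\|^2}{r_i^2} = 4(\mu_j - 1)$ is constant on $P_j$. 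Two further facts finish the forward direction: $\{f_i\}_{i \in P_j}$ spans $V_j$ (any $v \in V_j$ perpendicular to every $f_i$ in $P_j$ is perpendicular to every column of $F$ by block orthogonality, and therefore lies in $V_0 \cap V_j = \{0\}$), and $\{f_i\}_{i \in P_j}$ is $\mu_j$-tight for $V_j$ (restrict $FF^\ast = \sum_i f_i f_i^\ast$ to $V_j$ and use $\langle v, f_{i'}\rangle = 0$ for $v \in V_j$, $f_{i'} \in V_{j'}$, $j' \neq j$ to kill cross-block terms). Case \ref{it:PF} is the subcase with a single nonzero block having $\mu_1 = 1$, which forces $\|f_i\|^2 = r_i$; all other structures with no zero columns give case \ref{it:blockwiseTF}; and any nonempty $P_0$ produces case (3).

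For the converse I would run the argument in reverse: given $F$ of the claimed form, block orthogonality reduces $FF^\ast|_{V_j}$ to the frame operator of $\{f_i\}_{i \in P_j}$, which by tightness equals $\mu_j \Id_{V_j}$, so every nonzero $f_i$ is an eigenvector of $FF^\ast$ with eigenvalue $\mu_j$, and constancy of $\tfrac{1}{r_i} - \tfrac{\|f_i\|^2}{r_i^2}$ on the block is exactly the compatibility needed for $\mu_j$ to agree with the required $\mu_i$. The equal-norm statement then follows by specializing to $\vec{r} = \bigl(\tfrac{d}{n}, \dots, \tfrac{d}{n}\bigr)$: constancy of $\tfrac{n}{d} - \tfrac{n^2\|f_i\|^2}{d^2}$ on a block collapses to constancy of $\|f_i\|^2$ on that block, so blockwise tight frames become blockwise equal-norm tight frames and Parseval frames become ENP frames. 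The main subtlety I expect throughout is the bookkeeping on the block tight frame constants: once the critical-point equation is imposed, the $\mu_j$ are not free parameters but are forced by the relation $\mu_j = 1 + \tfrac{1}{4}\bigl(\tfrac{1}{r_i} - \tfrac{\|f_i\|^2}{r_i^2}\bigr)$ on block $j$, and this implicit compatibility must be tracked carefully in both directions.
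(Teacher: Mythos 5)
Your forward direction is essentially the paper's argument: read the critical-point equation columnwise from \Cref{prop:gradient}, conclude that each nonzero column is an eigenvector of the Hermitian operator $FF^\ast$ with eigenvalue $\mu_i = 1+\tfrac14\bigl(\tfrac{1}{r_i}-\tfrac{\|f_i\|^2}{r_i^2}\bigr)$, and use orthogonality of distinct eigenspaces to produce the orthodecomposition with the stated constancy on blocks. If anything you are more complete than the paper, which records only this ``only if'' direction and leaves implicit both the fact that each block spans its eigenspace and the fact that it is tight for it; your spanning argument (a vector in $V_j$ orthogonal to its own block is orthogonal to all columns, hence lies in $\ker FF^\ast \cap V_j = \{0\}$) and your restriction of $FF^\ast=\sum_i f_i f_i^\ast$ to $V_j$ are correct ways to fill those gaps.

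The step that would fail is in your converse, where you assert that constancy of $\tfrac{1}{r_i}-\tfrac{\|f_i\|^2}{r_i^2}$ on each block is ``exactly the compatibility needed.'' It is not: the tight-frame constant $A_j$ of block $j$ is determined by the vectors themselves via $A_j \dim V_j = \sum_{i \in P_j}\|f_i\|^2$, and criticality requires the separate identity $A_j = \mu_j$, which constancy alone does not force. Concretely, take $d=2$, $n=4$, $\vec{r}=(\tfrac12,\tfrac12,\tfrac12,\tfrac12)$ and $f_1=f_2=e_1$, $f_3=f_4=e_2$ (standard basis vectors): this is blockwise tight with $\tfrac{1}{r_i}-\tfrac{\|f_i\|^2}{r_i^2}\equiv -2$ on each block, yet $FF^\ast = 2\,\Id_d$ while criticality would require the eigenvalue $\tfrac12$, and indeed each column of $\nabla\potential{}(F)$ equals $6f_i \neq 0$. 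So as literally written the ``if'' direction needs the additional hypothesis $A_j=\mu_j$; your closing remark about tracking the block constants points at precisely this issue, but the converse paragraph asserts the wrong implication. In fairness, the paper's statement has the same looseness and its proof addresses only the forward direction---which is the direction used later in the paper---so the substantive part of your proposal is aligned with, and somewhat more careful than, the paper's own proof.
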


\begin{proof}
From \Cref{prop:gradient}, $F$ is a critical point of $\potential{}$ if and only if 
\[
	4F F^\ast F = F\diag \left(4 + \frac{1}{r_1} - \frac{\|f_1\|^2}{r_1^2}, \dots , 4 + \frac{1}{r_n} - \frac{\|f_n\|^2}{r_n^2}\right).
\]
Considered column-wise, this says that
\[
	4FF^\ast f_i = \left( 4 + \frac{1}{r_i} - \frac{\|f_i\|^2}{r_i^2}\right) f_i,
\]
so each non-zero $f_i$ is an eigenvector of the frame operator $FF^\ast$ with eigenvalue $\frac{1}{4}\left( 4 + \frac{1}{r_i} - \frac{\|f_i\|^2}{r_i^2}\right)$. If the frame operator has distinct eigenvalues $\lambda_1 >  \dots > \lambda_k > 0$ with associated eigenspaces $E_1, \dots, E_k$, then we get a partition $P_1, \dots , P_k$ of $\{1, \dots , n\}$ with $i \in P_j$ if and only if $FF^\ast f_i = \lambda_j f_i$. 

Since eigenspaces of Hermitian matrices like $FF^\ast$ are pairwise orthogonal, we see that $F$ must be orthodecomposable with partition $P_1, \dots , P_k$. Moreover, since
\begin{equation}\label{eq:eigenvalue}
	\lambda_j = \frac{1}{4}\left( 4 + \frac{1}{r_i} - \frac{\|f_i\|^2}{r_i^2}\right)
\end{equation}
for any $i \in P_j$, it follows that $\frac{1}{r_i} - \frac{\|f_i\|^2}{r_i^2}$ is constant on $\{f_i\}_{i \in P_j}$. 
\end{proof}

We want to see that non-minimizing critical points of $\potential{}$ do not have property $\Sr$. When a critical point has some zero vectors, we can use \Cref{prop:zero implies unstable} to see that it doesn't have property $\Sr$, so we only need to worry about the case when the critical point $F$ is blockwise tight with $\frac{1}{r_i} - \frac{\|f_i\|^2}{r_i^2}$ constant on each block and $F \notin \PF_d(\vec{r})$. First, we find an alternative characterization of such frames:

\begin{lem}\label{lem:non-minimizing critical points overweight some subspace}
	Suppose $F$ is a critical point of $\potential{}$ which is blockwise tight with $\frac{1}{r_i} - \frac{\|f_i\|^2}{r_i^2}$ constant on each block and $F \notin \PF_d(\vec{r})$. Let  $P_1, \dots , P_k$ be the partition of $\{1, \dots , n\}$ and $V_1 , \dots , V_k \subset \K^d$ the pairwise orthogonal subspaces coming from the orthodecomposability of $F$. Then there exists $\ell$ so that
	\[
		\sum_{i \in P_\ell} r_i > \dim(V_\ell).
	\]
\end{lem}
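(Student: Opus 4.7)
The plan is to argue by contradiction: assume $\sum_{i \in P_\ell} r_i \leq \dim(V_\ell)$ for every $\ell$ and derive that $F$ lies in $\PF_d(\vec{r})$, contradicting the hypothesis. The key observation is that orthodecomposability forces $\sum_\ell \dim(V_\ell) = d$ (since $\K^d = \bigoplus_\ell V_\ell$) and admissibility forces $\sum_i r_i = d$. Summing the assumed block inequalities yields
\[
d \;=\; \sum_i r_i \;=\; \sum_\ell \sum_{i \in P_\ell} r_i \;\leq\; \sum_\ell \dim(V_\ell) \;=\; d,
\]
so equality must hold at every step: $\sum_{i \in P_\ell} r_i = \dim(V_\ell)$ for each $\ell$.

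Next, I would extract two identities per block from the hypotheses. Writing $d_\ell \coloneqq \dim(V_\ell)$ and letting $\lambda_\ell$ denote the eigenvalue of $FF^\ast$ on $V_\ell$, the critical-point equation \eqref{eq:eigenvalue} from the proof of \Cref{prop:critical points} rearranges, for each $i \in P_\ell$, to
\[
\|f_i\|^2 \;=\; r_i + 4(1-\lambda_\ell)\, r_i^2.
\]
On the other hand, blockwise tightness on $V_\ell$ gives $\sum_{i \in P_\ell} f_i f_i^\ast = \lambda_\ell P_{V_\ell}$ (where $P_{V_\ell}$ is orthogonal projection onto $V_\ell$), and taking traces yields $\sum_{i \in P_\ell} \|f_i\|^2 = \lambda_\ell d_\ell$.

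Combining these by summing the first identity over $i \in P_\ell$ and using $\sum_{i \in P_\ell} r_i = d_\ell$, I get
\[
\lambda_\ell d_\ell \;=\; d_\ell + 4(1-\lambda_\ell) \sum_{i \in P_\ell} r_i^2,
\]
which simplifies to $(\lambda_\ell - 1)\bigl(d_\ell + 4 \sum_{i \in P_\ell} r_i^2\bigr) = 0$. Since $d_\ell > 0$ and each $r_i > 0$, the second factor is strictly positive, forcing $\lambda_\ell = 1$. Feeding $\lambda_\ell = 1$ back into the rearranged critical-point equation gives $\|f_i\|^2 = r_i$ for every $i$, and $FF^\ast = \sum_\ell \lambda_\ell P_{V_\ell} = \Id_d$ confirms that $F$ is a Parseval frame. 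Hence $F \in \PF_d(\vec{r})$, the desired contradiction.

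There is no serious obstacle here; the proof is essentially a bookkeeping argument exploiting the tight coupling between admissibility ($\sum r_i = d$) and orthodecomposability ($\sum d_\ell = d$). The one place requiring minor care is the algebra that forces $\lambda_\ell = 1$: one must check that the positivity of $d_\ell$ and the $r_i$ rules out the possibility of a nontrivial $\lambda_\ell$, but this is immediate.
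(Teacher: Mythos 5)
Your proof is correct and follows essentially the same route as the paper: both reduce to the case $\sum_{i \in P_\ell} r_i = \dim(V_\ell)$ for every block (via admissibility $\sum_i r_i = d$ and $\sum_\ell \dim(V_\ell) = d$), then use the critical-point eigenvalue identity together with the trace of the blockwise tightness relation to force each $\lambda_\ell = 1$, concluding $F \in \PF_d(\vec{r})$, a contradiction. The only difference is cosmetic---you phrase it as a direct contradiction while the paper phrases it as a dichotomy and rules out the all-equality branch.
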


\begin{proof}
	For each $j=1, \dots , k$, let $d_j = \dim(V_j)$. Up to permuting columns and changing basis, we can write $F$ in the block-diagonal form
	\[
		\begin{bmatrix} F_1 & 0 & \dots & 0 \\ 0 & F_2 & \dots & 0 \\ \vdots & \vdots & \ddots & \vdots \\ 0 & 0 & \dots & F_k \end{bmatrix},
	\]
	where $F_j$ is $d_j \times n_j$. Note that $d_1 + \dots + d_k = d$ and $n_1 + \dots + n_k = n$. 
	
	Since $\vec{r}$ satisfies~\eqref{eq:1-admissibility},
	\[
		\sum_{j=1}^k \sum_{i \in P_j} r_i = \sum_{i=1}^n r_i = d = \sum_{j=1}^k d_j,
	\]
	so either $\sum_{i \in P_j} r_i = d_j$ for all $j=1, \dots , k$, or there exists some $\ell$ so that $\sum_{i \in P_\ell} r_i > d_\ell$. The latter is our desired conclusion, so we just need to rule out the former.
	
	To that end, assume $\sum_{i \in P_j} r_i = d_j$ for all $j=1, \dots , k$. Pick an arbitrary $j$. $F$ is a blockwise tight critical point with $\frac{1}{r_i} - \frac{\|f_i\|^2}{r_i^2}$ constant on each block, so we know from~\eqref{eq:eigenvalue} that, for each $i \in P_j$,
	\begin{equation}\label{eq:ri in terms of lambda}
		\|f_i\|^2 = r_i\left(1+4r_i(1-\lambda_j)\right).
	\end{equation}
	Also, the proof of \Cref{prop:critical points} implies that
	\[
		F_jF_j^\ast = \lambda_j \Id_{d_j}.
	\]
	Hence,
	\[
		d_j \lambda_j = \tr(\lambda_j \Id_{d_j}) = \tr(F_j F_j^\ast) = \tr(F_j^\ast F_j) = \sum_{i \in P_j} \|f_i\|^2 = \sum_{i \in P_j} r_i\left(1+4r_i(1-\lambda_j)\right) = d_j + 4(1-\lambda_j) \sum_{i \in P_j}r_i^2,
	\]
	or, after simplification
	\[
		d_j(1-\lambda_j) = -4(1-\lambda_j) \sum_{i \in P_j} r_i^2.
	\]
	Since $d_j > 0$ and $\sum_{i \in P_j} r_i^2 > 0$, this is impossible unless $\lambda_j = 1$. Since our choice of $j$ was arbitrary, this is true for all $j$, meaning that $FF^\ast = \Id_d$ and, from \eqref{eq:ri in terms of lambda}, $\|f_i\|^2 = r_i$ for all $i=1,\dots , n$. In other words, $F \in \PF_d(\vec{r})$, which is precisely the condition ruled out in the hypotheses of the lemma. 
	
	So we conclude that $\sum_{i \in P_j} r_i$ cannot be equal to $d_j$ for all $j=1, \dots , k$, so there exists $\ell$ with $\sum_{i \in P_\ell} r_i > d_\ell$.
\end{proof}

With this characterization of the non-minimizing critical points of $\potential{}$, we can now show that they do not have property $\Sr$ since they satisfy the hypotheses of the following proposition.

\begin{prop}\label{prop:orthodecomposable unstable}
	Suppose $F \in \C^{d \times n}$ so that there is some subspace $V \subset \C^d$ so that $f_{i_1}, \dots , f_{i_{n_1}} \in V$ and $\sum_{j=1}^{n_1} r_{i_j} > \dim(V)$. Then $F$ does not have property $\Sr$.
\end{prop}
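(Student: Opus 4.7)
The plan is to apply the Hilbert--Mumford criterion (\Cref{thm:hilbert-mumford}) by exhibiting an explicit one-parameter subgroup $\lambda\colon\C^\times\to G$ under which $\lim_{t\to 0}\lambda(t)\cdot F=0$. The underlying intuition is GIT-theoretic: if a subspace $V$ of dimension $k$ carries ``too much'' frame weight $R_V \coloneqq \sum_{j=1}^{n_1} r_{i_j}>k$, then the $\SL(d)$ factor can shrink the $V$-direction faster than its orthogonal complement expands, and this differential shrinking can be balanced against a suitable diagonal rescaling of the columns.

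After a unitary change of basis, I would assume $V=\mathrm{span}(e_1,\dots,e_k)$, so that the columns indexed by $I_V\coloneqq\{i_1,\dots,i_{n_1}\}$ have zero entries in rows $k+1,\dots,d$. I would then seek $\lambda(t)=(A(t),D(t))$ of the form
\[
A(t)=\diag(t^{u_+},\dots,t^{u_+},t^{u_-},\dots,t^{u_-}),\qquad D(t)=\diag(t^{w(1)},\dots,t^{w(n)}),
\]
with $k$ copies of $t^{u_+}$ and with the \emph{normalized} weights $\tilde w(j)\coloneqq w(j)S/s_j$ taking only two values, $\tilde w_V$ for $j\in I_V$ and $\tilde w_{V^c}$ otherwise. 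By the formula used in the proof of \Cref{prop:full spark implies semistable}, the $(i,j)$-entry of $\lambda(t)\cdot F$ becomes $t^{u(i)-\tilde w(j)}f_{ij}$. Using $s_j\propto r_j$, the two trace-zero requirements reduce to
\[
ku_++(d-k)u_-=0,\qquad \tilde w_V R_V+\tilde w_{V^c}R_{V^c}=0,
\]
with $R_{V^c}=d-R_V$, and the positivity of the exponents at every potentially nonzero entry reduces to the three inequalities $u_+>\tilde w_V$, $u_+>\tilde w_{V^c}$, $u_->\tilde w_{V^c}$.

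I would then solve this elementary linear system: eliminating $u_-$ and $\tilde w_{V^c}$ (with $u_+,\tilde w_V>0$, forcing $u_-,\tilde w_{V^c}<0$), a rational feasible point exists exactly when
\[
\tilde w_V<u_+<\tilde w_V\cdot\frac{R_V(d-k)}{kR_{V^c}},
\]
and the interval is nonempty iff $R_V(d-k)>kR_{V^c}$, which (using $R_V+R_{V^c}=d$) is equivalent to the hypothesis $R_V>k$. The last step is to clear denominators by multiplying all weights by a large common integer, ensuring both that $u_\pm,\tilde w_V,\tilde w_{V^c}$ are integers \emph{and} that each $w(j)=\tilde w(j)\,s_j/S$ is integral, giving a genuine algebraic one-parameter subgroup; \Cref{thm:hilbert-mumford} then concludes that $F$ is unstable, so it does not have property $\Sr$. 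I expect the main (minor) obstacle to be precisely this integrality bookkeeping at the end, since it requires the scaling to simultaneously discretize two parallel families of rational weights; the heart of the argument is the clean equivalence between feasibility of the positivity inequalities and the given subspace-overload condition $R_V>k$.
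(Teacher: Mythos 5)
Your proposal is correct and is essentially the paper's own argument: the published proof likewise applies the Hilbert--Mumford criterion to a one-parameter subgroup whose row weights take two values (on $V$ and its complement) and whose normalized column weights $w(j)S/s_j$ take two values (on the columns lying in $V$ and the rest), except that instead of arguing feasibility of your linear system abstractly it writes down one explicit solution, namely $u_+=(d-k)\|\vec{s}\|_1 S$, $u_-=-k\|\vec{s}\|_1 S$, $\tilde w_V=d\widehat{S}_1 S$, $\tilde w_{V^c}=-dS_1 S$ with $S_1=\sum_{j\in I_V}s_j$ and $\widehat{S}_1=\|\vec{s}\|_1-S_1$, whose exponent positivity reduces to exactly your condition $dS_1>k\|\vec{s}\|_1$, i.e.\ $R_V>k$. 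The only caveats are minor: your closed-form interval implicitly assumes $k\geq 1$ and $R_{V^c}>0$ (in the degenerate cases $\dim V=0$ or all columns in $V$ the same ansatz still works, with $u_+$ vacuous or $\tilde w_V=0$ forced), and the integrality bookkeeping you flag is indeed routine, e.g.\ by scaling so that the normalized weights are integer multiples of $S$.
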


\begin{proof}
	Suppose $F$ is such a frame. Let $d_1 = \dim(V)$. The hypothesis 
	\[
		\sum_{j=1}^{n_1} r_{i_j} > d_1 = \frac{d_1}{d} \sum_{i=1}^n r_i
	\]
	is equivalent to
	\begin{equation}\label{eq:unstable inequality}
		\sum_{j=1}^{n_1} s_{i_j} > \frac{d_1}{d} \sum_{i=1}^n s_i.
	\end{equation}
	
	Up to permuting columns, we may as well assume that $i_j = j$ for each $j=1, \dots , n_1$, and hence, up to a change of basis, that we can represent $F$ by the block upper-triangular matrix 
	\[
		F = \begin{bmatrix} F_1 & F_2 \\ 0 & F_3 \end{bmatrix},
	\]
	where $F_1$ is $d_1 \times n_1$, $F_2$ is $d_1 \times (n-n_1)$, and $F_3$ is $(d-d_1) \times (n-n_1)$. 
	
	Note that $\|\vec{s} \|_1 = \sum_{i=1}^n s_i$ and define
	\[
		S_1 \coloneqq \sum_{i=1}^{d_1} s_i \quad \text{and} \quad \widehat{S}_1 \coloneqq \sum_{i=d_1+1}^{n} s_i = \|\vec{s}\|_1 - S_1.
	\]
	Using this notation and multiplying both sides of \eqref{eq:unstable inequality} by $d$ yields
	\begin{equation}\label{eq:unstable inequality v2}
		d S_1 > d_1 \|\vec{s}\|_1.
	\end{equation}
	
	Consider the 1-parameter subgroup $\lambda : \C^\times \to G$ given by
	\[
		\lambda(t) = \left(\begin{bmatrix} t^{(d-d_1)\|\vec{s} \|_1 S}\Id_{d_1} & 0 \\ 0 & t^{-d_1\|\vec{s} \|_1 S} \end{bmatrix}, \diag \left(t^{d\widehat{S}_1 s_1} , \dots , t^{d\widehat{S}_1 s_{d_1}}, t^{-dS_1 s_{d_1+1}}, \dots , t^{-dS_1 s_n} \right)\right).
	\]
	This really is in $G$ since 
	\[
		\det \begin{bmatrix} t^{(d-d_1)\|\vec{s} \|_1 S}\Id_{d_1} & 0 \\ 0 & t^{-d_1 \|\vec{s} \|_1 S}\Id_{d-d_1} \end{bmatrix} = t^{d_1(d-d_1)\|\vec{s} \|_1 S-(d-d_1)d_1\|\vec{s} \|_1 S} = t^0 = 1
	\]
	and
	\[
		\det \left(\diag \left(t^{d\widehat{S}_1 s_1} , \dots , t^{d\widehat{S}_1 s_{d_1}}, t^{-dS_1 s_{d_1+1}}, \dots , t^{-dS_1 s_n} \right) \right)= t^{d\widehat{S}_1S_1-dS_1 \widehat{S}_1} = t^0 = 1.
	\]
	Now,
	\[
		\lambda(t) \cdot F = \begin{bmatrix} t^{(d-d_1)\|\vec{s} \|_1 S}\Id_{d_1} & 0 \\ 0 & t^{-d_1\|\vec{s} \|_1  S}\Id_{d-d_1} \end{bmatrix} \begin{bmatrix} F_1 & F_2 \\ 0 & F_3 \end{bmatrix} \begin{bmatrix} t^{-dS \widehat{S}_1} \Id_{n_1} & 0 \\ 0 & t^{dSS_1} \end{bmatrix} = \begin{bmatrix} t^{S((d-d_1)\|\vec{s}\|_1 -d\widehat{S}_1 )}F_1 & t^{S(dS_1+(d-d_1)\|\vec{s}\|_1)}F_2 \\ 0 & t^{S(dS_1 - d_1\|\vec{s} \|_1 )} F_3 \end{bmatrix}.
	\]
	But now \eqref{eq:unstable inequality v2} implies that $dS_1 - d_1 \|\vec{s}\|_1 > 0$ and
	\[
		(d-d_1)\|\vec{s}\|_1 -d\widehat{S}_1 = (d-d_1)\|\vec{s}\|_1 -d(\|\vec{s}\|_1 - S_1) = dS_1 - d_1 \|\vec{s}\|_1 > 0,
	\]
	so we conclude that 
	\[
		\lim_{t \to 0} \lambda(t) \cdot F = 0,
	\]
	and hence $F$ does not have property $\Sr$ by \Cref{thm:hilbert-mumford}.
\end{proof}

\begin{cor}\label{cor:non-min cps don't have S}
	The non-minimizing critical points of $\potential{}$ do not have property $\Sr$.
\end{cor}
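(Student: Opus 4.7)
The plan is to combine the three preceding results (\Cref{prop:critical points}, \Cref{lem:non-minimizing critical points overweight some subspace}, and \Cref{prop:orthodecomposable unstable}) into a case analysis driven by the classification of critical points. Since the global minima of $\potential{}$ coincide with $\PF_d(\vec r)$ by \Cref{cor:global minima}, a non-minimizing critical point falls into either case~\ref{it:blockwiseTF} of \Cref{prop:critical points} (a blockwise tight frame with $\frac{1}{r_i}-\frac{\|f_i\|^2}{r_i^2}$ constant on each block, which is not in $\PF_d(\vec r)$) or the ``union with zero vectors'' case. These two possibilities can be handled separately.

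First I would dispose of the easy case: if $F$ contains a zero column, then \Cref{prop:zero implies unstable} immediately gives that $F$ does not have property $\Sr$. This takes care of every critical point in the third bullet of \Cref{prop:critical points}, regardless of whether the non-zero part is Parseval or just blockwise tight.

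The remaining case is when $F$ is a blockwise tight frame with $\frac{1}{r_i}-\frac{\|f_i\|^2}{r_i^2}$ constant on each block, has no zero columns, and is not in $\PF_d(\vec r)$. Here I would invoke \Cref{lem:non-minimizing critical points overweight some subspace}: orthodecomposability of $F$ produces pairwise orthogonal subspaces $V_1,\ldots,V_k$ together with a partition $P_1,\ldots,P_k$ of $\{1,\ldots,n\}$ such that $\{f_i\}_{i \in P_j}$ spans $V_j$, and the lemma supplies an index $\ell$ with $\sum_{i \in P_\ell} r_i > \dim(V_\ell)$. Since the columns $\{f_i\}_{i \in P_\ell}$ all lie in the subspace $V_\ell$, the hypotheses of \Cref{prop:orthodecomposable unstable} are met, and we conclude that $F$ does not have property $\Sr$.

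There is no real obstacle here — the whole content of the corollary has been packaged into the preceding lemmas and propositions, and the corollary is just the assembly statement. The only care needed is to observe that the classification in \Cref{prop:critical points} is exhaustive, so that these two cases (zero columns present, or non-minimizing blockwise tight with no zeros) genuinely cover every non-minimizing critical point.
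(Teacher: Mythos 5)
Your proof is correct and follows essentially the same route as the paper: the same case split into critical points with a zero column (handled by \Cref{prop:zero implies unstable}) and non-minimizing blockwise tight critical points (handled by combining \Cref{lem:non-minimizing critical points overweight some subspace} with \Cref{prop:orthodecomposable unstable}). Nothing is missing; the extra remark about exhaustiveness of the classification in \Cref{prop:critical points} is a fine, if implicit in the paper, observation.
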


\begin{proof}
	If $F$ is a non-minimizing critical point of $\potential{}$, then it is not in $\PF_d(\vec{r})$, so it must either be blockwise tight or contain one or more zero vectors. If $F$ is blockwise tight, \Cref{lem:non-minimizing critical points overweight some subspace} and \Cref{prop:orthodecomposable unstable} imply that $F$ does not have property $\Sr$. On the other hand, if $F$ contains a zero vector, then \Cref{prop:zero implies unstable} implies it does not have property $\Sr$.
	
	Either way, we see that $F$ does not have property $\Sr$.
\end{proof}

\subsection{Completing the Proof}

Finally, we use the established fact that $\Sr$ satisfies \Cref{Sr conditions} to prove our main result.

\begin{proof}[Proof of \Cref{thm:main}]
	Suppose $F_0 \in \C^{d \times n}$ has full spark. Then, by \Cref{prop:full spark implies semistable}, $F_0$ has property $\Sr$. By \Cref{prop:flow preserves property}, this implies that $F_\infty \coloneqq \lim_{t \to \infty} \Gamma_{\vec{r}}(F_0,t)$ also has property $\Sr$. 
	
	Since $F_\infty$ is the limit point of the gradient flow, it must be a critical point; since it has property $\Sr$, \Cref{cor:non-min cps don't have S} implies that it must be a global minimizer, and hence $F_\infty \in \PF_d(\vec{r})$ by \Cref{cor:global minima}. 
	
	This proves \Cref{thm:main} when $\K = \C$. Since the gradient of $\potential{}$ at a real matrix is real, $\R^{d \times n} \subset \C^{d \times n}$ is invariant under the gradient flow, and the $\K = \R$ case follows.
\end{proof}

\section{Topological Applications}\label{sec:topology}

We now turn to some topological consequences of \Cref{thm:main}. The proof of that theorem showed that the semistable points in $\K^{d \times n}$ flow to the Parseval frames with prescribed frame vector norms; in other words, the gradient flow realizes a deformation retraction of the semistable points to $\PF_d(\vec{r})$ (see \Cref{lem:deformation_retract} below for a formal argument). Therefore, if we can show that the set of semistable points is highly connected, then $\PF_d(\vec{r})$ will be as well; this is the content of \Cref{thm:homotopy_groups}, which we prove as follows. The semistable points are the complement in $\K^{d \times n}$ of the variety of unstable points, so our strategy is to give a lower bound on the codimension of the unstable locus. The result then follows because the complement in a contractible set like $\K^{d \times n}$ of a subvariety of high codimension remains highly connected.

\subsection{Characterization of the Unstable Set}

We know from \Cref{prop:full spark implies semistable} that the unstable points are a subset of the matrices that do not have full spark. Unfortunately, the set of spark-deficient matrices is defined by a single equation, namely that the product of the determinants of all $d \times d$ minors is zero, so we generally expect this set to have codimension 1. 

However, there are many spark-deficient matrices which nonetheless have property $\Sr$; indeed, the locus of unstable points generally has codimension much bigger than 1. To see this, we need a more precise characterization of the unstable points. 

\begin{thm}\label{thm:general_unstable_condition}
    A matrix $F \in \C^{d \times n}$ fails to have property $\Sr$ if and only if there exists a subspace $V \subset \C^d$ and indices $i_1, \dots , i_k$ so that $f_{i_1}, \dots , f_{i_k} \in V$ and
    \[
        \sum_{j=1}^k r_{i_j} > \dim(V).
    \]
\end{thm}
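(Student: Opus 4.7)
The plan is to handle the ``if'' direction from results already established and the ``only if'' direction via the Hilbert--Mumford criterion, extracting the destabilizing subspace from a destabilizing one-parameter subgroup by a weighted Abel summation.

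The ``if'' direction should be quick: when $\dim V \geq 1$ it is exactly \Cref{prop:orthodecomposable unstable}, and when $V = \{0\}$ the hypothesis $f_{i_j} \in V$ together with $\sum r_{i_j} > 0$ forces a zero column of $F$, to which \Cref{prop:zero implies unstable} applies. So the real work is in the forward direction.

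Assume $F$ lacks property $\Sr$. Then \Cref{thm:hilbert-mumford} produces a one-parameter subgroup $\lambda(t) = (A(t), D(t))$ of $G$ with $\lim_{t \to 0} \lambda(t) \cdot F = 0$. After changing basis so that $A(t) = \diag(t^{u_1}, \dots, t^{u_d})$ with $u_1 \geq \dots \geq u_d$ and $\sum_i u_i = 0$, and writing $D(t) = \diag(t^{w_1}, \dots, t^{w_n})$ with $v_j := w_j S/s_j$, the vanishing of the limit becomes the strict inequality $u_i > v_j$ whenever $f_{ij} \neq 0$, while $\sum_j w_j = 0$ rewrites as $\sum_j v_j s_j = 0$. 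I would take the flag members $V_i := \operatorname{span}(e_1, \dots, e_i)$ as candidate subspaces, set $k(j) := \max\{i : f_{ij} \neq 0\}$ and $N_i := \sum_{j : k(j) \leq i} s_j$, and note that since $r_j = d\, s_j / \|\vec{s}\|_1$, the desired conclusion reduces to finding $i \in \{1, \dots, d-1\}$ with $N_i > i\|\vec{s}\|_1/d$. (Zero columns of $F$ are handled separately via $V = \{0\}$ and \Cref{prop:zero implies unstable}, so I may assume each $k(j)$ is well-defined.)

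The crux is the weighted Abel summation. From $0 = \sum_j v_j s_j < \sum_j u_{k(j)} s_j$ (strict because $u_{k(j)} > v_j$ and $s_j > 0$), I would group the right-hand side by common values of $k(j)$ and telescope to rewrite it as $u_d \|\vec{s}\|_1 + \sum_{i=1}^{d-1}(u_i - u_{i+1}) N_i$; then eliminating $u_d = -\frac{1}{d}\sum_{i=1}^{d-1} i(u_i - u_{i+1})$ (a consequence of $\sum_i u_i = 0$) converts the inequality to
\[
\sum_{i=1}^{d-1}(u_i - u_{i+1})\left(N_i - \tfrac{i \|\vec{s}\|_1}{d}\right) > 0.
\]
Since each factor $u_i - u_{i+1} \geq 0$, at least one summand must be strictly positive, producing the required $i$. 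I expect this reorganization to be the main obstacle; a small extra check is that not all $u_i$ coincide (hence vanish), which is ruled out because if they did, then $u_i > v_j$ would force $v_j < 0$ for every $j$, contradicting $\sum_j v_j s_j = 0$.
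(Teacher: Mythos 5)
Your proof is correct, and the forward direction takes a genuinely different route from the paper's. The paper, after reducing to the case of no zero columns and diagonalizing the destabilizing one-parameter subgroup, only extracts a block upper-triangular shape from the weight inequalities; it then block-diagonalizes by row operations, runs the negative gradient flow of $\potential{}$, and invokes \Cref{prop:flow preserves property}, \Cref{prop:no new zeros}, \Cref{prop:critical points}, and \Cref{lem:non-minimizing critical points overweight some subspace} to conclude that the limiting blockwise tight critical point overweights one of the two blocks. Your argument instead stays entirely inside the Hilbert--Mumford weight data: defining $k(j)$, $N_i$, and performing the Abel summation together with the elimination of $u_d$ via $\sum_i u_i = 0$ turns the destabilizing inequalities directly into $\sum_{i=1}^{d-1}(u_i-u_{i+1})\bigl(N_i - \tfrac{i\|\vec{s}\|_1}{d}\bigr) > 0$, which immediately produces a coordinate subspace $V_i$ of the flag with $\sum_{j:\,k(j)\le i} r_j > i$ (your identity $r_j = d\,s_j/\|\vec{s}\|_1$ and the handling of the all-$u_i$-equal case are both right). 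This is self-contained and purely combinatorial---it does not depend on any of the gradient-flow machinery---and it mirrors the standard GIT weight computations; the paper's route, by contrast, buys brevity by reusing results already proved for \Cref{thm:main} and ties the unstable locus to the flow dynamics. The only step you leave implicit, which the paper spells out at the end of its proof, is transporting the coordinate subspace back through the change of basis that diagonalized $A(t)$: since the columns of the original $F$ are obtained from those of the conjugated matrix by a fixed invertible map, they lie in an $i$-dimensional subspace with the same weight sum, so the conclusion is unaffected. The backward direction is handled identically in both proofs.
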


Notice that this is a generalization of \Cref{prop:zero implies unstable}, which covers the case when $\dim(V) = 0$.

\begin{proof}
    We already proved the backward direction in \Cref{prop:orthodecomposable unstable}. As for the forward direction, assume $F$ is unstable. If the $i$th column of $F$ is the zero vector, then that column lives in the trivial subspace $\{0\}$ and $r_i > 0 = \dim \{0\}$, so the result follows. 
	
	Hence, we may as well assume that all columns of $F$ are nonzero. Since $F$ is unstable, \Cref{thm:hilbert-mumford} implies there exists a one-parameter subgroup $\lambda: \C^\times \to G$ so that $\lim_{t \to 0} \lambda(t) \cdot F = 0$. We may as well assume the first term in $\lambda(t)$ has been diagonalized, so that
	\[
		\lambda(t) = \left(\diag\left(t^{u_1}, \dots , t^{u_d}\right), \diag \left(t^{w_1}, \dots , t^{w_n}\right)\right),
	\]
	where $\sum_{i=1}^d u_i = 0$ and $\sum_{j=1}^n w_j = 0$. Then
	\[
		\lambda(t) \cdot F = \diag\left(t^{u_1}, \dots , t^{u_d}\right) \begin{bmatrix} f_{ij} \end{bmatrix}_{i,j} \diag \left(t^{-w_1 S/s_1}, \dots , t^{-w_n S/s_n}\right) = \begin{bmatrix}t^{u_i - w_j S/s_j} f_{ij} \end{bmatrix}_{i,j},
	\]
	so $\lim_{t \to 0} \lambda(t) \cdot F = 0$ implies that 
	\begin{equation}\label{eq:unstable condition}
		u_i - w_j S/s_j > 0 \quad \text{for all $i,j$ with $f_{ij} \neq 0$}.
	\end{equation} 
	
	We now break this into two cases:

\medskip

\noindent {\bf Case 1.} If $u_1 = \dots = u_d = 0$, then \eqref{eq:unstable condition} implies that $w_j < 0$ whenever the $j$th column of $F$ is not the zero vector. Since $\sum_{j=1}^n w_j = 0$, not all of the $w_j$ can be negative, so $F$ must have at least one zero column. We've already proved the result in this case.

\medskip

\noindent {\bf Case 2.} On the other hand, suppose not all $u_i$ are zero. We can assume $u_1 \geq \dots \geq u_d$ and $w_1 \geq \dots \geq w_n$, so we know that $u_1 > 0 > u_d$. Let $k$ be the largest index so that $u_k > 0$ and let $\ell$ be the largest index so that $w_\ell \geq 0$. Then for $i > k$ and $j \leq \ell$, we have that $u_i - w_jS/s_j \leq 0$, and hence \eqref{eq:unstable condition} implies $f_{ij} = 0$.
	
	Therefore, $f_1, \dots , f_\ell \in \C^k$, where we think of $\C^k \subset \C^d$ as the span of the first $k$ coordinate directions. This means we can write $F$ in the block upper-triangular form 
	\[
		F = \begin{bmatrix} F_1 & F_2 \\ 0 & F_3 \end{bmatrix}.
	\]
	If some columns of $F_3$ are zero, then we can rearrange and absorb them into $F_1$, so we may as well assume $F$ is in this form with each column of $F_3$ non-zero. Since all columns of $F_3$ contains at least one non-zero entry, we can transform $F$ into a block-diagonal form
	\[
		\widetilde{F} = \begin{bmatrix} F_1 & 0 \\ 0 & \widetilde{F}_3 \end{bmatrix}
	\]
	using only elementary row operations with determinant 1 (i.e., adding a scalar multiple of one row to another). The row operations can be achieved by left-multiplication by a matrix in $\SL(d)$, so $\widetilde{F}$ is in the same $G$-orbit as $F$, and hence must also be unstable by \Cref{prop:semistable orbit}. Since all columns of $F$ are nonzero, all the columns of $\widetilde{F}$ are also nonzero.
	
	Since $\widetilde{F}^\ast \widetilde{F}$ is block-diagonal, we see from \Cref{prop:gradient} that $\nabla \potential{}(F)$ is also block-diagonal, so gradient flow will preserve orthodecomposability, and hence $\widetilde{F}_\infty \coloneqq \lim_{t \to \infty} \Gamma_{\vec{r}}(\widetilde{F},t)$ must be an orthodecomposable critical point of $\potential{}$ with the same block structure as $\widetilde{F}$. Since $\widetilde{F}$ is unstable, \Cref{prop:flow preserves property} implies that $\widetilde{F}_\infty$ must be as well. Since $\widetilde{F}$ has no zero columns, \Cref{prop:no new zeros} implies that $\widetilde{F}_\infty$ can't have any zero columns, either, and hence \Cref{prop:critical points} tells us that $\widetilde{F}_\infty$ must be a blockwise tight critical point which is not in $\PF_d(\vec{r})$. 
	
	But then \Cref{lem:non-minimizing critical points overweight some subspace} implies that either $\sum_{j=1}^\ell r_j > k$ or $\sum_{j=\ell+1}^n r_j > d-k$; either way, $\widetilde{F}$ satisfies the conclusion of the theorem since its first $\ell$ columns lie in a $k$-dimensional subspace and its last $n-\ell$ columns lie in a $(d-k)$-dimensional subspace. 
	
	Finally, the first $\ell$ columns of $F$ lie in a $k$-dimensional subspace and, since the columns of $F$ are related to the columns of $\widetilde{F}$ by an invertible linear transformation, the last $n-\ell$ columns of $F$ must lie in some $(d-k)$-dimensional subspace since the last $n-\ell$ columns of $\widetilde{F}$ do. Therefore, $F$ also satisfies the conclusion of the theorem. 
\end{proof}

\subsection{Codimension Bound}

Let $\vec{r} \in \mathbb{Q}_+^n$ be admissible. We use $\mathcal{U}(\vec{r})$ to denote the \emph{set of unstable frames}, as is characterized in \Cref{thm:general_unstable_condition}; that is,
\[
\mathcal{U}(\vec{r}) \coloneqq \left\{F \in \C^{d \times n} \mid \, \exists \, i_1,\ldots,i_k \, \mbox{ and } V \subset \C^d \, \mbox{s.t.} \, f_{i_1},\ldots,f_{i_k} \in V \, \mbox{ and } \sum_{j=1}^k r_{i_j} > \mathrm{dim}(V)\right\}.
\]
Our goal now is to get a lower bound on the codimension of  $\mathcal{U}(\vec{r})$.

Given $\ell \in \{0,\ldots, d-1\}$, choose an indexing set $I = \{i_i,\ldots,i_k\} \subset \{1,\ldots,n\}$ with the property that $\sum_{i \in I} r_i > \ell$. We note that, by admissibility, there is always at least one such indexing set: namely, $I=\{1,\ldots,n\}$ satisfies $\sum_{i \in I} r_i = d > \ell$. The collection of available indexing sets with this property depends, in general, on the choice of $\vec{r}$ and $\ell$. We then define
\[
\mathcal{U}_{\ell,I}(\vec{r}) \coloneqq \left\{F \in \C^{d \times n} \mid \, \exists \, V \subset \C^d \, \mbox{s.t.} \, f_{i_1},\ldots,f_{i_k} \in V \mbox{ and } \mathrm{dim}(V) = \ell \right\},
\]
so that 
\[
\mathcal{U}(\vec{r}) = \bigcup_{\ell, I} \mathcal{U}_{\ell,I}(\vec{r}),
\]
where the union is over all $\ell \in \{0,\ldots,d-1\}$ and all corresponding indexing sets. 

Each $\mathcal{U}_{\ell,I}(\vec{r})$ is an algebraic variety, and our first goal is to bound the dimension of these varieties. In particular, $\mathcal{U}_{\ell,I}(\vec{r})$ is essentially a product of a  \emph{determinantal variety}, whose dimensions are well known~\cite{kleimanGeometryDeformationSpecial1971,hochsterCohenMacaulayRings1971}, with a vector space of rectangular matrices. We derive explicit dimension estimates for the sake of completeness, focusing on ``worst case" upper bounds, depending on the parameters $\ell$ and $I$. In the following, mentions of \emph{dimension} refer to dimension over the field $\C$.

Fix $\ell \in \{0,\ldots,d-1\}$ and $I = \{i_1,\ldots,i_k\}$ with $\sum_{i \in I} r_i > \ell$. To build a frame $F$ in $\mathcal{U}_{\ell,I}(\vec{r})$, one could follow three steps:
\begin{enumerate}
    \item choose an $\ell$-dimensional subspace $V$ of $\C^d$;
    \item choose $k$ vectors $f_{i_1},\ldots,f_{i_k}$ from $V$;
    \item fill out the rest of the frame with $n-k$ vectors $f_j \in \C^d$, $j \not \in I$.
\end{enumerate}
The space of $\ell$-dimensional subspaces is the Grassmann manifold, which has dimension $\ell(d-\ell)$, the selection of a single vector $f_{i_j}$ from a given subspace $V$ is an $\ell$-dimensional choice, and each choice of vector to fill the rest of the frame is a $d$-dimensional choice. Adding the dimensions involved in the construction yields
\begin{align}
\mathrm{dim}(\mathcal{U}_{\ell,I}(\vec{r})) &= \ell(d-\ell) + k \cdot \ell + (n-k) \cdot d \nonumber \\
&= \ell \cdot d - \ell^2 + n \cdot d - k(d-\ell) \label{eqn:dimension_estimate1}.
\end{align}

The expression \eqref{eqn:dimension_estimate1} is intended to emphasize the fact that the dimension is decreasing in $k$. The worst case for the upper bound is realized when $k$ is as small as possible; that is, when the chosen weights are as large as possible, $r_{(1)},\ldots,r_{(k)}$ (recall this notational convention, introduced just prior to \Cref{def:admissibility}), which is the case where the indexing set $I$ consists of the indices of the first $k$ largest entries in $\vec{r}$. Let 
\begin{equation}\label{eqn:k_hat_definition}
k_\ell(\vec{r}) \coloneqq \min \left\{k \mid \sum_{i=1}^k r_{(i)} > \ell\right\}.
\end{equation}
Together with \eqref{eqn:dimension_estimate1}, this gives
\begin{equation}\label{eqn:dimension_estimate11}
\mathrm{dim}(\mathcal{U}_{\ell,I}(\vec{r})) \leq \ell \cdot d - \ell^2 + n \cdot d - k_\ell(\vec{r}) \cdot (d-\ell).
\end{equation}

Next, we introduce the ansatz that there exists a universal (i.e., independent of $\ell$) constant $c > \frac{d}{n}$ satisfying 
\begin{equation}\label{eqn:k_hat}
k_\ell(\vec{r}) \geq c \cdot \frac{n\ell}{d}, \qquad \forall \; \ell \in \{1,\ldots,d-1\}.
\end{equation}
Such a $c$ must exist. Indeed, by the admissibility condition, we must have $k_\ell(\vec{r}) > \ell$, as we would otherwise obtain the contradiction
\[
\ell < \sum_{i=1}^{k_\ell(\vec{r})} r_{(i)} \leq \sum_{i=1}^\ell r_{(i)} \leq \ell.
\]
It follows that (assuming $d > 1$)
\[
k_\ell(\vec{r}) \geq \ell + 1 = \frac{d (\ell + 1)}{n\ell}\frac{n\ell}{d} \geq  \frac{d^2}{n(d-1)}\frac{n\ell}{d},
\]
so that $c = \frac{d^2}{n(d-1)} > \frac{d}{n}$ has the desired properties. Depending on the choice of $\vec{r}$, $c$ could be chosen to be much larger, so we leave it as a parameter to be manipulated later on.

Incorporating \eqref{eqn:k_hat} into the dimension estimate \eqref{eqn:dimension_estimate2}, we have the following bound whenever $\ell > 0$:
\begin{align}
\mathrm{dim}(\mathcal{U}_{\ell,I}(\vec{r})) &\leq \ell \cdot d - \ell^2 + n \cdot d - c n \ell + c \frac{n}{d} \ell^2 \nonumber \\
&= \big(c \frac{n}{d} - 1\big) \ell^2 - (cn-d)\ell + nd. \label{eqn:dimension_estimate2}
\end{align}
The coefficient of $\ell^2$ is positive, by our assumption on $c$, so that the worst case upper bound occurs at the extremal points of the domain of $\ell \in \{1, \ldots, d-1\}$. At both of these points (i.e., $\ell =1$ and $\ell = d-1$), \eqref{eqn:dimension_estimate2} simplifies to 
\[
-(cn-d)\frac{d-1}{d} + nd.
\]
On the other hand, if $\ell = 0$, then $k_\ell(\vec{r})  = 1$, so that \eqref{eqn:dimension_estimate11} immediately simplifies to 
\[
\mathrm{dim}(\mathcal{U}_{0,I}(\vec{r})) \leq nd - d.
\]

Putting these dimension estimates together, we have proved the following result. In the following, we emphasize the field of coefficients by writing $\mathcal{U}_{\ell,I}^\C(\vec{r}) = \mathcal{U}_{\ell,I}(\vec{r})$, $\mathcal{U}^\C(\vec{r}) = \mathcal{U}(\vec{r})$ and $\mathrm{codim}_\C$ for codimension over $\C$. 

\begin{lem}\label{lem:codimension_bound}
    Every stratum $\mathcal{U}^\C_{\ell,I}(\vec{r})$ of the unstable set $\mathcal{U}^\C(\vec{r})$ satisfies the codimension bound 
\[
\mathrm{codim}_\C(\mathcal{U}^\C_{\ell,I}(\vec{r})) \geq \min \left\{d,(cn-d)\frac{d-1}{d}\right\},
\]
with $c$ as in the ansatz \eqref{eqn:k_hat}.
\end{lem}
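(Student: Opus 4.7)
The plan is to assemble a codimension bound by combining the parameter count sketched in the preceding paragraphs with a careful case split on $\ell$. Since $\C^{d \times n}$ has complex dimension $nd$, the codimension of each stratum is $nd - \dim_\C(\mathcal{U}^\C_{\ell, I}(\vec{r}))$, so it suffices to bound its dimension from above.

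The first step is to justify the parameter count \eqref{eqn:dimension_estimate1} as a genuine dimension bound on the algebraic variety $\mathcal{U}_{\ell,I}(\vec{r})$. The cleanest way is to exhibit it as the image of a regular map
\[
\Phi \colon \mathrm{Gr}(\ell,d) \times V^k \times (\C^d)^{n-k} \longrightarrow \C^{d\times n},
\]
where the second factor denotes the tautological bundle assigning to a point of $\mathrm{Gr}(\ell,d)$ the $k$-fold product of the corresponding subspace, and $\Phi$ sends a tuple to the matrix whose columns indexed by $I$ are the chosen vectors in $V$ and whose remaining columns are the free vectors in $\C^d$. Since the image of a morphism has dimension at most the dimension of the source, this gives $\dim_\C \mathcal{U}_{\ell,I}(\vec{r}) \leq \ell(d-\ell) + k\ell + (n-k)d$, which is the identity \eqref{eqn:dimension_estimate1}.

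The second step is to minimize this bound over the admissible indexing sets. Rewriting the estimate as $nd + \ell d - \ell^2 - k(d-\ell)$, we see the expression is strictly decreasing in $k$ (as $\ell \leq d-1$), so the worst case is obtained at the smallest permissible $k$, namely $k = k_\ell(\vec{r})$ from \eqref{eqn:k_hat_definition}. Substituting the ansatz \eqref{eqn:k_hat} then yields the quadratic-in-$\ell$ upper bound \eqref{eqn:dimension_estimate2}. Because $c > d/n$, the leading coefficient $cn/d - 1$ is strictly positive, so on the range $\ell \in \{1,\ldots,d-1\}$ the maximum occurs at an endpoint. A direct evaluation shows that $\ell = 1$ and $\ell = d-1$ both give the value $nd - (cn-d)\tfrac{d-1}{d}$, hence a codimension of at least $(cn-d)\tfrac{d-1}{d}$ across this range.

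The remaining case is $\ell = 0$, where $V = \{0\}$ and any single index $i$ with $r_i > 0$ qualifies, so $k_0(\vec{r}) = 1$. In this case the parameter count collapses to $(n-1)d$ (zero vectors at the chosen index and free vectors elsewhere), giving codimension at least $d$. Taking the minimum of the two regimes produces the stated bound $\min\{d, (cn-d)\tfrac{d-1}{d}\}$.

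The main obstacle I expect is the first step: rigorously translating the informal parameter count into a dimension bound that works uniformly over all the strata of the determinantal-type variety $\mathcal{U}_{\ell, I}(\vec{r})$. Once the map $\Phi$ is set up and shown to have $\mathcal{U}_{\ell,I}(\vec{r})$ as its image (with domain of the expected dimension via the Grassmannian and tautological bundle structure), the remainder is elementary quadratic optimization and case-checking at the boundary $\ell = 0$.
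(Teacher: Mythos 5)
Your proposal is correct and follows essentially the same route as the paper: the same parameter count \eqref{eqn:dimension_estimate1}, monotonicity in $k$ to reduce to $k_\ell(\vec{r})$, the convex quadratic in $\ell$ maximized at the endpoints $\ell=1$ and $\ell=d-1$, and the separate $\ell=0$ case giving codimension $d$. Your only addition is making the informal dimension count rigorous by exhibiting $\mathcal{U}_{\ell,I}(\vec{r})$ as the image of a morphism from a Grassmannian-bundle-type source, which is a reasonable (and slightly more careful) formalization of the same estimate.
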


The reason for emphasizing the field $\C$ above is that we would like to extend our results to frames over $\R$. Accordingly, let
\[
\mathcal{U}^\R(\vec{r}) \coloneqq \mathcal{U}^\C(\vec{r}) \cap \R^{d \times n},
\]
where $\R^{d \times n} \hookrightarrow \C^{d \times n}$ is included in the obvious way. It is straightforward to show that 
\[
\mathcal{U}^\R(\vec{r}) = \left\{F \in \R^{d \times n} \mid \, \exists \, i_1,\ldots,i_k \, \mbox{ and } V \subset \R^d \, \mbox{s.t.} \, f_{i_1},\ldots,f_{i_k} \in V \, \mbox{ and } \sum_{j=1}^k r_{i_j} > \mathrm{dim}_\R(V)\right\}.
\]
Letting $\mathcal{U}_{\ell,I}^\R(\vec{r})$ denote the associated strata, the codimension bound derivations above go through mutatis mutandis. Combining this observation with  \Cref{lem:codimension_bound} leads to the following codimension bounds over the reals (treating $\C^{d}$ and $\C^{d \times n}$ as real vector spaces).

\begin{lem}\label{lem:real_codimension_bound}
    The strata of the unstable sets satisfy the codimension bounds
\begin{equation}\label{eqn:codimension}
\mathrm{codim}_\R(\mathcal{U}^\C_{\ell,I}(\vec{r})) \geq 2 \cdot \min \left\{d,(cn-d)\frac{d-1}{d}\right\},
\end{equation}
and 
\begin{equation}\label{eqn:codimension_real}
\mathrm{codim}_\R(\mathcal{U}^\R_{\ell,I}(\vec{r})) \geq \min \left\{d,(cn-d)\frac{d-1}{d}\right\},
\end{equation}
with $c$ as in \eqref{eqn:k_hat}.
\end{lem}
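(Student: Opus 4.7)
The plan is to establish the two codimension bounds separately, noting that one follows immediately from \Cref{lem:codimension_bound} and the other by repeating the dimension count verbatim over $\R$.

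For the bound on $\mathcal{U}^\C_{\ell,I}(\vec{r})$, I would simply note that $\mathcal{U}^\C_{\ell,I}(\vec{r})$ is a complex algebraic subvariety of $\C^{d \times n}$, so any irreducible component of complex dimension $m$ has real dimension $2m$ when viewed inside the real $2dn$-dimensional vector space $\C^{d \times n}$. Hence $\mathrm{codim}_\R(\mathcal{U}^\C_{\ell,I}(\vec{r})) = 2 \cdot \mathrm{codim}_\C(\mathcal{U}^\C_{\ell,I}(\vec{r}))$, and \eqref{eqn:codimension} follows directly from the bound in \Cref{lem:codimension_bound}.

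For the bound on $\mathcal{U}^\R_{\ell,I}(\vec{r})$, I would repeat the three-step construction immediately preceding \Cref{lem:codimension_bound}, now over the reals. Namely, to build $F \in \mathcal{U}^\R_{\ell,I}(\vec{r})$, one (i) selects an $\ell$-dimensional real subspace $V \subset \R^d$, (ii) selects $k$ vectors $f_{i_1}, \dots, f_{i_k} \in V$, and (iii) fills in the remaining $n-k$ columns with arbitrary vectors of $\R^d$. Over $\R$, the real Grassmannian $\mathrm{Gr}_\R(\ell,d)$ still has dimension $\ell(d-\ell)$, each vector from $V$ is an $\ell$-dimensional choice, and each free column contributes $d$ dimensions. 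Summing gives $\mathrm{dim}_\R(\mathcal{U}^\R_{\ell,I}(\vec{r})) \leq \ell \cdot d - \ell^2 + n \cdot d - k(d-\ell)$, which is formally identical to \eqref{eqn:dimension_estimate1}. Since $\R^{d \times n}$ has real dimension $nd$ (rather than $2nd$), the rest of the derivation---minimizing over $I$ via $k_\ell(\vec{r})$, applying the ansatz \eqref{eqn:k_hat}, and separating the $\ell=0$ case---proceeds mutatis mutandis and yields \eqref{eqn:codimension_real}.

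There is no substantial obstacle here: the only thing to verify is that each ingredient of the dimension count (Grassmannian, vector choices in a subspace, free column choices) has the same formal dimension over $\R$ as over $\C$, so the algebraic manipulations culminating in \Cref{lem:codimension_bound} transfer without modification. The only point requiring even minor care is that $\mathcal{U}^\R_{\ell,I}(\vec{r})$ should be described in terms of real subspaces of $\R^d$; this is exactly the reformulation spelled out in the excerpt just before the lemma statement, which confirms that the parameterization above exhausts the real stratum.
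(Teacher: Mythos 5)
Your proposal is correct and follows essentially the same route as the paper: the complex bound comes from doubling the complex codimension of \Cref{lem:codimension_bound} when $\C^{d\times n}$ is viewed as a real vector space, and the real bound comes from repeating the three-step dimension count over $\R$, where every ingredient (Grassmannian, vectors in $V$, free columns) has the same formal dimension. The paper compresses this into a one-line ``mutatis mutandis'' remark; you have merely spelled out the same argument in more detail.
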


\begin{remark}\label{rmk:codimension_bounds_examples}
    Either possibility on the right hand side of \eqref{eqn:codimension_real} can realize the minimum, depending on the choice of $\vec{r}$. We now show this explicitly.

    Let $\vec{r} = (d/n,d/n,\ldots,d/n)$. Then we can take $c = 1$ in \eqref{eqn:k_hat}, and this choice is essentially optimal, so that 
    \[
    (cn-d)\frac{d-1}{d} = (n-d)\frac{d-1}{d} \geq d
    \]
    holds whenever 
    \[
    n \geq \frac{d(2d - 1)}{d-1}.
    \]
    In this case, the bound \eqref{eqn:codimension} becomes  $\mathrm{codim}_\R(\mathcal{U}^\R_{\ell,I}(\vec{r})) \geq d$. 

    On the other hand, let $r_1=\cdots = r_{d-1} = 1$, $r_d = 1-\epsilon$, $r_{d+1} = \cdots = r_n = \frac{\epsilon}{n-d}$, for some small $\epsilon$. Then $\vec{r}$ is admissible, and (assuming $d > 1$)
    \[
    c = \frac{d^2}{n(d-1)}
    \]
    is an essentially optimal choice of constant in \eqref{eqn:k_hat} (see the discussion following \eqref{eqn:k_hat}). In this case,
    \[
    (cn-d)\frac{d-1}{d} = \left(\frac{d^2}{n(d-1)}n - d \right)\frac{d-1}{d} = 1 \leq d,
    \]
    so that \eqref{eqn:codimension} becomes $\mathrm{codim}_\R(\mathcal{U}^\R_{\ell,I}(\vec{r})) \geq (cn-d)\frac{d-1}{d}$.
\end{remark}

\subsection{Vanishing Homotopy Groups for \texorpdfstring{$\PF_d(\vec{r})$}{PFd(r)}}

We're now ready to state and prove our result on the topology of the $\PF_d(\vec{r})$ spaces, which is stated in terms of its homotopy groups $\pi_q(\PF_d(\vec{r}))$. In particular, we give conditions which imply that this is the trivial group---if this is the case, we say that $\PF_d(\vec{r})$ is \emph{$q$-connected}. As special cases, $0$-connected is the same as being connected in the usual sense from point-set topology (which is the same as being path connected for algebraic varieties such as the frame spaces), and $1$-connected is typically referred to as being simply connected. 

In this subsection, we distinguish between Parseval frames over different fields by writing $\mathrm{PF}_d^\mathbb{K}(\vec{r})$ for the space of frames over the field $\mathbb{K} \in \{\mathbb{R},\mathbb{C}\}$. 

\begin{thm}\label{thm:homotopy_groups}
    Let $q$ be a non-negative integer, let $\vec{r} \in \mathbb{Q}^n_+$ be admissible, and let $c$ be a choice of constant satisfying \eqref{eqn:k_hat} for $\vec{r}$.
    \begin{enumerate}
        \item For real frames, suppose that 
    \[
    c \geq \frac{d}{n}\left(\frac{q+d+1}{d-1}\right) \quad \mbox{and} \quad d \geq q + 2.
    \]
    Then $\mathrm{PF}^\R_d(\vec{r})$ is $q$-connected.
    \item For complex frames, suppose that 
    \[
    c \geq \frac{d}{2n}\left(\frac{q+d}{d-1}\right) \quad \mbox{and} \quad d \geq \frac{q + 2}{2}.
    \]
    Then $\mathrm{PF}^\C_d(\vec{r})$ is $q$-connected.
    \end{enumerate}
\end{thm}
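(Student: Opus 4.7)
The plan is to combine \Cref{thm:main} with the codimension bounds of \Cref{lem:real_codimension_bound} via a standard codimension--connectivity principle. The crucial first step is that the negative gradient flow of $\potential{}$ realizes the semi-stable locus $\mathcal{S}^\K(\vec{r}) \coloneqq \K^{d \times n} \setminus \mathcal{U}^\K(\vec{r})$ as deformation retracting onto $\PF_d^\K(\vec{r})$. Indeed, by \Cref{thm:general_unstable_condition} every point of $\mathcal{S}^\K(\vec{r})$ has property $\Sr$, and the analysis of \Cref{sec:proof_of_main_theorem} then shows its gradient-flow trajectory stays in $\mathcal{S}^\K(\vec{r})$ throughout and converges to an element of $\PF_d^\K(\vec{r})$. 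Combined with the continuity promised by the forthcoming \Cref{lem:deformation_retract}, this yields a homotopy equivalence $\PF_d^\K(\vec{r}) \simeq \mathcal{S}^\K(\vec{r})$, reducing the question to computing $\pi_q(\mathcal{S}^\K(\vec{r}))$.

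Next, I would invoke the standard topological fact that removing a closed subset $Y$ of real codimension at least $c$ from a contractible smooth manifold $M$ leaves a $(c-2)$-connected space. This is a transversality argument: any map $S^q \to M \setminus Y$ extends to a ball $B^{q+1} \to M$ by contractibility, and for $q + 1 < c$ one perturbs the ball off $Y$ by general position. Since $\K^{d \times n}$ is contractible, the task reduces to bounding from below the real codimension of $\mathcal{U}^\K(\vec{r})$ by $q + 2$. By \Cref{lem:real_codimension_bound}, each stratum $\mathcal{U}_{\ell,I}^\K(\vec{r})$ has real codimension at least $\min\{d, (cn-d)(d-1)/d\}$ over $\R$ and at least twice this over $\C$, and $\mathcal{U}^\K(\vec{r})$ is a finite union of such strata. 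Requiring each term in the minimum to exceed $q + 2$ and then rearranging to solve for $c$ and $d$ reproduces exactly the stated arithmetic conditions: in the real case, $d \geq q+2$ and $c \geq \frac{d}{n} \cdot \frac{q+d+1}{d-1}$, and analogously in the complex case with an extra factor of $2$ accounting for the passage from complex to real codimension.

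The principal obstacle in this plan is the continuity of the limit-of-flow map needed in Step 1: upgrading the pointwise convergence of \Cref{thm:main} to a genuine deformation retraction requires that the assignment $F_0 \mapsto F_\infty$ depend continuously on $F_0$ as $F_0$ ranges over $\mathcal{S}^\K(\vec{r})$. The real-analyticity of $\potential{}$, together with a \L{}ojasiewicz-type gradient inequality (already implicit in ensuring that flow limits exist), should yield uniform control on the flow near its limit and hence continuity; this is presumably the content of \Cref{lem:deformation_retract}. Modulo this technical point, the remainder of the argument is a routine application of general position combined with the explicit codimension bounds already in hand.
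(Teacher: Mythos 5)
Your proposal is correct and takes essentially the same route as the paper: gradient flow plus \Cref{lem:deformation_retract} (whose continuity the paper likewise delegates to the \L{}ojasiewicz/Duistermaat--Lerman analytic argument) gives the homotopy equivalence $\PF_d^\K(\vec{r}) \simeq \mathcal{S}^\K(\vec{r})$, and then the codimension bounds of \Cref{lem:real_codimension_bound} combined with the standard general-position principle (which the paper cites rather than reproves) yield the vanishing of $\pi_q$. One small caveat: your claim that the rearrangement ``reproduces exactly'' the stated complex-case constant is not quite right---requiring $2(cn-d)\frac{d-1}{d} \geq q+2$ gives $c \geq \frac{d}{2n}\cdot\frac{q+2d}{d-1}$, which is slightly stronger than the stated $\frac{d}{2n}\cdot\frac{q+d}{d-1}$, a discrepancy the paper itself passes over by saying only that ``the complex case is similar.''
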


Let $\mathcal{S}^\mathbb{C}(\vec{r}) \subset \C^{d \times n}$ denote the \emph{semistable set}, i.e., the complement of the unstable set $\mathcal{U}^\C(\vec{r})$ that was discussed in the previous subsections, 
\[
\mathcal{S}^\mathbb{C}(\vec{r}) \coloneqq \mathbb{C}^{d \times n} \setminus \mathcal{U}^\C(\vec{r}).
\] 
Equivalently, $\mathcal{S}^\mathbb{C}(\vec{r})$ is the set of frames having property $\Sr$. Let $\mathcal{S}^\mathbb{R}(\vec{r})$ denote the subset of the semistable set consisting of real matrices, i.e.,
\[
\mathcal{S}^\mathbb{R}(\vec{r}) \coloneqq \R^{d \times n} \setminus \mathcal{U}^\R(\vec{r}).
\]

The proof of \Cref{thm:homotopy_groups} relies on the following lemma. 

\begin{lem}\label{lem:deformation_retract}
    Gradient descent of the total frame energy $\potential{}$ (i.e., the flow of the differential equation in \Cref{thm:main}) defines a strong deformation retract from $\mathcal{S}^\mathbb{K}(\vec{r})$ onto the space of Parseval frames $\mathrm{PF}_d^\mathbb{K}(\vec{r})$. In particular, $\mathcal{S}^\mathbb{K}(\vec{r})$ and $\mathrm{PF}_d^\mathbb{K}(\vec{r})$ are homotopy equivalent.
\end{lem}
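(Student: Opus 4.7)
The plan is to build the strong deformation retract $H \colon \mathcal{S}^\K(\vec{r}) \times [0,1] \to \mathcal{S}^\K(\vec{r})$ directly from the negative gradient flow $\Gamma_{\vec{r}}$ of \Cref{thm:main}. Reparametrize time by $s \mapsto \tan(\pi s/2)$ so that $t = \infty$ becomes $s = 1$, and set
\[
H(F,s) \coloneqq \begin{cases} \Gamma_{\vec{r}}(F, \tan(\pi s/2)) & \text{if } s \in [0,1), \\ \lim_{t \to \infty} \Gamma_{\vec{r}}(F, t) & \text{if } s = 1. \end{cases}
\]
First I would verify the algebraic properties of a strong deformation retract. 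That $H$ takes values in $\mathcal{S}^\K(\vec{r})$ is immediate from \Cref{prop:flow preserves property}, which guarantees the semistability of both finite-time points and the limit. The identity $H(F,0)=F$ is automatic, and \Cref{thm:main} gives $H(F,1) \in \mathrm{PF}_d^\K(\vec{r})$ for every $F \in \mathcal{S}^\K(\vec{r})$. Finally, $\mathrm{PF}_d^\K(\vec{r})$ consists of critical points of $\potential{}$ by \Cref{cor:global minima}, so $\Gamma_{\vec{r}}(F,t)=F$ for all $t \geq 0$ whenever $F \in \mathrm{PF}_d^\K(\vec{r})$, yielding $H(F,s)=F$ for all $s$.

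The main obstacle—and essentially the only substantive step—is continuity of $H$. Continuity on $\mathcal{S}^\K(\vec{r}) \times [0,1)$ is standard smooth dependence of ODE solutions on initial conditions, noting that the quartic growth of $\potential{}$ makes its sublevel sets bounded, so orbits remain in compact sets and hence are defined for all $t \geq 0$. Continuity at $s=1$ reduces to showing that $F \mapsto \lim_{t \to \infty} \Gamma_{\vec{r}}(F, t)$ is continuous on $\mathcal{S}^\K(\vec{r})$ and that $\Gamma_{\vec{r}}(F_k, t_k) \to \lim_{t \to \infty} \Gamma_{\vec{r}}(F, t)$ whenever $F_k \to F$ and $t_k \to \infty$. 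I would handle this by invoking the {\L}ojasiewicz gradient inequality for the real polynomial $\potential{}$: together with the Morse--Bott nondegeneracy of $\potential{}$ in directions normal to $\mathrm{PF}_d^\K(\vec{r})$ (a structural feature inherited from $\potential{}$ being the squared norm of a momentum map, as noted in \Cref{rem:squared norm}), this produces a neighborhood of each point of $\mathrm{PF}_d^\K(\vec{r})$ on which the gradient flow converges uniformly. Since every semistable orbit is bounded with $\omega$-limit set inside $\mathrm{PF}_d^\K(\vec{r})$ by \Cref{thm:main}, each orbit eventually enters such a neighborhood; combining uniform convergence there with continuous dependence of the flow on the (arbitrarily large) preceding finite time interval yields joint continuity of $H$ at $s=1$.

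The real case follows by restriction. The gradient of $\potential{}$ at a real matrix is itself real, as observed at the end of the proof of \Cref{thm:main}, so $\R^{d \times n} \subset \C^{d \times n}$ is invariant under the gradient flow. Restricting the map $H$ to $\mathcal{S}^\R(\vec{r}) \times [0,1]$ therefore produces a strong deformation retract of $\mathcal{S}^\R(\vec{r})$ onto $\mathrm{PF}_d^\R(\vec{r})$, and the claimed homotopy equivalence in either field is an immediate consequence.
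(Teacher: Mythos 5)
Your overall strategy is the same as the paper's: the retraction is the time-reparametrized negative gradient flow, the algebraic properties follow from \Cref{thm:main}, \Cref{prop:flow preserves property}, and the fact that $\PF_d^\K(\vec{r})$ consists of zeros of $\potential{}$ (hence fixed points of the flow), the only substantive issue is joint continuity at $s=1$, and the real case follows because the flow preserves $\R^{d \times n}$. The paper disposes of the continuity step by citing the analytic (Łojasiewicz-type) arguments in Lerman's exposition of Duistermaat's theorem \cite{lermanGradientFlowNorm2005}, which is essentially the estimate you are gesturing at.

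The genuine gap is the extra ingredient you lean on for uniform convergence near the minimum set: the claimed Morse--Bott nondegeneracy of $\potential{}$ in the directions normal to $\PF_d^\K(\vec{r})$, asserted to be ``inherited'' from $\potential{}$ being the squared norm of a momentum map. This is false in general. Kirwan's theory only guarantees that such norm-squares are \emph{minimally degenerate}, not Morse--Bott, and in this specific setting the failure is concrete: for many admissible $\vec{r}$ (e.g., equal-norm frames with $d \mid n$) the set $\PF_d(\vec{r})$ contains orthodecomposable Parseval frames, at which the stabilizer of the $\unitary(d) \times \unitary(1)^n$-action jumps in dimension, the level set fails to be cut out transversally, and $\PF_d(\vec{r})$ is not even a smooth manifold there (cf.\ \cite{dykemaManifoldStructureSpaces2006}); a nondegenerate normal Hessian does not exist at such points. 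Since your neighborhood-wise uniform convergence is derived ``together with'' this claim, the continuity argument as written does not go through. It is repairable without Morse--Bott: the Łojasiewicz gradient inequality alone, applied near a point $p \in \PF_d^\K(\vec{r})$ where $\potential{}$ attains its minimum value $0$, bounds the remaining arc length of a trajectory by a constant times a positive power of its current energy, uniformly on a neighborhood of $p$; combined with the fact (from \Cref{thm:main}, via property $\Sr$) that every semistable trajectory eventually enters such a neighborhood, and with continuous dependence on initial conditions over the preceding finite time interval, this yields the joint continuity at $s=1$. That uniform arc-length estimate, not any normal nondegeneracy, is exactly the content of the argument on p.~124 of \cite{lermanGradientFlowNorm2005} that the paper invokes.
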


\begin{proof}
    The complex case follows from the main result of \cite{lermanGradientFlowNorm2005} (an exposition of unpublished work of Duistermaat), which characterizes the gradient descent dynamics of squared norms of momentum maps on symplectic manifolds. This applies to the total frame energy, according to \Cref{rem:squared norm}. However, the result follows in our special case by an essentially elementary argument: the fact that gradient descent defines a well-defined map from $\mathcal{S}^\mathbb{C}(\vec{r}) \times [0,\infty]$ onto $\mathrm{PF}_d^\mathbb{C}(\vec{r})$, which fixes $\mathrm{PF}_d^\mathbb{C}(\vec{r})$, follows from \Cref{thm:main}, and the continuity of this map then follows from the analytical arguments in \cite{lermanGradientFlowNorm2005} (see page 124 therein), which do not require any machinery from symplectic geometry. The real case follows by considering $\R^{d \times n} \hookrightarrow \C^{d \times n}$ and observing that the gradient flow preserves this subspace.
\end{proof}

\begin{proof}[Proof of \Cref{thm:homotopy_groups}]
    Consider the real case of the theorem.  As homotopy groups are invariant under homotopy equivalence, \Cref{lem:deformation_retract} tells us that it suffices to prove that the assumptions on $c$, $d$ and $q$ imply triviality of the homotopy groups of $\mathcal{S}^\mathbb{R}(\vec{r})$. We have that $\mathcal{U}^\R(\vec{r})$ is a union of algebraic varieties with codimensions bounded by the minimum of $d$ and $(cn-d)\frac{d-1}{d}$. Our assumptions on $c$ and $d$ imply that the minimum is bounded below by $q+2$. By a general transversality argument (see \cite[Lemma 3.12]{needhamGeometricApproachesMatrix2024} for details), it follows that, for $d \geq q+2$, 
    \[
    \pi_q(\mathcal{S}^\mathbb{R}(\vec{r})) = \pi_q(\R^{d \times n}) = 0.
    \]
    The proof in the complex case is similar.
\end{proof}

\subsection{Consequences} 

While \Cref{thm:homotopy_groups} gives precise conditions for vanishing homotopy groups, the conditions themselves are not immediately easy to interpret. We now provide some direct corollaries of this result with more straightforward sufficient conditions. 

We first show that spaces of equal norm Parseval frames can be forced to have trivial topology in any dimension by taking a large number of frame vectors.

\begin{cor}\label{cor:vanishing_topology}
    Fix a field $\mathbb{K} \in \{\R,\C\}$, a positive integer $q$ and a dimension $d$ with
    \[
    d \geq \left\{\begin{array}{rl}
    q+2 & \mbox{if } \mathbb{K} = \R, \\
    \frac{q+2}{2} & \mbox{if } \mathbb{K} = \C.
    \end{array}\right.
    \]
    The space of equal norm Parseval frames with $n$ vectors in $\mathbb{K}^d$ is $q$-connected for all sufficiently large $n$. 
\end{cor}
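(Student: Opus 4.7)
The plan is to apply \Cref{thm:homotopy_groups} directly to the equal norm case $\vec{r} = (d/n, d/n, \ldots, d/n)$. Since any Parseval frame $F$ satisfies $\sum_i \|f_i\|^2 = \tr(FF^\ast) = d$, the equal norm Parseval frames with $n$ vectors in $\K^d$ are precisely the elements of $\PF_d^{\mathbb{K}}(\vec{r})$ for this choice of $\vec{r}$. This $\vec{r}$ is clearly rational, and it is admissible whenever $n \geq d$: the total sum equals $d$ by construction, and the partial sum bound $\sum_{i=1}^k r_{(i)} = kd/n \leq k$ follows immediately from $d \leq n$. So the task reduces to verifying the hypotheses of \Cref{thm:homotopy_groups} for large $n$.

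The next step is to identify a workable constant $c$ in the ansatz \eqref{eqn:k_hat}. In the equal norm case, for any $\ell \in \{1, \ldots, d-1\}$,
\[
k_\ell(\vec{r}) = \min\{k \in \mathbb{N} \mid k \cdot d/n > \ell\} = \lfloor \ell n/d \rfloor + 1 > \ell n/d,
\]
so $k_\ell(\vec{r}) \geq (n\ell)/d$ for every admissible $\ell$. Thus $c = 1$ satisfies the ansatz (and indeed $c = 1 > d/n$ whenever $n > d$).

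With $c = 1$ in hand, the real case inequality from \Cref{thm:homotopy_groups}, namely $c \geq \tfrac{d}{n} \cdot \tfrac{q+d+1}{d-1}$, rearranges to the explicit bound
\[
n \geq \frac{d(q+d+1)}{d-1},
\]
which holds for all sufficiently large $n$. The complementary dimension condition $d \geq q+2$ is built into the hypothesis of the corollary. Invoking \Cref{thm:homotopy_groups} then yields $q$-connectedness of $\PF_d^{\R}(\vec{r})$. The complex case is entirely analogous: the same choice $c=1$ reduces the hypothesis to $n \geq d(q+d)/(2(d-1))$, again satisfied for large $n$, and $d \geq (q+2)/2$ is assumed.

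I do not anticipate any real obstacle here: the argument is a direct specialization of \Cref{thm:homotopy_groups} to uniform weights, and the only computation required is the identification of $k_\ell(\vec{r})$ and the resulting validity of $c=1$ in the ansatz, both of which are elementary.
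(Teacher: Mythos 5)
Your proposal is correct and follows essentially the same route as the paper: the paper's proof likewise takes $c=1$ for the uniform vector $\vec{r}=(d/n,\ldots,d/n)$ (as noted in \Cref{rmk:codimension_bounds_examples}) and then observes that the hypotheses of \Cref{thm:homotopy_groups} hold for $n$ sufficiently large. Your explicit verification of admissibility, the computation of $k_\ell(\vec{r})$, and the explicit thresholds $n \geq \frac{d(q+d+1)}{d-1}$ (real) and $n \geq \frac{d(q+d)}{2(d-1)}$ (complex) simply fill in details the paper leaves implicit.
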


\begin{proof}
    As was observed in \Cref{rmk:codimension_bounds_examples}, we can take the constant $c=1$ in \eqref{eqn:k_hat}, so that the sufficient conditions in \Cref{thm:homotopy_groups} hold by taking $n$ large enough.
\end{proof}

Next, we recover a known result from our previous paper~\cite{needhamSymplecticGeometryConnectivity2021}. We showed in the main theorem of \cite{needhamSymplecticGeometryConnectivity2021} that the space of $n$-frames in $\mathbb{C}^d$ with prescribed frame operator $S$ and admissible vector of squared vector norms $\vec{r}$ is path connected, thereby generalizing the Frame Homotopy Theorem of \cite{cahillConnectivityIrreducibilityAlgebraic2017}. \Cref{thm:homotopy_groups} provides an alternative proof of the special case where $S$ is the identity matrix, as we state in the following corollary.

\begin{cor}\label{cor:complex frame homotopy}
    The space $\mathrm{PF}_d^\C(\vec{r})$ is path-connected for any admissible $\vec{r} \in \Q_+^n$.
\end{cor}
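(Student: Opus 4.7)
The approach I would take is to treat this as a direct specialization of the complex case of \Cref{thm:homotopy_groups} at $q = 0$, since path-connectedness coincides with $0$-connectedness for the algebraic varieties under consideration.

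First, I would dispose of the degenerate dimension $d = 1$ separately. Here admissibility collapses to $\sum_{i=1}^n r_i = 1$ with no further inequality constraints, and a complex Parseval frame with $\|f_i\|^2 = r_i$ is simply an $n$-tuple $(f_1,\ldots,f_n) \in \C^n$ whose entries have prescribed moduli. The resulting space is a product of $n$ circles, which is visibly path-connected.

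For $d \geq 2$, I would verify the two hypotheses of the complex case of \Cref{thm:homotopy_groups} with $q = 0$. The condition $d \geq (q+2)/2 = 1$ is automatic. The remaining requirement is to exhibit a constant $c$ satisfying the ansatz \eqref{eqn:k_hat} with $c \geq \frac{d}{2n}\cdot\frac{d}{d-1} = \frac{d^2}{2n(d-1)}$. But the discussion immediately following \eqref{eqn:k_hat} established that $c = \frac{d^2}{n(d-1)}$ is always an admissible choice whenever $d > 1$, and this is exactly twice the lower bound demanded by \Cref{thm:homotopy_groups}(2) at $q=0$. Therefore the hypotheses are satisfied and $\mathrm{PF}_d^\C(\vec{r})$ is $0$-connected, hence path-connected.

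There is no substantial obstacle here. All of the technical work has already been done in establishing the codimension bounds in \Cref{lem:codimension_bound} and \Cref{lem:real_codimension_bound} and in proving \Cref{thm:homotopy_groups}; the corollary merely unpacks the statement for $q = 0$ and observes that the universal ansatz bound $c = d^2/(n(d-1))$ is automatically strong enough in the complex case. The point of presenting the corollary is to emphasize that the Frame Homotopy Theorem of~\cite{cahillConnectivityIrreducibilityAlgebraic2017}, together with its generalization to admissible prescribed norms from~\cite{needhamSymplecticGeometryConnectivity2021}, falls out of the GIT-based framework developed here essentially for free.
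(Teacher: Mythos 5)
Your proposal is correct and follows essentially the same route as the paper: specialize \Cref{thm:homotopy_groups} at $q=0$, noting that $0$-connectedness gives path-connectedness here, and supply a constant $c$ satisfying the ansatz \eqref{eqn:k_hat} via the bound $k_\ell(\vec{r}) > \ell$ forced by admissibility. The only (immaterial) difference is the choice of constant—you reuse the universal $c = \frac{d^2}{n(d-1)}$ from the discussion after \eqref{eqn:k_hat}, while the paper takes $c = \frac{d}{2n}\cdot\frac{d}{d-1}$ and checks the ansatz for it directly—and your explicit treatment of $d=1$ as a product of circles fills in what the paper dismisses as trivial.
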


\begin{proof}
    Assume $d > 1$, as the claim is otherwise trivial. Let $c = \frac{d}{2n} \cdot \frac{d}{d-1}$. For any $\ell \in \{1,\ldots,d-1\}$, we have 
    \[
    c \cdot \frac{n \ell}{d} = \frac{d}{2(d-1)} \ell \leq \ell.
    \]
    Since $\vec{r}$ is admissible, the quantity $k_\ell$ of \eqref{eqn:k_hat_definition} satisfies $k_\ell > \ell$, as was observed in the argument following \eqref{eqn:k_hat}. Putting this together with the estimate above, we have
    \[
    k_\ell > c \cdot \frac{n \ell}{d},
    \]
    so that our choice of $c$ satisfies \eqref{eqn:k_hat}. 

    We now apply \Cref{thm:homotopy_groups} in the case $q=0$: we have $d \geq 1 = \frac{q + 2}{2}$ and 
    \[
    c = \frac{d}{2n} \cdot \frac{d}{d-1} = \frac{d}{2n} \cdot \frac{q + d}{d-1},
    \]
    so that the theorem implies $\pi_0(\mathrm{PF}_d^\C(\vec{r}))$ is trivial, i.e., $\mathrm{PF}_d^\C(\vec{r})$ is connected. Since this space is locally path-connected, this implies that it is path-connected. 
\end{proof}

Finally, we focus on the real case and give several novel examples of spaces $\mathrm{PF}_d^\R(\vec{r})$ which are path-connected. A complete characterization of the vectors $\vec{r}$ such that $\mathrm{PF}_d^\R(\vec{r})$ is path-connected remains an open question, and the existing results to this point are relatively sparse: 
\begin{itemize}
    \item Cahill, Mixon, and Strawn~\cite{cahillConnectivityIrreducibilityAlgebraic2017} showed that when $n \geq d+2 \geq 4$ the space of equal-norm Parseval frames is path-connected, resolving Larson's Frame Homotopy Conjecture;
    \item Mare~\cite{mareConnectivityPropertiesSchur2024} showed that $\mathrm{PF}_d^\R(\vec{r})$ is path-connected for certain $\vec{r}$ with entries repeating in special patterns;
    \item Kapovich and Millson~\cite{kapovichModuliSpacePolygons1995} showed that the space of planar polygons with edge lengths given by $\vec{r}$, which can be interpreted as the quotient $\mathrm{PF}_2^\R(\vec{r})/(\SO(2) \times \orthog(1)^n)$,\footnote{For more on the connection between frames and polygons, see Copenhaver et al.~\cite{copenhaverDiagramVectorsTight2014} or Han et al.~\cite[Chapter~4]{hanFramesUndergraduates2007}. These constructions seem to originate with an idea of Goyal, Kovačević, and Kelner~\cite[\S 2.2.1]{goyalQuantizedFrameExpansions2001}, though there is a sense in which all tight frames can be interpreted as polygons~\cite{flaschkaBendingFlowsSums2005}.} is disconnected if and only if there exist $i,j,k$ such that $r_i + r_j > 1$ and $r_j + r_k > 1$ and $r_k + r_i > 1$ (recall that $r_1 + \dots + r_n = 2$ for admissible $\vec{r}$).\footnote{More generally, the homology groups of the planar polygon spaces are known~\cite{farberHomologyPlanarPolygon2007}.} This gives conditions ensuring that some of the spaces $\mathrm{PF}_2^\R(\vec{r})$ are disconnected, since a space is disconnected whenever any quotient is, but the reverse implication does not follow since quotients of disconnected spaces by disconnected groups like $\SO(2) \times \orthog(1)^n$ can be connected.
\end{itemize}

The next result establishes the existence of several infinite families of vectors $\vec{r}$ such that the corresponding space of frames $\mathrm{PF}_d^\R(\vec{r})$ is path-connected.  

\begin{cor}\label{cor:real frame homotopy}
    Let $d \geq 2$ and let $\vec{r} \in \Q_+^n$ be an admissible vector, such that the constant $c$ from \eqref{eqn:k_hat} satisfies 
    \begin{equation}\label{eqn:strong_c_condition}
    c \geq \frac{d}{n}\left(\frac{2d}{d-1}\right).
    \end{equation}
    Then there exists $\epsilon > 0$ such that, for any admissible vector $\vec{s} \in \Q_+^n$ with $|s_i - r_i| < \epsilon$ for all $i$, the space $\operatorname{PF}_d^\R(\vec{s})$ is path-connected. 
\end{cor}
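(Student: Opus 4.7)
The plan is to invoke \Cref{thm:homotopy_groups} in the real case with $q=0$, directly applied to admissible vectors $\vec{s}$ sufficiently close to $\vec{r}$. The sufficient conditions there require $d \geq 2$ (which holds by assumption) and the existence of some constant $c_0$ with $c_0 \geq \frac{d}{n}\cdot\frac{d+1}{d-1}$ satisfying \eqref{eqn:k_hat} for $\vec{s}$; equivalently, $k_\ell(\vec{s}) \geq \frac{(d+1)\ell}{d-1}$ for all $\ell \in \{1, \ldots, d-1\}$. The hypothesis \eqref{eqn:strong_c_condition} on $\vec{r}$, combined with \eqref{eqn:k_hat}, yields $k_\ell(\vec{r}) \geq \frac{2d\ell}{d-1}$, so the task reduces to controlling how much $k_\ell$ can decrease under perturbation. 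The slack between the two thresholds is $\frac{2d\ell - (d+1)\ell}{d-1} = \ell \geq 1$, leaving exactly enough room to absorb a drop of $1$ per $\ell$.

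The key robustness claim is that $k_\ell(\vec{s}) \geq k_\ell(\vec{r}) - 1$ for every $\ell \in \{1, \ldots, d-1\}$, provided $\vec{s}$ is sufficiently close to $\vec{r}$ componentwise. Let $\Sigma_k(\vec{r}) \coloneqq \sum_{i=1}^k r_{(i)}$ denote the partial sum of the top $k$ sorted entries; the map $\vec{r} \mapsto \Sigma_k(\vec{r})$ is continuous. Because every $r_i > 0$, the sequence $k \mapsto \Sigma_k(\vec{r})$ is strictly increasing, so at most one index can satisfy $\Sigma_k(\vec{r}) = \ell$ exactly. Consequently $\Sigma_{k_\ell(\vec{r}) - 2}(\vec{r}) < \ell$ strictly, using admissibility to guarantee $k_\ell(\vec{r}) \geq \ell + 1 \geq 2$ (as in the discussion following \eqref{eqn:k_hat}, and interpreting an empty sum as $0$). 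By continuity, $\Sigma_{k_\ell(\vec{r}) - 2}(\vec{s}) < \ell$ for $\vec{s}$ sufficiently close to $\vec{r}$, which forces $k_\ell(\vec{s}) > k_\ell(\vec{r}) - 2$, i.e., $k_\ell(\vec{s}) \geq k_\ell(\vec{r}) - 1$. Taking the minimum over the finitely many thresholds required for $\ell \in \{1, \ldots, d-1\}$ yields a single uniform $\epsilon$.

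With robustness in hand, for any admissible $\vec{s}$ with $|s_i - r_i| < \epsilon$ and any $\ell \in \{1, \ldots, d-1\}$, I chain the inequalities
\[
k_\ell(\vec{s}) \geq k_\ell(\vec{r}) - 1 \geq \frac{2d\ell}{d-1} - 1 \geq \frac{(d+1)\ell}{d-1},
\]
where the last step reduces to $(d-1)\ell \geq d - 1$, which holds since $\ell \geq 1$. Thus $c_0 = \frac{d}{n}\cdot\frac{d+1}{d-1}$ satisfies \eqref{eqn:k_hat} for $\vec{s}$, and \Cref{thm:homotopy_groups} applies with $q=0$ to conclude that $\operatorname{PF}_d^\R(\vec{s})$ is $0$-connected. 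Local path-connectedness of the algebraic variety $\operatorname{PF}_d^\R(\vec{s})$ then upgrades this to path-connectedness, exactly as in the proof of \Cref{cor:complex frame homotopy}.

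I expect the main obstacle to be the robustness step. Its correctness rests on the interplay between the strict monotonicity of $\Sigma_k(\vec{r})$ — forced by the positivity of all entries of $\vec{r}$ — which bounds the drop in $k_\ell$ by exactly $1$, and the hypothesis $c \geq \frac{d}{n}\cdot\frac{2d}{d-1}$, which is calibrated to provide precisely this amount of cushion across all $\ell$. The apparent mismatch between the continuous constants $c$ and the integer-valued $k_\ell$ is the subtle point: one must avoid naively propagating continuity through $k_\ell$, and instead control the discrete quantity via slack in the underlying continuous partial sums.
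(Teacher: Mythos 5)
Your proposal is correct and follows essentially the same route as the paper: the same robustness lemma $k_\ell(\vec{s}) \geq k_\ell(\vec{r}) - 1$ for nearby admissible $\vec{s}$ (your continuity-plus-strict-monotonicity argument is just a cleaner packaging of the paper's $\epsilon$-contradiction), followed by applying \Cref{thm:homotopy_groups} with $q=0$ after absorbing the drop of $1$ into the constant. The only difference is bookkeeping: the paper takes $c' = c - \frac{d}{n}$, while you fix $c_0 = \frac{d}{n}\cdot\frac{d+1}{d-1}$ directly, which amounts to the same slack computation.
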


The condition \eqref{eqn:strong_c_condition} holds for equal norm Parseval frames (i.e., where $r_i = \frac{d}{n}$, in which case we can take $c=1$) whenever $n \geq \frac{2d^2}{d-1}$. So this result, in particular, implies that there is an open (with respect to the subspace topology on $\Q_+^n \subset \R^n$) set of admissible vectors around the constant vector so that the associated frame space is path-connected. 

\begin{proof}
    Let $\vec{s} \in \Q_+^n$ be admissible. We claim that, for each $\ell \in \{1,\ldots,d-1\}$, there exists $\epsilon_\ell > 0$ such that $|s_i - r_i| < \epsilon_\ell$, for all $i$, implies that 
    \[
    k_\ell(\vec{s}) \geq k_\ell(\vec{r}) - 1,
    \]
    with $k_\ell$ defined as in \eqref{eqn:k_hat_definition}. Indeed, supposing that $|s_i - r_i| < \epsilon_\ell$ for some $\epsilon_\ell > 0$, and that $k_\ell(\vec{s}) < k_\ell(\vec{r}) - 1$, leads to 
    \[
    \ell < \sum_{i=1}^{k_\ell(\vec{r}) - 2} s_{(i)} \leq \sum_{i=1}^{k_\ell(\vec{r}) - 2} r_{(i)} + (k_\ell(\vec{r})-2)\epsilon_\ell, 
    \]
    which implies 
    \[
    \ell - (k_\ell(\vec{r})-2)\epsilon_\ell < \sum_{i=1}^{k_\ell(\vec{r}) - 2} r_{(i)}.
    \]
    If such a vector $\vec{s}$ existed for every choice of $\epsilon_\ell > 0$, then we would deduce that 
    \[
    \ell \leq \sum_{i=1}^{k_\ell(\vec{r}) - 2} r_{(i)} < \sum_{i=1}^{k_\ell(\vec{r}) - 1} r_{(i)},
    \]
    which gives a contradiction to the minimality of $k_\ell(\vec{r})$, so that our claim is established. 

    Now let $\epsilon > 0$ such that $|s_i - r_i| < \epsilon$ for all $i$ implies that $k_\ell(\vec{s}) \geq k_\ell(\vec{r}) - 1$ holds for all $\ell \in \{1,\ldots,d-1\}$ (e.g., take $\epsilon$ to be the minimum of the $\epsilon_\ell$'s whose existence was demonstrated above). Setting 
    \[
    c' = c - \frac{d}{n}, 
    \]
    we have, for any $\ell \in \{1,\ldots,d-1\}$,
    \[
    c' \cdot \frac{n\ell}{d} = \left(c - \frac{d}{n}\right) \cdot \frac{n\ell}{d} \leq  \left(c - \frac{d}{n\ell}\right) \cdot \frac{n\ell}{d} = c \cdot \frac{n\ell}{d} - 1 \leq k_\ell(\vec{r}) - 1 \leq k_\ell(\vec{s}).
    \]
    Moreover, we have 
    \[
    c' = c - \frac{d}{n} \geq \frac{d}{n} \left(\frac{2d}{d-1}\right) - \frac{d}{n} = \frac{d}{n}\left(\frac{d+1}{d-1}\right).
    \]
    Therefore, $c'$ satisfies the ansatz \eqref{eqn:k_hat} for the vector $\vec{s}$, as well as the bound in Theorem \ref{thm:homotopy_groups} which is sufficient for $\mathrm{PF}_d^\R(\vec{s})$ to be 0-connected, hence $\mathrm{PF}_d^\R(\vec{s})$ is path-connected. 
\end{proof}

\section{Discussion}\label{sec:discussion}

An obvious avenue for improving \Cref{thm:main} would be to show that the conclusion holds for all admissible $\vec{r} \in \R_+^n$, not just the rational ones. This will require a completely different method of proof, though, since the $G$-action we've defined here is not algebraic when $\vec{r}$ is not rational. That said, it seems likely that \Cref{cor:no spurious local mins} (our version of the Benedetto–Fickus theorem) can be proved for irrational $\vec{r}$ using approaches similar to those employed in Sections~2 and~3 of~\cite{mixonThreeProofsBenedetto2023}.

Another way of generalizing \Cref{thm:main} would be to characterize the limiting behavior of the negative gradient flow of $\potential{}$ when $\vec{r}$ is \emph{not} admissible. The dependence on $\vec{r}$ seems to be non-trivial, and we have not been able to formulate a coherent conjecture. 

In our definition of $\potential{}$, we could have replaced $\Id_d$ in the term $\|FF^\ast - \Id_d\|_{\text{Fr}}^2$ by any desired positive-definite matrix $S$. Under appropriate admissibility assumptions, does the negative gradient flow of such a modified energy function limit to frames $F$ with frame operator $F F^\ast = S$ and squared frame vector norms $\|f_i\|^2 = r_i$ for $i=1, \dots , n$? The answer seems to be yes in small-scale experiments, but there are some technical challenges in extending our proof to this case.

As with the original Frame Homotopy Conjecture, \Cref{cor:complex frame homotopy,cor:real frame homotopy} are potentially useful in that they imply that there is a way of continuously interpolating between any two Parseval frames with the same frame vector norms. With an eye to \Cref{thm:homotopy_groups} and \Cref{cor:vanishing_topology}, are there any practical consequences of $q$-connectedness of $\PF_d^\K(\vec{r})$ when $q > 0$? For example, simple connectedness implies that if we have two different interpolations between Parseval frames, we can continuously interpolate between these interpolations. Also, simple connectedness implies that all closed 1-forms are exact, so line integrals involving closed 1-forms are automatically path-independent.

\subsection*{Acknowledgments} 

This paper had its origins in conversations that took place at the Oberwolfach Mini-Workshop on Algebraic, Geometric, and Combinatorial Methods in Frame Theory in October, 2018, so we would like to thank the organizers, the Mathematisches Forschungsinstitut Oberwolfach, and especially Martin Ehler, Milena Hering, and Chris Manon, with whom we first outlined some of these ideas in~\cite{ehlerPaulsenProblemMade2018}. 

This work was supported by the National Science Foundation (DMS–2107808, Tom Needham; DMS–2107700, Clayton Shonkwiler).

\printbibliography

\end{document}